\newcommand{\RR}{\mathbb{R}}
\newcommand{\CC}{\mathbb{C}}
\newcommand{\FF}{\mathbb{F}}
\newcommand{\ulb}{{\textup{(}}}
\newcommand{\urb}{{\textup{)}}}
\newcommand{\ba}{A}
\newcommand{\na}{A}
\newcommand{\uniba}{B}
\newcommand{\bs}{X}
\newcommand{\ns}{X}
\newcommand{\rba}{{\pi}}
\newcommand{\leftreg}{{\ell}}
\newcommand{\leftcent}[1]{{\mathcal M}_{\leftreg}(#1)}
\newcommand{\norm}[1]{{\| #1\|}}
\newcommand{\abs}[1]{\left\vert #1\right\vert}
\newcommand{\unit}{1_{\uniba}}
\newcommand{\e}{e_\lambda}
\newcommand{\bl}{b(\lambda)}
\newcommand{\net}[1]{\{{#1}_\lambda\}_{\lambda\in \Lambda}}
\newcommand{\seqk}[1]{\{{#1}_k\}_{k=1}^{\infty}}
\newcommand{\seqko}[1]{\{{#1}_k\}_{k=0}^{\infty}}
\newcommand{\ai}{\net{e}}
\newcommand{\fai}{\{f(\e)\}_{\lambda\in \Lambda}}
\newcommand{\fe}{f(\e)}
\newcommand{\largestlambda}{\lambda_{\textup{la}}}
\newcommand{\elargest}{e_{\largestlambda}}
\newcommand{\bounded}{\mathcal B}
\newcommand{\inv}[1]{\mathrm{Inv}(#1)}
\newcommand{\pos}[1]{#1^+}
\newcommand{\Ell}{\mathrm{L}}
\newcommand{\obs}{X}
\newcommand{\oba}{A}
\newcommand{\unioba}{B}
\newcommand{\ess}[1]{#1_\mathrm{e}}
\newcommand{\onefunction}{\mathbf 1}
\newcommand{\ts}{\Omega}
\newcommand{\cont}[1]{\mathrm{C}(#1)}
\newcommand{\conto}[1]{\mathrm{C}_0(#1)}
\newcommand{\contc}[1]{\mathrm{C}_{\mathrm c}(#1)}
\newcommand{\contb}[1]{\mathrm{C}_{\mathrm b}(#1)}
\newcommand{\boundedop}[1]{\bounded(#1)}
\theoremstyle{plain}
\newtheorem{theorem}{Theorem}[section]
\newtheorem{proposition}[theorem]{Proposition}
\newtheorem{lemma}[theorem]{Lemma}
\newtheorem{corollary}[theorem]{Corollary}
\newtheorem*{lemma*}{}
\theoremstyle{definition}
\newtheorem{example}[theorem]{Example}
\newtheorem{remark}[theorem]{Remark}
\numberwithin{equation}{section}
\begin{document}

\bibliographystyle{plain}


\title [Simultaneous power factorization]{Simultaneous power factorization in modules over Banach algebras}

\author{Marcel de Jeu}
\address{Marcel de Jeu, Mathematical Institute, Leiden University, P.O.\ Box 9512, 2300 RA Leiden, The Netherlands}
\email{mdejeu@math.leidenuniv.nl}

\author{Xingni Jiang}
\address{Xingni Jiang, Mathematical Institute, Leiden University, P.O.\ Box 9512, 2300 RA Leiden, The Netherlands}
\email{xingnijiang@gmail.com}

\keywords{Banach module, simultaneous power factorization, positive factorization}

\subjclass[2010]{Primary 46H25; Secondary 46B40, 46B42}


\begin{abstract}
Let $A$ be a Banach algebra with a bounded left approximate identity $\{e_\lambda\}_{\lambda\in\Lambda}$, let $\pi$ be a continuous representation of $A$ on a Banach space $X$, and let $S$ be a non-empty subset of $X$ such that $\lim_{\lambda}\pi(e_\lambda)s=s$ uniformly on $S$. If $S$ is bounded, or if $\{e_\lambda\}_{\lambda\in\Lambda}$ is  commutative, then we show that there exist $a\in A$ and maps $x_n: S\to X$ for $n\geq 1$ such that $s=\pi(a^n)x_n(s)$ for all $n\geq 1$ and $s\in S$. The properties of $a\in A$ and the maps $x_n$, as produced by the constructive proof,  are studied in some detail. The results generalize previous simultaneous factorization theorems as well as Allan and Sinclair's power factorization theorem.
In an ordered context, we also consider the existence of a positive factorization for a subset of the positive cone of an ordered Banach space that is a positive module over an ordered Banach algebra with a positive bounded left approximate identity. Such  factorizations are not always possible. In certain cases, including those for positive modules over ordered Banach algebras of bounded functions, such positive factorizations exist, but the general picture is still unclear. Furthermore, simultaneous pointwise power factorizations for sets of bounded maps with values in a Banach module (such as sets of bounded convergent nets) are obtained. A worked example for the left regular representation of $\mathrm{C}_0({\mathbb R})$ and unbounded $S$ is included.
\end{abstract}

\maketitle


\section{Introduction and overview}\label{introduction}

Let $\ba$ be a real or complex Banach algebra with a bounded left approximate identity $\ai$, and let $\rba$ be a continuous representation of $\ba$ on a Banach space $\bs$. Let $s \in \bs$, and let $\epsilon>0$. Using a notation that anticipates what is to come, the Cohen factorization theorem (see \cite{cohen1959} for the original case where $\bs=\ba$) states that, if $\lim_\lambda \rba(\e)s=s$, then there exist $a\in \ba$ and $x_1(s)\in\bs$ such that $s=\rba(a)x_1(s)$  and $\norm{s-x_1(s)}<\epsilon$.  This result has been generalized in two different directions. First of all, there are results that are concerned with simultaneously factoring all elements $s$ of a subset $S$ of $\bs$ that need not be a singleton. In that case, one wants to establish the existence of $a\in\ba$ and a map $x_1: S\to\bs$ such that  $s=\rba(a)x_1(s)$ for all $s\in S$, together with some additional properties of $a$ and  $x_1$. This is possible, for example, if $S$ is bounded and such that $\lim_{\lambda}\rba(\e)s=s$ uniformly on $S$; see e.g.\ \cite[Theorem~17.1]{doran}. According to \cite[p.~251]{doran}, this line of research started with Ovaert (see~\cite{Ovaert}) and Craw (see~\cite{Craw}). In \cite[Section~17]{doran}, such results are called `multiple factorizations', but we prefer `simultaneous factorizations'.  The term `multiple factorizations' is a bit ambiguous, since it could also be felt to be related to the second type of generalization of Cohen's original result, namely, the power factorization as established by Allan and Sinclair in \cite{Allan1976}. Here $S$ consists of one element $s$ such that $\lim_{\lambda}\rba(\e)s=s$ again, but this time the existence is established of $a\in\ba$ and, for all $n\geq 1$, an element $x_n(s)$ of $\bs$ such that $s=\rba(a^n)x_n(s)$, together with some additional (and now more sophisticated) properties of $a$ and the elements $x_n(s)$ for $n\geq 1$.

We refer to  \cite{Dalesbook}, \cite{doran}, \cite{Kisynski}, and \cite{Palmer_I} for a further description of the historical development concerning factorizations and additional references,  including those for actions of Fr\'echet algebras on Fr\'echet spaces.

In this paper, we combine these two types of generalizations. If $S\subset\bs$ is such that $\lim_{\lambda}\rba(\e)s=s$ uniformly on $S$, and if $S$ is bounded or $\ai$ is commutative, then, according to our main result Theorem~\ref{thm:simultaneous_power_factorization},  there exist $a\in \ba$ and maps $x_n:S\to\bs$ for $n\geq 1$ such that $s=\rba(a^n)x_n(s)$ for all $s\in S$ and $n\geq 1$.  We shall refer to this as a simultaneous power factorization. We are not aware of previous simultaneous factorization results where $S$ need not be bounded. It should be noted here, however, that it was already observed for pointwise power factorization in \cite[p.~32-33]{Allan1976} that the situation where $\ai$ is commutative is more manageable than the general case.

Theorem~4.4 also contains several additional properties of $a$ and the maps $x_n$ for $n\geq 1$. Special cases of some of these properties coincide with facts that are already known for the simultaneous non-power and the pointwise power factorizations as described above.

The proof of Theorem~\ref{thm:simultaneous_power_factorization} is an adaptation of the proof of \cite{Allan1976} from the case of one point to that of a set. This makes the proof, which in \cite{Allan1976} is already a considerably more sophisticated version of Cohen's original technically ingenious proof, perhaps still a little bit more demanding to verify. We have, therefore, tried to be rather precise in our argumentation.

We have also tried to distil some non-obvious information concerning the form of the resulting factorization. These facts are by-products of the proof, which is constructive. In view of the size of the current proof, and the resulting degree of (in)accessibility thereof, it is hardly attractive any more for a reader to inspect the proof, and decide whether a certain statement is implicitly also proven. We thought, therefore, that it would be better to do a precise bookkeeping along the way, and, in the end, include all these by-products in this one main result Theorem~\ref{thm:simultaneous_power_factorization}.

This paper is organized as follows.

Section~\ref{sec:conventions_and_notation} contains a few conventions and some notation.

Section~\ref{sec:uniform_convergence_on_subsets_of_Banach_modules} is concerned with the condition that $\lim_{\lambda}\rba(\e)s=s$ uniformly on $S$. Later on, we shall need to know how this behaves under various operations on $S$. We also investigate how this property depends on the choice of $\ai$, and we introduce a basic example, namely, the left regular representation of $\conto{\RR}$, that will be the subject of Section~\ref{sec:worked_example}. Results such as Proposition~\ref{prop:specific_choice} will look familiar to the reader who has seen earlier proofs of factorization theorems. It seemed inevitable  to give the proof, since we need it in greater generality than is available in the literature, and since we also need to do our bookkeeping. In compensation, some preparatory results (Lemma~\ref{lem:uniform_convergence_is_preserved_under_the_analytic_function} and Corollary~\ref{cor:both properties_are_preserved_under_the_analytic_function}) are established in a greater generality that could have applications in other contexts.

Section~\ref{sec:simultaneous power factorization} is the heart of the paper. After two preparatory results under different hypotheses but with identical conclusions (Proposition~\ref{prop:simultaneous_power_factorization_for_bounded sets} for bounded $S$ and Proposition~\ref{prop:simultaneous_power_factorization_for_commutative_approximate identity} for commutative $\ai$), the main result Theorem~\ref{thm:simultaneous_power_factorization} on simultaneous power factorization can be established. Spread over several remarks, this section also contains a discussion of the result, including its relation to the literature. One point should be noted here, and that is the absence of the unitization of $\ba$. This seems to be ubiquitous in the existing proofs of factorizations, but there is no actual need for this. Its only role in the existing proofs is to be available as a unital superalgebra $\uniba$ of $\ba$ with the property that a given continuous representation of $\ba$ extends to a continuous unital representation of $\uniba$. Any other unital superalgebra with this property will do equally well. This means that the choice of this superalgebra can be adapted to the situation at hand, and it is for this reason that this freedom of choice of $\uniba$ has been incorporated into the structure. In Section~\ref{sec:positive_simultaneous_power_factorization} we shall actually benefit from this; see e.g.\ Remark~\ref{rem:centralizer_algebras} and, in particular, Theorem~\ref{thm:positive_simultaneous_power_factorization_for_banach_algebras_of_functions}.

Section~\ref{sec:positive_simultaneous_power_factorization} is concerned with positive factorizations. It contains a further refinement (Theorem~\ref{thm:positive_simultaneous_power_factorization}) of Theorem~\ref{thm:simultaneous_power_factorization}; see Remark~\ref{rem:ordered_theorem_implies_original_theorem} for an explanation why it is a refinement and not a special case.  The questions to be considered are quite natural. For example, restricting ourselves to positive pointwise non-power factorization: if $\oba$ is an ordered Banach algebra with a positive bounded left approximate identity $\ai$, if $\rba$ is a positive representation of $\oba$ on an ordered Banach space $\obs$, and if $s$ is an element of the positive cone $\pos{\obs}$ of $\obs$ such that $\lim_{\lambda}\rba(\e)s=s$, do there exist $a\in\pos{A}$ and $x_1(s)\in\pos{A}$ such that $s=\rba(a)x_1(s)$? As we shall see, such $a\in\pos{\oba}$ exists whenever the positive cone $\pos{\oba}$ of $\oba$ is closed, but it may be impossible (the example is due to Rudin; see \cite{rudin1957}) to  arrange that $x_1(s)\in\pos{\obs}$. There is presently no clear-cut answer available that distinguishes between possibility and impossibility, but it seems as if the built-in freedom concerning the superalgebra $\uniba$ could be of some use here. A positive simultaneous power factorization result for ordered Banach algebras of bounded functions, Theorem~\ref{thm:positive_simultaneous_power_factorization_for_banach_algebras_of_functions}, can be established precisely because of this freedom.

Section~\ref{sec:simultanous_power_factorization_for_sets_of_maps} combines the main result Theorem~\ref{thm:simultaneous_power_factorization} with an idea on the use of auxiliary Banach modules that, according to \cite[p.~251]{doran}, goes back to Collins and Summer (see \cite{CollinsSummers}) and Rieffel (see \cite{Rieffel1969}). The most general set-up seems to be as in Theorem~\ref{thm:simultaneous_power_factorization_for_maps}, which is a simultaneous pointwise power factorization for sets of maps with values in a Banach module. This can then be specialized to e.g.\ sets of bounded continuous functions or sets of bounded convergent nets.

In Section~\ref{sec:worked_example}, we return to the example of the left regular representation of $\conto{\RR}$. For a concretely given unbounded $S$, we know from Theorem~\ref{thm:positive_simultaneous_power_factorization_for_banach_algebras_of_functions} that a positive simultaneous power factorization is possible. Since the characteristic features of such an actually possible factorization have been included in Theorem~\ref{thm:simultaneous_power_factorization}, it should be doable to find a concrete factorization, starting from the Ansatz as provided by Theorem~\ref{thm:simultaneous_power_factorization}. With some perseverance, this can indeed be carried out.

\section{Conventions and notation}\label{sec:conventions_and_notation}

In this short section, we establish our conventions and notation.

Our algebras and vector spaces can be over the real or complex numbers, unless otherwise stated.

The set of invertible elements of a unital algebra $\ba$ is denoted by $\inv{\ba}$.
A net $\ai$ in an associative algebra $\ba$ is said to be commutative if $\e e_{\lambda^\prime}=e_{\lambda^\prime}\e$ for all $\lambda,\lambda^\prime\in\Lambda$. The unit element of a unital normed algebra is assumed to have norm 1.
A representation of a unital algebra on a vector space is not required to be unital.

An ordered vector space $\bs$ is a vector space that is ordered (also in the complex case) by a positive convex cone $\pos{\bs}\subset\bs$. The cone $\pos{\bs}$ need not be proper; in particular, it can be the whole space. The cone need not be generating. If the vector space is normed, it need not be closed.

An ordered algebra $\ba$ is an algebra that is ordered (also in the complex case) by a positive convex algebra cone $\pos{\ba}$. Again, $\pos{\ba}$ need not be proper, or be generating, or\textemdash if applicable\textemdash be closed. The unit element of a unital ordered algebra is assumed to be positive.

A positive representation of an ordered algebra $\oba$ on an ordered vector space $\obs$ is a representation $\rba$ of $\oba$ on $\obs$ such that $\rba(a)x\in\pos{\obs}$ for all $a\in\pos{\ba}$ and $x\in\pos{\bs}$. One could perhaps say that $\obs$ is then a positive $\oba$-module; there is no fixed terminology yet.

We shall be working with norm topologies only. A bounded subset of a normed space is a norm bounded subset, and a continuous map between normed spaces is continuous for the norm topologies. If $\ns$ is a normed space, then $\boundedop{\ns}$ denotes the bounded operators on $\ns$. A representation $\rba$ of an algebra $\ba$ on a normed space $\bs$ maps $\ba$ into $\boundedop{\bs}$. In line with our conventions, it is said to be continuous if it is a continuous map between the normed spaces $\ba$ and $\boundedop{\ns}$.

If $\rba:\ba\to\boundedop{\ns}$ is a continuous representation of a normed algebra $\ba$ on a normed space $\ns$, then the essential subspace $\ess{\ns}$ of $\ns$ is defined as
\[
\ess{\ns}=\overline{\textup{Span}\{\,\rba(a)x: a\in \ba,\,x\in\ns\,\}}.
\]
Then $\ess{\bs}$ is clearly invariant under $\rba(A)$, so that $\ess{\bs}$ affords a continuous representation of $\ba$ on $\ess{\bs}$. The representation is said to be non-degenerate if $\ess{\bs}=\bs$. If $\ba$ has a bounded left approximate identity $\ai$, then it is easily seen that
\[
\ess{\ns}=\{\,x\in \ns : \lim_{\lambda}\rba(\e)x=x\,\}.
\]

Although we shall occasionally speak of a Banach module over a Banach algebra, we shall usually speak of a representation of a Banach algebra on a Banach space, and\textemdash with the left regular representation and Section~\ref{sec:worked_example} as only possible exceptions\textemdash we shall also include the corresponding symbol in the notation. Since the norm of the representation repeatedly appears in the estimates, this seems to be a natural choice so as to avoid keep introducing the constant in the bilinear pairing between the algebra and the space time and time again.

If $\ts$ is a topological space, then $\contc{\ts}$, $\conto{\ts}$, and $\contb{\ts}$ denote the continuous functions on $\ts$ that have compact support, that vanish at infinity, and that are bounded, respectively.

\section{Uniform convergence on subsets of Banach modules}\label{sec:uniform_convergence_on_subsets_of_Banach_modules}

As a preparation for the main development, and as general background, this section contains a number of results revolving around the condition that $\lim_{\lambda}\rba(\e)s=s$ uniformly on $S$. Here, as elsewhere in this paper, $\rba$ is a continuous representation of a normed algebra $\ba$ on a normed space $\bs$ with non-empty subset $S$, and $\ai$ is an approximate left identity of $\ba$. We also introduce a basic example in the context of the left regular representation of $\conto{\RR}$ that will be taken up in detail in Section~\ref{sec:worked_example} again.

The first result to be mentioned is standard, and can e.g.\ be found in the literature as \cite[Lemma~17.2]{doran}. We include the proof for the convenience of the reader.

\begin{lemma}\label{lem:totally_bounded_subsets}
Let $\ba$ be a normed algebra that has a bounded left approximate identity $\ai$, let $\rba$ be a continuous representation of $\rba$ on a normed space $\ns$, and let $S$ be a totally bounded subset of $\ess{\ns}$. Then $\lim_{\lambda}\rba(\e)s=s$ uniformly on $S$.
\end{lemma}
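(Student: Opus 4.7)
The plan is to use a standard $3\epsilon$-style argument, reducing the uniform convergence on $S$ to pointwise convergence on a finite $\epsilon$-net, which is available because $S$ is totally bounded.

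First, I would record the relevant quantitative ingredients. Since $\ai$ is bounded in $\ba$ and $\rba$ is continuous, the quantity
\[
M := \sup_{\lambda\in\Lambda}\norm{\rba(\e)}
\]
is finite. Fix $\epsilon>0$. Total boundedness of $S$ yields a finite subset $\{s_1,\dots,s_n\}\subset S$ such that for every $s\in S$ there is some $i\in\{1,\dots,n\}$ with $\norm{s-s_i}<\epsilon$.

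Next, I would use the hypothesis $S\subset \ess{\ns}$ and the characterization of the essential subspace via $\ai$ recalled in Section~\ref{sec:conventions_and_notation}: for each $i$ there exists $\lambda_i\in\Lambda$ such that $\norm{\rba(\e)s_i-s_i}<\epsilon$ for all $\lambda\geq\lambda_i$. Since $\Lambda$ is directed, pick $\lambda_0\in\Lambda$ with $\lambda_0\geq\lambda_i$ for every $i=1,\dots,n$. Then for any $\lambda\geq\lambda_0$ and any $s\in S$, choosing an $s_i$ with $\norm{s-s_i}<\epsilon$, the triangle inequality gives
\[
\norm{\rba(\e)s-s}\leq\norm{\rba(\e)(s-s_i)}+\norm{\rba(\e)s_i-s_i}+\norm{s_i-s}\leq (M+2)\epsilon.
\]
Since $\epsilon>0$ was arbitrary, this establishes uniform convergence on $S$.

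There is no real obstacle here; the argument is completely standard. The only point worth flagging is that the proof uses both the boundedness of $\ai$ (to get $M<\infty$) and the total boundedness of $S$ (to reduce to finitely many $s_i$), and that the pointwise-to-uniform passage requires the index set $\Lambda$ to be directed so that a single $\lambda_0$ dominates all the $\lambda_i$ simultaneously.
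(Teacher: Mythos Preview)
Your proposal is correct and follows essentially the same $3\epsilon$ argument as the paper's proof: reduce to a finite net by total boundedness, use membership in $\ess{\ns}$ for pointwise convergence at the net points, and combine via the triangle inequality. The only cosmetic differences are that the paper bounds $\norm{\rba}$ and $\norm{\e}$ separately and tunes the net radius to land exactly at $\epsilon$, whereas you bound $\norm{\rba(\e)}$ directly and obtain $(M+2)\epsilon$; neither affects the substance.
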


\begin{proof}
If $\rba=0$, then $\ess{\ns}=\{\,0\,\}$, in which case the result is trivial.  So let us assume that $\rba\neq 0$. Let $M\geq 1$ be a bound for $\ai$. Let $\epsilon>0$ be given. Since $S$ is totally bounded, there exist $x_1,\ldots,x_n\in\ess{\ns}$ such that $S\subset\bigcup_{i=1}^n\{\,\,x\in\ns : \norm{x-x_i}<\min\left(\epsilon/(3\norm{\rba}M),\epsilon/3\right)\,\}$. Choose $\lambda^\prime\in\Lambda$ such that $\norm{\rba(\e)x_i-x_i}<\epsilon/3$ for all $i=1,\ldots,n$ and $\lambda\geq\lambda^\prime$. If $s\in S$, there exists $i_0$ such that $\norm{s-x_{i_0}}<\min(\epsilon/\left(3\norm{\rba}M\right),\epsilon/3)$. We now see that, for all $\lambda\geq\lambda^\prime$ and $s\in S$,
\begin{align*}
\norm{\rba(\e)s-s}&\leq\norm{\rba(\e)(s-x_{i_0})} + \norm{\rba(\e)x_{i_0}-x_{i_0}}+\norm{x_{i_0}-s}\\
& < \norm{\rba}\cdot M\cdot \epsilon/(3\norm{\rba}M) + \epsilon/3 + \epsilon/3\\
&=\epsilon.
\end{align*}
\end{proof}

Quite the opposite of the situation in Lemma~\ref{lem:totally_bounded_subsets},  subsets $S$ with the property  that $\lim_{\lambda}\rba(\e)s=s$ uniformly on $S$ can also be unbounded. This is shown by the following example, which will be continued in Example~\ref{ex:second_appearance}, and which will be considered in detail in Section~\ref{sec:worked_example}.

\begin{example}\label{ex:first_appearance}
Let $\ba=\conto{\mathbb R}$,  and consider the continuous left regular representation of $\ba$.
For every integer $n\geq 1$, we choose a function $e_n\in\conto{\RR}$ that takes values in $[0,1]$, equals 1 on $[-n,n]$, and equals 0 on $(-\infty,-n-1]\cup[n+1,\infty)$. Then $\{e_n\}_{n=1}^\infty$ is easily seen to be a bounded approximate identity for $\ba$.

Choose $f_0\in\conto{\mathbb R}$ such that $f_0(t)\geq 0$ for all $t\in\RR$ and such that $\norm{f_0}>1$. Let
\[
S=\{\,f\in\pos{\conto{\mathbb R}} : f(t)\leq f_0(t)\textup{ for all }t\in\mathbb R\textup{ such that }f_0(t)\leq 1\,\}.
\]
Then $S$ is non-empty and unbounded, because it contains functions with arbitrarily large norms that have compact supports in the non-empty open set $\{\,t\in\RR : f_0(t)>1\,\}$.

We claim that $\lim_{n\to\infty}\norm{e_n f-f}=0$ uniformly for $f\in S$. Indeed, if $\epsilon>0$ is given, then we choose $n_0$ so large that both $\{\,t\in\RR : f_0(t)>1\,\}\subset[-n_0,n_0]$ and $0\leq f_0(t)<\epsilon/2$ for all $t$ such that $|t|\geq n_0$.
Let $n\geq n_0$ and let $f\in S$.  If $|t|\leq n_0$, then $|t|\leq n$, so we have $|e_n(t)f(t)-f(t)|=|f(t)-f(t)|=0$. If $|t|> n_0$, then $f_0(t)\leq 1$, so, using the definition of $S$, we see that $0\leq f(t)\leq f_0(t) <\epsilon/2$. This implies that $|e_n(t)f(t)-f(t)|\leq 2 f_0(t) <\epsilon$. Therefore, $\norm{e_n f-f}<\epsilon$ for all $n\geq n_0$ and $f\in S$, and our claim has been established.
\end{example}

One might wonder to which extent the property that $\lim_{\lambda\in\Lambda}\rba(\e)s=s$ uniformly on $S$ depends on the particular choice of the bounded left approximate identity $\ai$. If $S$ is bounded, then it does not: it is an intrinsic property of $S$. This is implied by the following result.

\begin{lemma}\label{lem:transfer_for_bounded_sets}
Let $\ba$ be a normed algebra, let $\{e^\prime_\mu\}_{\mu\in M}$ be a net in $\ba$, let $\bs$ be a normed space, let $\rba$ be a continuous representation of $\ba$ on $\bs$, and let $S$ be a bounded non-empty subset of $\bs$ such that $\lim_\mu \rba(e^\prime_\mu)s=s$ uniformly on $S$.
Suppose that $\ai$ is a bounded left approximate identity for $A$. Then also $\lim_\lambda \rba(\e)s=s$ uniformly on $S$.

\begin{proof}
Let $\epsilon>0$ be given. Using the boundedness of $\ai$, we can choose $\mu_0\in M$ such that both $\norm{\rba}(\sup_{\lambda\in\Lambda}\norm{\e})\norm{\rba(e^\prime_{\mu_0})s-s}<\epsilon/3$ and $\norm{\rba(e^\prime_{\mu_0})s-s}<\epsilon/3$ for all $s\in S$. Using the boundedness of $S$, we see that there exists $\lambda_0$ such that $\norm{\rba}\norm{\e e^\prime_{\mu_0}-e^\prime_{\mu_0}}(\sup_{s\in S}\norm{s})<\epsilon/3$ for all $\lambda\geq\lambda_0$. Then, for all $\lambda\geq\lambda_0$ and $s\in S$,
\begin{align*}
\norm{\rba(\e)s-s}&\leq\norm{\rba(\e-\e e^\prime_{\mu_0})s}+\norm{\rba(\e e^\prime_{\mu_0} -e^\prime_{\mu_0})s}+\norm{\rba(e^\prime_{\mu_0})s-s}\\
&\leq \norm{\rba}\norm{\e}\norm{s-\rba(e^\prime_{\mu_0})s} + \norm{\rba}\norm{\e e^\prime_{\mu_0}-e^\prime_{\mu_0}}\norm{s} + \epsilon/3\\
&<\epsilon/3 + \epsilon/3 + \epsilon/3\\
& =\epsilon.
\end{align*}
\end{proof}
\end{lemma}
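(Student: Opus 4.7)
My plan is a standard three-term triangle inequality argument in which a single auxiliary element $e^\prime_{\mu_0}$ from the original net is inserted, then the new approximate identity $\ai$ is allowed to run against that fixed element. Concretely, I would decompose
$$\rba(\e)s - s = \rba(\e)\bigl(s - \rba(e^\prime_{\mu_0})s\bigr) + \rba(\e e^\prime_{\mu_0} - e^\prime_{\mu_0})s + \bigl(\rba(e^\prime_{\mu_0})s - s\bigr)$$
and try to force each of the three summands to be less than $\epsilon/3$, for any prescribed $\epsilon > 0$.

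Assuming $\rba \neq 0$ (otherwise the claim is trivial), I would set $M = \sup_\lambda \norm{\e}$, which is finite by hypothesis. Given $\epsilon > 0$, I would first appeal to the uniform convergence of $\rba(e^\prime_\mu)s$ to $s$ on $S$ to select a single index $\mu_0$ such that $\norm{\rba(e^\prime_{\mu_0})s - s}$ is smaller than both $\epsilon/3$ and $\epsilon/(3\norm{\rba}M)$ for every $s \in S$. This controls the first and third summands uniformly on $S$, using $\norm{\rba(\e)} \leq \norm{\rba}M$ for the first. For the middle summand, I would then invoke the defining property of $\ai$ applied to the \emph{fixed} algebra element $e^\prime_{\mu_0}$: there exists $\lambda_0$ making $\norm{\e e^\prime_{\mu_0} - e^\prime_{\mu_0}}$ as small as we like for $\lambda \geq \lambda_0$. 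Setting $N = \sup_{s \in S} \norm{s}$, which is finite because $S$ is bounded, I would choose $\lambda_0$ so that $\norm{\rba}\,\norm{\e e^\prime_{\mu_0} - e^\prime_{\mu_0}}\,N < \epsilon/3$; this forces the middle summand to be less than $\epsilon/3$ uniformly on $S$. Summing the three estimates yields $\norm{\rba(\e)s - s} < \epsilon$ for all $\lambda \geq \lambda_0$ and $s \in S$.

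The only delicate point\textemdash and the only place the hypothesis that $S$ is bounded is actually used\textemdash is the middle summand. Without a finite $N$, the scalar factor $\norm{\e e^\prime_{\mu_0} - e^\prime_{\mu_0}}$ tending to zero would not translate into a uniform bound on $\norm{\rba(\e e^\prime_{\mu_0} - e^\prime_{\mu_0})s}$, since $\norm{s}$ could escape to infinity along $S$. The first and third summands pose no difficulty, as the $S$-uniformity is already built into the selection of $\mu_0$ and is carried through $\rba(\e)$ using only the uniform bound $M$ on the new approximate identity. This is entirely consistent with the broader message of the surrounding section, namely that for unbounded $S$ the uniform convergence property can genuinely depend on the particular choice of approximate identity, as is already hinted at by Example~\ref{ex:first_appearance}.
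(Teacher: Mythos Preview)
Your proposal is correct and matches the paper's proof essentially line for line: the same three-term decomposition via the fixed auxiliary element $e^\prime_{\mu_0}$, the same choice of $\mu_0$ to control the first and third terms uniformly on $S$, and the same use of the boundedness of $S$ together with the approximate-identity property at $e^\prime_{\mu_0}$ to handle the middle term. Your added remark that $\rba=0$ is trivial is a harmless refinement.
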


It would be very nice if a similar result were true for unbounded $S$, for the following reason. It is known, as a consequence of Sinclair's work on analytic semigroups in Banach algebras, that every separable Banach algebra with a bounded two-sided approximate identity, as well as  every Banach algebra with a sequential bounded two-sided approximate identity, has a commutative bounded two-sided sequential approximate identity (even one that is bounded by 1 in an equivalent algebra norm); see \cite[Corollary~2.9.43]{Dalesbook}, \cite[Theorem~3.5]{doran}, or \cite[Corollary~5.3.4]{Palmer_I}. Consequently, commutative bounded (left) approximate identities are in no short supply. A result similar to Lemma~\ref{lem:transfer_for_bounded_sets} for unbounded $S$ would, therefore, allow us in a considerable number of cases to transfer the uniform convergence for a given bounded left approximate identity to such a commutative bounded (left) approximate identity, and subsequently Theorem~\ref{thm:simultaneous_power_factorization} could then be applied.
 All in all,  a result for unbounded $S$ similar to Lemma~\ref{lem:transfer_for_bounded_sets}  would imply that the conclusions of Theorem~\ref{thm:simultaneous_power_factorization} would hold in quite a few cases, regardless of the original $\ai$ being commutative or $S$ being bounded.

It is, therefore, relevant to note that it can actually occur that $\lim_{\lambda\in\Lambda}\rba(\e)s=s$ uniformly on $S$ for one bounded left approximate identity $\ai$ of $\ba$, whereas this fails for another bounded left approximate identity. This is already possible for a commutative algebra, as is shown in the following example, which is a continuation of Example~\ref{ex:first_appearance}.

\begin{example}\label{ex:second_appearance}
We consider the left regular representation of $\conto{\RR}$ and the unbounded subset $S$  as in Example~\ref{ex:first_appearance} again. For every integer $n\geq 1$, we choose a function $e^\prime_n\in\conto{\RR}$ that takes values in $[0,1]$, equals $1-1/n$ on $[-n,n]$, and equals 0 on $(-\infty,-n-1]\cup[n+1,\infty)$. We claim that $\{e^\prime_n\}_{n=1}^\infty$ is a bounded approximate identity for $\conto{\RR}$. To see this, let $\epsilon>0$ and $f\in\conto{\RR}$ be given. We choose $n_0\geq 1$ such that both $\norm{f}/n_0<\epsilon$ and $|f(t)|<\epsilon/2$ for all $t$ such that $|t|\geq n_0$.  Now let $n\geq n_0$. If $|t|\leq n$, then $|e^\prime_n(t)f(t)-f(t)|=|f(t)/n|\leq\norm{f}/n\leq\norm{f}/n_0<\epsilon$.  If $|t|>n$, then $|t|>n_0$, and in that case $|e^\prime_n(t)f(t)-f(t)|\leq 2 |f(t)|<\epsilon$. This shows that $\norm{e^\prime_n f-f}<\epsilon$ for all $n\geq n_0$, and our claim has been established.

However, it is not true that $\lim_{\lambda\in\Lambda}\rba(e^\prime_n)s=s$ uniformly on $S$. In fact, it is even true that $\sup_{f\in S}\norm{e^\prime_n f-f}=\infty$ for all sufficiently large $n$. To see this, we choose $n_0$ such that $\{\,t\in\RR : f_0(t)>1\,\}\subset[-n_0,n_0]$.
Let $n\geq n_0$. If $f\in \contc{\RR}$ is supported in $\{t\in\RR : f_0(t)>1\,\}$, then $f\in S$. Furthermore, $\norm{e^\prime_n f-f}=\sup_{|t|\leq n_0}|e^\prime_n(t)f(t)-f(t)|=\sup_{|t|\leq n_0} |f(t)|/n=\norm{f}/n$. Since $\norm{f}$ can be arbitrarily large, this shows that $\sup_{f\in S}\norm{e^\prime_n f-f}=\infty$ for all $n\geq n_0$. Consequently, the convergence on $S$ using the $e^\prime_n$ is pointwise, but not uniform.
\end{example}

Our next two results, Lemmas~\ref{lem:uniform_convergence_is_preserved_under_the_analytic_function} and its Corollary~\ref{cor:both properties_are_preserved_under_the_analytic_function}, will be applied only in the context of Proposition~\ref{prop:specific_choice}. In the literature, the pertinent statements in that proposition are proved in that particular context, but the underlying phenomenon is more general. It seems worthwhile to make it explicit.

\begin{lemma}\label{lem:uniform_convergence_is_preserved_under_the_analytic_function} Let $\uniba$ be a unital Banach algebra, let $\ai$ be a bounded net in $\uniba$ of bound $M\geq 1$, let $\eta>0$, and
suppose that $f:\{\,z\in\CC\colon \abs{z}<M+\eta\,\}\rightarrow \CC$ is analytic with $f(1)=1$. Then $\fai$ is a bounded net in $\uniba$, and, for all $\lambda\in\Lambda$, $\fe$  is an element of the unital Banach subalgebra of $\uniba$ that is generated by $\e$.

Furthermore, if $\ns$ is a normed space, if $\rba$ is a continuous unital representation of $\uniba$ on $\ns$, and if $S$ is a non-empty subset of $\ns$ such that $\lim_\lambda \rba(\e)s=s$ uniformly on $S$, then $\lim_\lambda \rba(\fe) s=s$ uniformly on $S$.
\end{lemma}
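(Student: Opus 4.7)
My plan is to define $\fe$ as the sum of the Taylor series of $f$ at $0$ evaluated at $\e$, and then to exploit $f(1)=1$ through the auxiliary function $g(z):=(f(z)-1)/(z-1)$, which has a removable singularity at $z=1$ and is therefore analytic on the entire disk $\{\,z\in\CC:\abs{z}<M+\eta\,\}$. Writing $f(z)=\sum_{k=0}^\infty c_k z^k$, the fact that the radius of convergence strictly exceeds $M$ yields $\sum_{k=0}^\infty\abs{c_k}M^k<\infty$. Since $\norm{\e}\leq M$, the series $\sum_{k=0}^\infty c_k\e^k$ converges absolutely in $\uniba$, and I define $\fe$ as its sum, with the convention $\e^0=1_\uniba$. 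The bound $\norm{\fe}\leq\sum_{k=0}^\infty\abs{c_k}M^k$ is uniform in $\lambda$, which gives boundedness of $\fai$, and each partial sum is a polynomial in $\e$ with constant term $c_0\cdot 1_\uniba$, so $\fe$ lies in the closed unital subalgebra of $\uniba$ generated by $\e$. This handles the first two assertions.

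For the uniform convergence statement, the same construction applied to $g(z)=\sum_{k=0}^\infty g_k z^k$ produces $g(\e)\in\uniba$ with $\norm{g(\e)}\leq\sum_{k=0}^\infty\abs{g_k}M^k=:C_g<\infty$ uniformly in $\lambda$. The power-series identity $f(z)-1=(z-1)g(z)$, evaluated at $z=\e$ by multiplying the absolutely convergent series termwise, yields
\[
\fe-1_\uniba=(\e-1_\uniba)g(\e)=g(\e)(\e-1_\uniba),
\]
the second equality holding because both factors lie in the commutative closed unital subalgebra generated by $\e$. Applying the unital representation $\rba$ and using $\rba(1_\uniba)=\identitymap$, I obtain
\[
\rba(\fe)s-s=\rba(g(\e))\bigl(\rba(\e)s-s\bigr)
\]
for all $s\in S$. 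Since $\norm{\rba(g(\e))}\leq\norm{\rba}\,C_g$ is bounded independently of $\lambda$, the assumed uniform convergence $\rba(\e)s\to s$ on $S$ propagates at once to $\rba(\fe)s\to s$ uniformly on $S$.

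The main obstacle I anticipate is justifying the operator identity $\fe-1_\uniba=(\e-1_\uniba)g(\e)$. I plan to verify it by direct comparison of partial sums of the two absolutely convergent power series, with the tail error controlled by quantities of the form $\abs{g_N}M^{N+1}\to 0$; alternatively, one could invoke the holomorphic functional calculus in the commutative closed unital subalgebra generated by $\e$, whose spectrum lies in $\{\,\abs{z}\leq M\,\}\subset\{\,\abs{z}<M+\eta\,\}$. Once this identity is in place, everything else reduces to the clean norm estimate above, which cleanly transfers the uniform convergence from $\rba(\e)$ to $\rba(\fe)$.
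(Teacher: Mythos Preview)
Your argument is correct. Both your proof and the paper's rest on the same factorization idea\textemdash pulling a factor of $\e-1_\uniba$ out of $\fe-1_\uniba$\textemdash but the packaging differs. The paper works term by term: from $\sum_n\alpha_n=f(1)=1$ it writes $\rba(\fe)s-s=\sum_{n\ge1}\alpha_n\,\rba(\e^n-1_\uniba)s$, factors each $\e^n-1_\uniba=(\e^{n-1}+\cdots+1_\uniba)(\e-1_\uniba)$, and bounds the geometric sums by $nM^{n-1}$ (using $M\ge1$) to obtain the constant $\sum_{n\ge1}n|\alpha_n|M^{n-1}$, finite because the Maclaurin series of $f'$ converges absolutely at $M$. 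You instead introduce $g(z)=(f(z)-1)/(z-1)$ globally and use the single identity $\fe-1_\uniba=g(\e)(\e-1_\uniba)$. The two are the same computation in disguise\textemdash your $g$ is exactly $\sum_{n\ge1}\alpha_n(z^{n-1}+\cdots+1)$\textemdash but your route gives a cleaner one-line estimate at the price of justifying the functional-calculus identity (which, as you note, is a straightforward Cauchy-product verification), whereas the paper's version stays entirely at the level of elementary series manipulations and needs no such step.
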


In Lemma~\ref{lem:uniform_convergence_is_preserved_under_the_analytic_function}, $\fe$ is defined using the Maclaurin series of $f$. If $\FF=\CC$, then this agrees with the holomorphic functional calculus, since the spectrum of $\e$ is contained in the domain of $f$ for all $\lambda\in\Lambda$. If $\FF=\RR$, then it is tacitly assumed that $f(z)\in\RR$ if $z\in\RR$ and $|z|<M+\eta$. The same remarks apply to Corollary~\ref{cor:both properties_are_preserved_under_the_analytic_function} below.

\begin{proof}[Proof of Lemma~\ref{lem:uniform_convergence_is_preserved_under_the_analytic_function}]
Let $f(z)=\sum_{n=0}^{\infty}\alpha_nz^n$ be the Maclaurin series of $f$, which is absolutely convergent if $\abs{z}<M+\eta$. It is clear that $\norm{\fe}\leq\sum_{n=0}^\infty |\alpha_n|M^n$ for all $\lambda\in\Lambda$, so that
$\fai$ is a bounded net in $\uniba$.  It is likewise clear that, for all $\lambda\in\Lambda$, $\fe$ is an element of the unital Banach subalgebra of $\uniba$ that is generated by $\e$. Since $\sum_{n=0}^{\infty}\alpha_n=f(1)=1$, we have, for all $s\in
S$,
\begin{align*}
\norm{\rba(\fe)s-s} &=\norm{\sum_{n=0}^\infty \alpha_n \rba(\e^n)
s-s}\\
&=\norm{\sum_{n=0}^\infty \alpha_n \rba(\e^n) s-\sum_{n=0}^\infty
\alpha_n s }\\
&\leq\sum_{n=0}^\infty\abs{\alpha_n}\norm{\rba(\e^n-\unit) s}\\
&=\sum_{n=1}^\infty\abs{\alpha_n}\norm{\rba((\e^{n-1}+...+\e+\unit)(\e-\unit))s}\\
&\leq \norm{\rba}\sum_{n=1}^\infty\abs{\alpha_n}(M^{n-1}+...+M+1)\norm{\rba(\e-\unit) s}\\
&\leq\norm{\rba}\left(\sum_{n=1}^\infty n \abs{\alpha_n}M^{n-1}\right)\norm{\rba(\e)
s-s},
\end{align*}
where the fact that $M\geq 1$ was used in the final step.
Since the Maclaurin series of $f^\prime$ is absolutely convergent in $M$, we see that $\sum_{n=1}^{\infty}n\abs{\alpha_n}M^{n-1}<\infty$.
Combining this with the assumption that $\lim_\lambda \rba(\e) s=s$ uniformly on $S$, the statement in the
lemma follows.
\end{proof}

\begin{corollary}\label{cor:both properties_are_preserved_under_the_analytic_function} Let $\ba$ be a Banach  subalgebra of the unital Banach algebra $\uniba$, and suppose that $\ba$ contains a left approximate identity
$\ai$ for itself of bound $M\geq 1$. Let $\eta>0$, and suppose that $f:\{\,z\in\CC\colon \abs{z}<M+\eta\,\}\rightarrow \CC$ is analytic with $f(1)=1$. Then $\fai$ is a bounded net in $\uniba$ such that $\lim_{\lambda}\fe a=a$ for all $a\in\ba$. For all $\lambda\in\Lambda$, $\fe$ is an element of the unital Banach subalgebra of $\uniba$ that is generated by $\e$.

Furthermore, if $\ns$ is a normed space, if $\rba$ is a continuous unital representation of $\uniba$ on $\ns$, and if $S$ is a non-empty subset of $\ns$ such that $\lim_\lambda \rba(\e)s=s$ uniformly on $S$, then $\lim_\lambda \rba(\fe) s=s$ uniformly on $S$.
\end{corollary}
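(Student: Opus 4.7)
The plan is to obtain the corollary as a twofold application of Lemma~\ref{lem:uniform_convergence_is_preserved_under_the_analytic_function}. The assertions that $\fai$ is a bounded net in $\uniba$, that each $\fe$ lies in the unital Banach subalgebra of $\uniba$ generated by $\e$, and the final uniform convergence statement on $S$ for the given representation $\rba$ transfer verbatim from that lemma, applied to the bounded net $\ai$ viewed inside $\uniba$ (bounded by $M$) and to the representation $\rba$ given in the hypothesis. Nothing beyond invoking the previous lemma is needed for these parts.

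The only genuinely new ingredient is the convergence $\lim_\lambda \fe a = a$ for every $a\in\ba$. To establish this, I would invoke the continuous unital representation of $\uniba$ on the normed space $\uniba$ by left multiplication, and apply the second half of Lemma~\ref{lem:uniform_convergence_is_preserved_under_the_analytic_function} with the singleton $S=\{a\}$ for an arbitrarily fixed $a\in\ba$. Uniform convergence on a singleton is just ordinary convergence, and the required hypothesis $\lim_\lambda \e a = a$ is precisely the statement that $\ai$ is a left approximate identity for $\ba$, evaluated at $a\in\ba$. The lemma then delivers $\lim_\lambda \fe a = a$ in $\uniba$, which is what is wanted.

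No real obstacle is anticipated: the whole argument amounts to identifying the correct representation (the left regular representation of $\uniba$ on itself) and the correct subset (singletons drawn from $\ba$) so that the left approximate identity property of $\ai$ for $\ba$ supplies the pointwise convergence hypothesis required by Lemma~\ref{lem:uniform_convergence_is_preserved_under_the_analytic_function}; the remaining assertions are already explicitly covered by that lemma.
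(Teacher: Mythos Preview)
Your proposal is correct and matches the paper's proof essentially verbatim: the paper likewise applies Lemma~\ref{lem:uniform_convergence_is_preserved_under_the_analytic_function} twice, first to the left regular representation of $\uniba$ with singleton subsets $\{a\}$ of $\ba$ to obtain $\lim_\lambda \fe a=a$, and then to the given representation $\rba$ on $\ns$ for the final assertion.
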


If $\ba\neq\{\,0\,\}$, then it is automatic that $M\geq 1$. The requirement that $M\geq 1$ is necessary to be able to include the case of the zero Banach subalgebra, because also in that case one needs $f(1)$ to be defined in the statement of the corollary. The same remark applies to several results in the sequel.

\begin{proof}[Proof of Corollary~\ref{cor:both properties_are_preserved_under_the_analytic_function}]
We apply Lemma~\ref{lem:uniform_convergence_is_preserved_under_the_analytic_function} in two contexts: first in that of the left regular representation of $\uniba$, where  we take all singleton subsets of $\ba$ for $S$, and then in that of the given representation $\rba$ of $\uniba$ on $\ns$.
\end{proof}

Our next result is also concerned with preservation of uniform convergence. It will be needed in the proof of Proposition~\ref{prop:simultaneous_power_factorization_for_bounded sets}.

\begin{lemma}\label{lem:uniform_convergence_is_preserved_under_action}
Let $\ba$ be a normed subalgebra of the normed algebra $\uniba$, and suppose that $\ba$ contains a bounded left approximate identity $\ai$ for itself. Let $\ns$ be a normed space, and let $\rba$ be a continuous representation of $\uniba$ on $\ns$. Suppose that $S$ is a bounded non-empty subset of $\ns$ such that $\lim_\lambda \rba(\e)s=s$ uniformly on $S$, and that $b\in\uniba$ is such that $b\e\in\ba$ for all $\lambda\in \Lambda$.  Then $\lim_\lambda \rba(\e) \rba(b)s=\rba(b)s$ uniformly on $S$.
\end{lemma}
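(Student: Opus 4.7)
The plan is to show that $\pi(e_\lambda)\pi(b)s-\pi(b)s=\pi(e_\lambda b-b)s\to 0$ uniformly on $S$ by inserting a telescoping intermediate term $\pi(be_\mu)s$ (with $e_\mu\in\{e_\lambda\}_{\lambda\in\Lambda}$ chosen appropriately) and reducing the problem to the given data: the uniform convergence hypothesis on $S$, the boundedness of $S$, the boundedness of the net, and the left approximate identity property applied inside $\ba$ to the element $be_\mu\in\ba$.

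Concretely, I would write, for any $\lambda,\mu\in\Lambda$,
\begin{align*}
\pi(e_\lambda)\pi(b)s-\pi(b)s &=\pi(e_\lambda b)\bigl(s-\pi(e_\mu)s\bigr)\\
&\quad+\bigl(\pi(e_\lambda be_\mu)-\pi(be_\mu)\bigr)s\\
&\quad+\pi(b)\bigl(\pi(e_\mu)s-s\bigr),
\end{align*}
where I have used that $\pi$ is a representation, so that $\pi(e_\lambda b)\pi(e_\mu)s=\pi(e_\lambda be_\mu)s$ and $\pi(b)\pi(e_\mu)s=\pi(be_\mu)s$. Taking norms and using $\|e_\lambda b\|\le M\|b\|$ with $M$ a bound for $\{e_\lambda\}_{\lambda\in\Lambda}$, each summand is bounded by a constant (independent of $\lambda$ and $s$) times one of three quantities: $\|\pi(e_\mu)s-s\|$, $\|e_\lambda be_\mu-be_\mu\|\cdot\|s\|$, or again $\|\pi(e_\mu)s-s\|$.

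Given $\epsilon>0$, I would first use the hypothesis that $\lim_\lambda\pi(e_\lambda)s=s$ uniformly on $S$ to pick a single $\mu\in\Lambda$ that makes the first and third terms smaller than $\epsilon/3$ uniformly in $s\in S$. With this $\mu$ now fixed, the hypothesis $be_\mu\in\ba$ combined with the fact that $\{e_\lambda\}_{\lambda\in\Lambda}$ is a left approximate identity for $\ba$ yields $\lim_\lambda\|e_\lambda be_\mu-be_\mu\|=0$; since $S$ is bounded, one can then choose $\lambda_0\in\Lambda$ so that the middle term is below $\epsilon/3$ uniformly in $s\in S$ for all $\lambda\ge\lambda_0$. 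Summing the three estimates gives the result.

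The only nontrivial step is locating the right place to split: the direct term $\pi(e_\lambda b-b)s$ cannot be controlled by the approximate identity property since the hypothesis is $be_\lambda\in\ba$, not $e_\lambda b\in\ba$; inserting $be_\mu$ transfers the action of the approximate identity to an element of $\ba$, where the hypothesis does apply. The boundedness of $S$ is used precisely (and only) to absorb the $\|s\|$-factor that appears in the middle term, explaining why this assumption is essential in the statement.
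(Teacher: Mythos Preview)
Your proof is correct and follows essentially the same approach as the paper's: the same three-term telescoping decomposition via an auxiliary index $\mu$ (the paper writes $\lambda_0$), the same use of the bound $M$ on $\{e_\lambda\}$ to make the first term uniform in $\lambda$, and the same use of $be_\mu\in A$ together with the boundedness of $S$ to handle the middle term. Your closing commentary on why the splitting is needed and where boundedness of $S$ enters is accurate and matches the structure of the paper's argument.
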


\begin{proof} Let $\epsilon>0$ be given. Choose a bound $M\geq 1$ for $\ai$ and a bound $M^\prime$ for $S$. Choose $\lambda_0\in\Lambda$ such that $\norm{\rba}M\norm{b}\norm{\rba(e_{\lambda_0})s-s}<\epsilon/3$ for all $s\in S$. There exists $\lambda^\prime\in\Lambda$ such that $\norm{\rba}\norm{\e be_{\lambda_0}-be_{\lambda_0}}M^\prime<\epsilon/3$ for all $\lambda\geq\lambda^\prime$, since $be_{\lambda_0}\in\na$. Then, for all $\lambda\geq\lambda^\prime$ and $s\in S$, we have
\begin{align*}
\norm{\rba(\e)\rba(b)s-\rba(b)s}&=\norm{\rba(\e b-b)s}\\
&\leq\norm {\rba( \e b - \e b e_{\lambda_0})s}\! + \! \norm{\rba(\e b e_{\lambda_0} - b e_{\lambda_0} )s} \! + \! \norm{\rba(b e_{\lambda_0} - b )s}\\
&\leq \norm{\rba}M\norm{b}\norm{s-\rba(e_{\lambda_0})s} + \norm{\rba}\norm{\e b e_{\lambda_0}- b e_{\lambda_0}}M^\prime + \\
&\quad + \norm{\rba}\norm{b}\norm{\rba(e_{\lambda_0})s-s}\\
&<\epsilon/3 + \epsilon/3 + \epsilon/3\\
&=\epsilon.
\end{align*}

\end{proof}

Finally, we arrive at the main proposition in the current section. It will play a key role in the sequel. The use of the meromorphic function $z\mapsto(1-r+rz)^{-1}$ is a common occurrence in the literature on factorization theorems, and it goes back to Cohen's original paper \cite{cohen1959}.

\begin{proposition}\label{prop:specific_choice} Let $\ba$ be a Banach subalgebra of the unital Banach algebra $\uniba$, and suppose that $\ba$ contains a left approximate identity $\ai$ for itself of bound $M\geq 1$.  Choose $r$ such that $0<r<(M+1)^{-1}$, and let
\[
f(z)=\frac{1}{1-r+rz}\quad\left(z\in\CC,\, |z|<\frac{1}{r} -1\right).
\]
Then $\fai$ is a bounded net in $\uniba$ such that $\lim_{\lambda}f(\e)a=a$ for all $a\in\ba$. For every $\lambda\in\Lambda$, $f(\e)$  is an element of the unital Banach subalgebra of $\uniba$ that is generated by $\e$, 
\begin{equation}\label{eq:upper_bound_for_f_e_lambda}
\norm{f(\e)}\leq(1-r-rM)^{-1},
\end{equation}
$f(\e)$ is invertible in $\uniba$, and 
\begin{equation}\label{eq:inverse_of_f_e_lambda}
f(\e)^{-1}=(1-r)\unit + r\e.
\end{equation}

Furthermore, if $\ns$ is a normed space, if $\rba$ is a continuous unital representation of $\uniba$ on $\ns$, and if $S$ is a non-empty subset of $\bs$ such that $\lim_\lambda\rba(\e)s=s$ uniformly on $S$, then, for all $j\geq 1$, $\lim_{\lambda}\rba(f(\e)^j)s=s$ uniformly on $S$. If, in addition, $S$ is bounded, and if $b\in\uniba$ is such that $b\e\in\ba $ for all $\lambda\in\Lambda$, then, for all $j\geq 1$, $\lim_{\lambda}\rba(f(\e)^j)\rba(b)s=\rba(b)s$ uniformly on $S$.
\end{proposition}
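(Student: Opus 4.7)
The hypothesis $0<r<(M+1)^{-1}$ says exactly that $M<1/r-1$, so $f$ is holomorphic on the open disk of radius $M+\eta$ for $\eta=1/r-1-M>0$, and $f(1)=1$. Consequently, the first paragraph of the statement (apart from the norm bound~\eqref{eq:upper_bound_for_f_e_lambda} and the invertibility claim~\eqref{eq:inverse_of_f_e_lambda}), together with the $j=1$ case of the second paragraph, is an immediate instance of Corollary~\ref{cor:both properties_are_preserved_under_the_analytic_function}.

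What remains to handle is the norm bound, the invertibility, and the higher powers in the second paragraph. For the norm bound I would make the Maclaurin series of $f$ explicit,
\[
f(z)=\frac{1}{1-r}\cdot\frac{1}{1+\frac{r}{1-r}z}=\sum_{n=0}^\infty\frac{(-r)^n}{(1-r)^{n+1}}z^n,
\]
which converges absolutely for $\abs{z}<(1-r)/r=1/r-1$; substituting $\e$, using $\norm{\e^n}\leq M^n$, and summing the resulting geometric series with ratio $rM/(1-r)<1$ (this is where $r(M+1)<1$ is used) yields exactly $(1-r-rM)^{-1}$. For the invertibility I would multiply $(1-r)\unit+r\e$ into the absolutely convergent series defining $f(\e)$ and watch it telescope: with $\alpha_n=(-r)^n/(1-r)^{n+1}$ one checks $(1-r)\alpha_0=1$ and $(1-r)\alpha_n+r\alpha_{n-1}=0$ for $n\geq 1$, so $((1-r)\unit+r\e)f(\e)=\unit$, and commutativity of the unital Banach subalgebra generated by $\e$ gives the identity on the other side as well.

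For arbitrary $j\geq 1$ in the second paragraph I would apply Corollary~\ref{cor:both properties_are_preserved_under_the_analytic_function} a second time, now to $f^j$, which is still analytic on the same disk with $f^j(1)=1$; the Cauchy product of absolutely convergent power series at $\e$ gives $(f^j)(\e)=f(\e)^j$, so this produces the desired uniform convergence of $\rba(f(\e)^j)s$ to $s$. Finally, when $S$ is bounded and $b\e\in\ba$ for all $\lambda$, Lemma~\ref{lem:uniform_convergence_is_preserved_under_action} already gives $\lim_\lambda\rba(\e)\rba(b)s=\rba(b)s$ uniformly on $S$, and reapplying what has just been established to the parametrized family $\{\rba(b)s:s\in S\}$ in place of $S$ finishes the argument.

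No genuine obstacle is anticipated; the proposition is essentially an assembly of previously developed tools together with an explicit computation with the particular function $f$. The only point requiring care is the legitimacy of the termwise series manipulations used for the norm estimate, for the telescoping inversion, and for the identity $(f^j)(\e)=f(\e)^j$, but all of these are justified by absolute convergence of the relevant power series in the Banach algebra $\uniba$.
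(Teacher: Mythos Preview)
Your proposal is correct and follows essentially the same approach as the paper's proof: both apply Corollary~\ref{cor:both properties_are_preserved_under_the_analytic_function} to $f$ and to $f^j$, compute the norm bound from the explicit Maclaurin series, and handle the bounded-$S$ clause by invoking Lemma~\ref{lem:uniform_convergence_is_preserved_under_action} to transfer uniform convergence to $\rba(b)S$. The only cosmetic difference is that you verify \eqref{eq:inverse_of_f_e_lambda} by an explicit telescoping computation, whereas the paper simply appeals to the functional calculus (noting that $f$ has no zero on its domain); both arguments are equally valid.
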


\begin{proof}
The pole of $f$ in the complex plane is located at $1/r - 1$. Since $|1/r-1|=1/r-1>M$, Corollary~\ref{cor:both properties_are_preserved_under_the_analytic_function} applies. This shows that $\fai$ is a bounded net in $\uniba$ such that $\lim_{\lambda}f(\e)a=a$ for all $a\in\ba$, and that, for every $\lambda\in\Lambda$, $f(\e)$ is an element of the unital Banach subalgebra of $\uniba$ that is generated by $\e$.

Furthermore, for $z\in\CC$ such that $|z|<1/r-1$, we have
\[
f(z)=\frac{1}{1-r}\sum_{n=0}^\infty\left(\frac{r}{r-1}\right)^n
z^n,
\]
so that, for all $\lambda\in\Lambda$,
\[
\norm{f(\e)}\leq\frac{1}{1-r}\sum_{n=0}^{\infty}\left(\frac{r}{1-r}\right)^nM^n=(1-r-rM)^{-1}.
\]
Since $f$ has no zero on its domain, it is clear from the properties of the functional calculus that all $f(\e)$ are invertible in $\uniba$ with inverses as in \eqref{eq:inverse_of_f_e_lambda}.

Corollary~\ref{cor:both properties_are_preserved_under_the_analytic_function} applies to $f^j$ for all $j\geq 1$, and this shows that, for all $j\geq 1$, $\lim_{\lambda}\rba(f(\e)^j)s=\lim_{\lambda}\rba(f^j(\e))s=s$ uniformly on $S$ whenever $\lim_\lambda\rba(\e)s=s$ uniformly on $S$.

If $S$ is bounded, if $\lim_\lambda\rba(\e)s=s$ uniformly on $S$, and if $b\in\uniba$ is such that $b\e\in\ba$ for all $\lambda\in\Lambda$, then Lemma~\ref{lem:uniform_convergence_is_preserved_under_action} shows that  $\lim_{\lambda}\rba(\e)s^\prime=s^\prime$ uniformly on $S^\prime:=\rba(b)S$. Applying what we have just proved to $S^\prime$, we conclude that, for all $j\geq 1$,  $\lim_\lambda \rba(f(\e)^j) \rba(b)s=\rba(b)s$ uniformly on $S$.  This completes the proof.
\end{proof}

\section{simultaneous power factorization}\label{sec:simultaneous power factorization}

This section contains the central result of the paper, which is Theorem~\ref{thm:simultaneous_power_factorization}.  In that theorem, it is assumed that the subset $S$ is bounded or that the bounded left approximate identity $\ai$ of $\ba$ is commutative. The conclusions of the theorem are the same in both cases. In fact, the proof of the theorem covers both cases at the same time, because it relies on the identical parts (1) through (6) in Proposition~\ref{prop:simultaneous_power_factorization_for_bounded sets} (for the case of bounded $S$) and Proposition~\ref{prop:simultaneous_power_factorization_for_commutative_approximate identity} (for the case of commutative $\ai$). The proofs of the two propositions, however, are different. We shall now establish these two preparatory results, and we start with the bounded case. For this, we first record an algebraic identity, which was used implicitly in \cite[proof of Theorem~1]{Allan1976}. The elementary proof is omitted.

\begin{lemma}\label{lem:ring_identity}
Let $R$ be a unital ring. Then, for all $a,b\in R$ and $n\geq1$,
\[a^n-b^n=\sum_{i=0}^{n-1}a^{n-1-i}(a-b)b^{i}.\]
\end{lemma}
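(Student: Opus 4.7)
The identity is a standard telescoping formula, and there are two natural routes to it. The cleanest is a direct telescoping argument: for each index $i$ in the range $0\leq i\leq n-1$, expand the distributive product as
\[
a^{n-1-i}(a-b)b^{i}=a^{n-i}b^{i}-a^{n-1-i}b^{i+1}.
\]
Summing over $i$, the right-hand side is a telescoping sum in which consecutive terms cancel, because the negative term at index $i$ is $a^{n-1-i}b^{i+1}$, which equals the positive term at index $i+1$. Only the extreme terms survive: the $i=0$ positive contribution $a^{n}b^{0}=a^{n}$, and the $i=n-1$ negative contribution $-a^{0}b^{n}=-b^{n}$. This yields exactly $a^{n}-b^{n}$, as claimed. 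The only mild care required in a noncommutative ring is to preserve the left-to-right order in each factor, which the expansion above already does.

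Alternatively, the identity admits a quick proof by induction on $n$. For $n=1$, both sides reduce to $a-b$. For the inductive step, one writes
\[
a^{n+1}-b^{n+1}=a(a^{n}-b^{n})+(a-b)b^{n},
\]
applies the induction hypothesis to $a^{n}-b^{n}$, multiplies the resulting sum on the left by $a$ to turn $a^{n-1-i}$ into $a^{n-i}$, and then absorbs the additional term $(a-b)b^{n}$ as the $i=n$ summand.

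There is really no obstacle here; the only thing worth emphasising in the write-up is that the argument makes no use of commutativity between $a$ and $b$, which is exactly what makes the identity applicable in Lemma~\ref{lem:ring_identity}'s intended setting, where $a$ and $b$ will be elements of the (possibly noncommutative) unital Banach superalgebra $\uniba$ appearing in the subsequent Propositions~\ref{prop:simultaneous_power_factorization_for_bounded sets} and~\ref{prop:simultaneous_power_factorization_for_commutative_approximate identity}.
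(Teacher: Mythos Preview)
Your proposal is correct. The paper itself omits the proof entirely (it states ``The elementary proof is omitted''), so your telescoping argument and the induction alternative both fill in exactly what the authors presumably had in mind, with no discrepancy to report.
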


%
%

\begin{proposition}\label{prop:simultaneous_power_factorization_for_bounded sets}
Let $\ba$ be a Banach subalgebra of the unital Banach algebra $\uniba$, and suppose that $\ba$ contains a left approximate identity $\ai$ for itself of bound $M\geq 1$.  Let $\ns$ be a normed space, and let $\rba$ be a continuous unital representation of
$\uniba$ on $X$. Suppose that $S$ is a bounded non-empty subset of $\ns$ such that $\lim_\lambda \rba(\e)s=s$ uniformly on $S$.

Choose $r$ such that $0<r<(M+1)^{-1}$, and put $\Delta=(1-r-rM)^{-1}+1>1$.

Then, for every $\epsilon>0$ and for every sequence $\seqk{j}$ of strictly positive integers, there exist sequences $\seqko{b}$ in $\uniba$ and $\{u_k\}_{k=1}^\infty$ in $\bigcup_{\lambda\in\Lambda}\{\,\e\,\}$ such that
\begin{enumerate}
\item $b_0=\unit$, and $b_k\in\inv{\uniba}$ for all $k\geq 0$;
\item $\norm{b_k^{-1}}\leq \Delta^k$ for all $k\geq 0$;
\item $\norm{\rba(b_k^{-j})s-\rba(b_{k-1}^{-j})s}<\frac{\epsilon}{2^k}$ for all $k\geq 1$, $j=1,...,j_k$, and $s\in S$;
\item $b_k=(1-r)^k\unit+\sum_{i=1}^k r(1-r)^{i-1}u_i$ for all $k\geq 0$;
\item for all $k\geq 0$, $b_k^{-1}$ is an element of the unital Banach subalgebra of $\uniba$ that is generated by $\{\,u_1,\ldots,u_k\,\}$;
\item there exists a chain $\lambda_1\leq\lambda_2\leq\lambda_3\leq\ldots$ in $\Lambda$ such that $u_k=e_{\lambda_k}$ for all $k\geq 1$. If $\Lambda$ does not have a largest element, one can require that $\lambda_1<\lambda_2<\lambda_3<\ldots$.
\end{enumerate}

\end{proposition}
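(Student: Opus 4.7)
The approach is to construct $b_k$ and $u_k=e_{\lambda_k}$ by induction on $k\geq 0$, with $b_0:=\unit$ serving as the (trivial) base case. For the inductive step, assuming $b_0,\ldots,b_{k-1}$ and $u_1,\ldots,u_{k-1}$ have been produced satisfying (1)--(6), we set $u_k:=e_{\lambda_k}$, define $b_k$ by formula (4), and must choose $\lambda_k\geq\lambda_{k-1}$ (strictly so when $\Lambda$ has no largest element) to ensure (1)--(3). A key preparatory observation is that one can write $b_{k-1}^{-1}=(1-r)^{-(k-1)}\unit+d_{k-1}$ with $d_{k-1}\in\ba$: indeed, by (4) one has $b_{k-1}=(1-r)^{k-1}\unit+c_{k-1}$ with $c_{k-1}\in\ba$, while by the inductive form of (5) the inverse $b_{k-1}^{-1}$ lies in the unital closed subalgebra of $\uniba$ generated by $u_1,\ldots,u_{k-1}\subset\ba$, so the scalar component is pinned down by the identity $b_{k-1}b_{k-1}^{-1}=\unit$. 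Iterating, $b_{k-1}^{-j}\in\FF\unit+\ba$ for every $j\geq 1$, hence $b_{k-1}^{-j}\e\in\ba$ for every $\lambda\in\Lambda$; Lemma~\ref{lem:uniform_convergence_is_preserved_under_action} therefore gives $\lim_\lambda\rba(\e)\rba(b_{k-1}^{-j})s=\rba(b_{k-1}^{-j})s$ uniformly on $S$.

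For (1) and (2), factor $b_k=(\unit+X_k)b_{k-1}$ where $X_k:=r(1-r)^{k-1}(u_k-\unit)b_{k-1}^{-1}$, and expand using the decomposition above:
\[
X_k=r(u_k-\unit)+r(1-r)^{k-1}(u_k d_{k-1}-d_{k-1}).
\]
The first summand has norm at most $r(M+1)<1$, and since $d_{k-1}\in\ba$ and $\ai$ is a left approximate identity, the norm of the second summand tends to $0$ as $\lambda_k$ grows. Because $r(M+1)<(2-r(M+1))^{-1}$ (equivalent to $(1-r(M+1))^2>0$), for $\lambda_k$ sufficiently large one has $\norm{X_k}\leq(2-r(M+1))^{-1}$, and the geometric series then produces $\norm{(\unit+X_k)^{-1}}\leq(1-r-rM)^{-1}+1=\Delta$. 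Combining this with the inductive bound and $b_k^{-1}=b_{k-1}^{-1}(\unit+X_k)^{-1}$ yields $\norm{b_k^{-1}}\leq\Delta^k$.

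For (3), Lemma~\ref{lem:ring_identity} applied with $a=b_k^{-1}$ and $b=b_{k-1}^{-1}$, together with the resolvent identity $b_k^{-1}-b_{k-1}^{-1}=r(1-r)^{k-1}b_k^{-1}(\unit-u_k)b_{k-1}^{-1}$, gives
\[
b_k^{-j}-b_{k-1}^{-j}=r(1-r)^{k-1}\sum_{i=0}^{j-1}b_k^{-(j-i)}(\unit-u_k)b_{k-1}^{-(i+1)}.
\]
Evaluating $\rba$ at $s\in S$ and inserting the bounds $\norm{b_k^{-m}}\leq\Delta^{km}$, the estimate reduces to making the $j_k$ quantities $\norm{\rba(e_{\lambda_k})\rba(b_{k-1}^{-(i+1)})s-\rba(b_{k-1}^{-(i+1)})s}$ simultaneously small, uniformly in $s\in S$. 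By the preparatory observation this is achievable for $\lambda_k$ sufficiently large, and a suitable choice of the threshold (in terms of $\epsilon$, $k$, $j_k$, $\norm{\rba}$, $\Delta$, $r$) yields the required bound $\epsilon/2^k$.

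Each requirement on $\lambda_k$ is of the form $\lambda_k\geq\mu$ for some $\mu\in\Lambda$, so directedness of $\Lambda$ allows a common $\lambda_k$ to be selected, and one strictly above $\lambda_{k-1}$ when $\Lambda$ has no largest element, giving (6). Property (4) holds by definition of $b_k$, while (5) follows because $b_k^{-1}=b_{k-1}^{-1}(\unit+X_k)^{-1}$ with $(\unit+X_k)^{-1}$ a norm-convergent Neumann series in $X_k$, itself an element of the unital closed subalgebra generated by $u_1,\ldots,u_k$. The main obstacle is to meet the algebra-norm bound $\norm{X_k}\leq(2-r(M+1))^{-1}$ and the representation estimate in (3) simultaneously with the single choice $\lambda_k$; this is made possible precisely because the ``scalar part'' $(1-r)^{-(k-1)}\unit$ of $b_{k-1}^{-1}$ contributes the harmless main term $r(u_k-\unit)$ to $X_k$ whose norm is automatically $<1$, leaving only a correction involving $d_{k-1}\in\ba$ that the left approximate identity damps.
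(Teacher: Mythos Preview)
Your proof is correct and, in fact, takes a genuinely different route from the paper's. The paper introduces the auxiliary function $f(z)=(1-r+rz)^{-1}$ from Proposition~\ref{prop:specific_choice} and factors the candidate $b_{k+1}$ as $f(\e)^{-1}g(\lambda)$, where $g(\lambda)$ is a perturbation of $b_k$ obtained by inserting a factor $f(\e)$ in front of the $\ba$-part of $b_k$; invertibility and the bound $\norm{b_{k+1}^{-1}}\leq\Delta^{k+1}$ then come from $\norm{g(\lambda)^{-1}}\leq\norm{b_k^{-1}}+1$ together with the uniform bound $\norm{f(\e)}\leq(1-r-rM)^{-1}$, and the estimate in part~(3) is controlled via the uniform convergence statements for $\rba(f(\e)^j)$ in Proposition~\ref{prop:specific_choice}. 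Your argument bypasses $f$ entirely: you factor $b_k=(\unit+X_k)b_{k-1}$ and exploit the structural fact that $b_{k-1}^{-1}\in(1-r)^{-(k-1)}\unit+\ba$, which splits $X_k$ into the harmless main term $r(u_k-\unit)$ of norm at most $r(M+1)<(2-r(M+1))^{-1}$ and a remainder in $\ba$ that the approximate identity kills; the neat algebraic identity $(1-(2-r(M+1))^{-1})^{-1}=\Delta$ then gives the inductive bound exactly. Your approach is more elementary in that it uses only Lemma~\ref{lem:uniform_convergence_is_preserved_under_action} and Lemma~\ref{lem:ring_identity}, whereas the paper's approach routes through the analytic-function machinery of Proposition~\ref{prop:specific_choice}; on the other hand, the paper's set-up with $f$ is designed to be reused verbatim in the companion result for commutative $\ai$, which is where it earns its keep. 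One small remark: your claim that the scalar component of $b_{k-1}^{-1}$ is ``pinned down'' by $b_{k-1}b_{k-1}^{-1}=\unit$ tacitly assumes $\unit\notin\ba$; when $\unit\in\ba$ the decomposition is not unique, but one can simply \emph{define} $d_{k-1}:=b_{k-1}^{-1}-(1-r)^{-(k-1)}\unit\in\ba$ and the rest of your argument goes through unchanged.
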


\begin{proof}

We shall use an inductive procedure to construct sequences $\seqko{b}$ and $\{u_k\}_{k=1}^\infty$ satisfying (1) through (6). During this construction, part (6) is then to be interpreted as a requirement for all indices that have been considered so far.

As a preparation, we apply Proposition~\ref{prop:specific_choice} with our chosen $r$. The properties of the net $\fai$ in $\uniba$ that is now available will be used repeatedly in the current proof.

We start the induction with $k=0$. In that case, we need to find only $b_0$, and we choose $b_0=\unit$. Clearly the parts (1), (2), (4), and (5) are then satisfied; the parts (3) and (6) are not applicable for $k=0$.

We turn to $k=1$. Proposition~\ref{prop:specific_choice} asserts that, for all $j\geq 1$,  $\lim_{\lambda}\rba(f(\e)^j)s=s$ uniformly on $S$. Since, for $k=1$ (in fact, for each $k\geq 1$), part (3) involves only finitely many values of $j$, there exists $\lambda_1\in\Lambda$ such that $\norm{\rba(f(e_{\lambda_1})^j)s-s}<\epsilon/2$ for all $j=1,\ldots,j_1$ and $s\in S$. We choose $b_1=f(e_{\lambda_1})^{-1}$ and $u_1=e_{\lambda_1}$. Since $b_0=\unit$, part (3) is now clear. Part (1) is obviously satisfied, and part (2) follows from \eqref{eq:upper_bound_for_f_e_lambda}, which yields that even $\norm{b_1^{-1}}\leq (1-r-rM)^{-1}=\Delta-1$. Since \eqref{eq:inverse_of_f_e_lambda} shows that $b_1 =(1-r)\unit + e_{\lambda_1}$, part(4) is satisfied. Proposition~\ref{prop:specific_choice} yields part (5), and part (6) is trivially satisfied when only one index greater than or equal to 1  has been considered thus far.

With an eye towards the proof of Proposition~\ref{prop:simultaneous_power_factorization_for_commutative_approximate identity} below, we note that the boundedness of $S$ has not been used so far.

We now assume that $k\geq 1$, and that $b_0,\ldots, b_k$ and $u_1,...,u_k$ have been defined such that (1) through (5) hold, and such that (6) holds for all $k_1, k_2$ such that $1\leq k_1\leq k_2\leq k$. This is true for $k=0$ and $k=1$. We shall proceed to find $b_{k+1}$ and $u_{k+1}$. For this, we need a few preparations.

For $\lambda\in\Lambda$, we define
\begin{equation}\label{eq:g_definition}
g(\lambda)=(1-r)^k\unit+f(\e)\sum_{i=1}^k r(1-r)^{i-1}u_i.
\end{equation}
Using part (4) of the induction hypothesis, we see that
\begin{align}\label{eq:g_lambda_close_to_b_k}
\begin{split}
\norm{g(\lambda)-b_k}
&=\norm{\sum_{i=1}^k r(1-r)^{i-1}(f(\e)u_i-u_i)}\\
&\leq\sum_{i=1}^k r(1-r)^{i-1}\norm{f(\e)u_i-u_i}.
\end{split}
\end{align}

Since there are only finitely many values of $i$ in the summation in \eqref{eq:g_lambda_close_to_b_k}, and since $b_k\in\inv{\uniba}$ by part (1) of the inductive hypothesis, we conclude from Proposition~\ref{prop:specific_choice}, using that $\inv{\uniba}$ is open and that the inversion is continuous on $\inv{\uniba}$, that there exists $\lambda^\prime\in\Lambda$ such that both $g(\lambda)\in \inv{\uniba}$ and
\begin{equation}\label{eq:g_lambda_inverse_upper_bound}
\norm{g(\lambda)^{-1}}\leq\norm{b_k^{-1}}+1
\end{equation}
for all $\lambda\geq\lambda^\prime$. Moreover, this can be so arranged that, for $\lambda\geq\lambda^\prime$, $g(\lambda)^{-1}$ can be expressed as a Neumann series, making it clear that it is an element of the unital Banach subalgebra of $\uniba$ that is generated by $b_k^{-1}$ and $g(\lambda)$. Part (5) of the induction hypothesis, together with Proposition~\ref{prop:specific_choice}, then shows that $g(\lambda)^{-1}$ is an element of the unital Banach subalgebra of $\uniba$ that is generated by $\{\,u_1,\ldots, u_k,\e\}$.

For $\lambda\in\Lambda$, we define
\[
\bl=(1-r)^{k+1}\unit+\sum_{i=1}^k r(1-r)^{i-1} u_i+r(1-r)^{k} \e.
\]
Since $f(\e)^{-1}=(1-r)\unit+r\e$ by \eqref{eq:inverse_of_f_e_lambda}, one sees easily that, for all $\lambda\in\Lambda$,
\begin{equation}\label{eq:b_lambda_as_a_product}
\bl=f(\e)^{-1}g(\lambda).
\end{equation}
Therefore, if $\lambda\geq\lambda^\prime$, then $\bl\in\inv{\uniba}$. Since then $\bl^{-1}=g(\lambda)^{-1}f(\e)$, we see from the corresponding statement for $g(\lambda)^{-1}$ and Proposition~\ref{prop:specific_choice} that, for all $\lambda\geq\lambda^\prime$, $\bl^{-1}$ is an element of the unital Banach subalgebra of $\uniba$ that is generated by $\{\,u_1,\ldots, u_k,\e\,\}$.
Furthermore, using \eqref{eq:upper_bound_for_f_e_lambda} and part (2) of the induction hypothesis, we have, for $\lambda\geq\lambda^\prime$,
\begin{align}\label{eq:b_lambda_inverse_upper_bound}
\begin{split}
\norm{\bl^{-1}}&\leq\norm{g(\lambda)^{-1}}\norm{f(\e)}\\
&\leq (\norm{b_k^{-1}}+1)\cdot \frac{1}{1-r-rM}\\
&\leq (\Delta^k+1)\cdot\frac{1}{1-r-rM}\\
&<\frac{1}{1-r-rM}\Delta^k + \Delta\\
&< \frac{1}{1-r-rM}\Delta^k + \Delta^k\\
&=\Delta^{k+1}.
\end{split}
\end{align}

Continuing our preparations,  using Lemma~\ref{lem:ring_identity}, \eqref{eq:b_lambda_inverse_upper_bound}, and part (2) of the induction hypothesis, we see that, for all $j=1,\ldots,j_{k+1}$, $\lambda\geq\lambda^\prime$, and $s\in S$,

\begin{align}\label{eq:application_of_ring_lemma}
\begin{split}
\norm{\rba(\bl^{-j})s-\rba(b_k^{-j})s}&=\norm{\sum_{i=0}^{j-1}\rba\left(\bl^{-(j-1-i)}\right )\rba\left([\bl^{-1} -  b_k^{-1}]b_k^{-i}\right)s}\\
&\leq \norm{\rba}\sum_{i=0}^{j-1}\norm{\bl^{-1}}^{j-1-i}\norm{\rba\left([\bl^{-1} -  b_k^{-1}]b_k^{-i}\right)s} \\
&\leq \norm{\rba}\sum_{i=0}^{j-1}\Delta^{(k+1)(j-1-i)}\norm{\rba\left([\bl^{-1} -  b_k^{-1}]b_k^{-i}\right)s}\\
&\leq \norm{\rba}\sum_{i=0}^{j-1}\Delta^{(k+1)(j-1)}\norm{\rba\left([\bl^{-1} -  b_k^{-1}]b_k^{-i}\right)s}\\
&\leq \norm{\rba}\sum_{i=0}^{j-1}\Delta^{(k+1)(j_{k+1}-1)}\norm{\rba\left([\bl^{-1} -  b_k^{-1}]b_k^{-i}\right)s}\\
&\leq \norm{\rba}\Delta^{(k+1)(j_{k+1}-1)}\sum_{i=0}^{j_{k+1}-1}\norm{\rba\left([\bl^{-1} -  b_k^{-1}]b_k^{-i}\right)s}.
\end{split}
\end{align}
Furthermore, if $\lambda\geq\lambda^\prime$, $i=0,\ldots,j_{k+1}-1$, and $s\in S$, then, using \eqref{eq:b_lambda_as_a_product} and \eqref{eq:g_lambda_inverse_upper_bound}, we have

\begin{align}\label{eq:key_to_uniform_convergence}
\begin{split}
\Vert&\rba\bigl([\bl^{-1}  -  b_k^{-1}]b_k^{-i}\bigr)s\Vert=\norm{\rba\left([g(\lambda)^{-1}f(\e)-b_k^{-1}]b_k^{-i}\right)s}\\
&=\norm{\rba\left(g(\lambda)^{-1}f(\e)b_k^{-i}-g(\lambda)^{-1}b_k^{-i}+g(\lambda)^{-1}b_k^{-i}-b_k^{-1}b_k^{-i}\right)s}\\
&\leq \norm{\rba(g(\lambda)^{-1})}\norm{\rba(f(\e))\rba(b_k^{-i})s-\rba(b_k^{-i})s} + \norm{\rba(g(\lambda)^{-1}-b_k^{-1})\rba(b_k^{-i})s}\\
&\leq \norm{\rba}(\norm{b_k^{-1}}+1)\norm{\rba(f(\e))\rba(b_k^{-i})s-\rba(b_k^{-i})s} + \norm{\rba(g(\lambda)^{-1}-b_k^{-1})}\cdot K,
\end{split}
\end{align}
where
\[
K=\sup\{\,\norm{\rba(b_k^{-i})s} : i=0,\ldots,j_{k+1}-1,\,s\in S\,\}.
\]
We note that $K<\infty$ since $S$ is bounded.

It follows from \eqref{eq:g_lambda_close_to_b_k}, Proposition~\ref{prop:specific_choice}, and the continuity of the inversion on $\inv{\uniba}$ that
\begin{equation}\label{eq:first_term_bounded_S}
\lim_{\lambda\geq\lambda^\prime} \norm{\rba(g(\lambda)^{-1}-b_k^{-1})}\cdot K=0.
\end{equation}
Furthermore, part (5) of the induction hypothesis implies that $b_k^{-i}\e\in A$ for all $i=0, \ldots,j_{k+1}-1$ and $\lambda\in\Lambda$. Since $S$ is bounded, Proposition~\ref{prop:specific_choice} then shows that, for all $i=0,\ldots,j_{k+1}-1$,
\begin{equation}\label{eq:second_term_bounded_S}
\lim_\lambda \norm{\rba(f(\e))\rba(b_k^{-i})s-\rba(b_k^{-i})s} =0
\end{equation}
uniformly on $S$. It is now clear from \eqref{eq:key_to_uniform_convergence}, \eqref{eq:first_term_bounded_S}, and \eqref{eq:second_term_bounded_S} that, for all $i=0,\ldots,j_{k+1}-1$,
\[
\lim_{\lambda\geq\lambda^\prime}\norm{\rba\bigl([\bl^{-1}  -  b_k^{-1}]b_k^{-i}\bigr)s}=0
\]
uniformly on $S$.
Finally, \eqref{eq:application_of_ring_lemma} then shows that, for all $j=1,\ldots,j_{k+1}$,
\[
\lim_{\lambda\geq\lambda^\prime}\norm{\rba(\bl^{-j})s-\rba(b_k^{-j})s}=0
\]
uniformly on $S$. In particular, there exists $\lambda^{\prime\prime}\geq\lambda^\prime$ such that
\begin{equation*}
\norm{\rba(b(\lambda^{\prime\prime})^{-j})s-\rba(b_k^{-j})s}\leq\frac{\epsilon}{2^{k+1}}
\end{equation*}
for all $j=1,\ldots,j_{k+1}$ and $s\in S$. It is clear that $\lambda^{\prime\prime}$ can also be chosen such that, in addition, $\lambda^{\prime\prime}\geq \lambda_k$, or, if $\Lambda$ does not have a largest element, such that $\lambda^{\prime\prime}>\lambda_k$.
The induction step in the construction is then completed by choosing $b_{k+1}=b(\lambda^{\prime\prime})$ and $u_{k+1}=e_{\lambda^{\prime\prime}}$, and where the chain under part (6) is extended by adding $\lambda_{k+1}:=\lambda^{\prime\prime}$.

\end{proof}

As announced, the next result, is almost identical to Proposition~\ref{prop:simultaneous_power_factorization_for_bounded sets}. The only two differences are that $\ai$ is required to be commutative, but that $S$ need not be bounded.

\begin{proposition}\label{prop:simultaneous_power_factorization_for_commutative_approximate identity}

Let $\ba$ be a Banach subalgebra of the unital Banach algebra $\uniba$, and suppose that $\ba$ contains a commutative left approximate identity $\ai$ for itself of bound $M\geq 1$.  Let $\ns$ be a normed space, and let $\rba$ be a continuous unital representation of
$\uniba$ on $X$. Suppose that $S$ is a non-empty subset of $\ns$ such that $\lim_\lambda \rba(\e)s=s$ uniformly on $S$.

Choose $r$ such that $0<r<(M+1)^{-1}$, and put $\Delta=(1-r-rM)^{-1}+1>1$.

Then, for every $\epsilon>0$ and for every sequence $\seqk{j}$ of strictly positive integers, there exist sequences $\seqko{b}$ in $\uniba$ and $\{u_k\}_{k=1}^\infty$ in $\bigcup_{\lambda\in\Lambda}\{\,\e\,\}$ such that
\begin{enumerate}
\item $b_0=\unit$, and $b_k\in\inv{\uniba}$ for all $k\geq 0$;
\item $\norm{b_k^{-1}}\leq \Delta^k$ for all $k\geq 0$;
\item $\norm{\rba(b_k^{-j})s-\rba(b_{k-1}^{-j})s}<\frac{\epsilon}{2^k}$ for all $k\geq 1$, $j=1,...,j_k$, and $s\in S$;
\item $b_k=(1-r)^k\unit+\sum_{i=1}^k r(1-r)^{i-1}u_i$ for all $k\geq 0$;
\item for all $k\geq 0$, $b_k^{-1}$ is an element of the unital Banach subalgebra of $\uniba$ that is generated by $\{\,u_1,\ldots,u_k\,\}$;
\item there exists a chain $\lambda_1\leq\lambda_2\leq\lambda_3\leq\ldots$ in $\Lambda$ such that $u_k=e_{\lambda_k}$ for all $k\geq 1$. If $\Lambda$ does not have a largest element, one can require that $\lambda_1<\lambda_2<\lambda_3<\ldots$.
\end{enumerate}

\end{proposition}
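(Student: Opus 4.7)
The plan is to follow the inductive scheme of the proof of Proposition~\ref{prop:simultaneous_power_factorization_for_bounded sets} almost verbatim, with exactly one substantive modification at the place where boundedness of $S$ entered. The preparatory steps (applying Proposition~\ref{prop:specific_choice} with the chosen $r$), the base cases $k=0$ and $k=1$, and the definition of $g(\lambda)$ and $\bl$ together with their invertibility for $\lambda$ beyond some $\lambda^\prime$, the bound $\norm{\bl^{-1}}\leq\Delta^{k+1}$, and the membership of $\bl^{-1}$ in the commutative subalgebra generated by $\{u_1,\ldots,u_k,\e\}$ are already written in a form that does not use boundedness of $S$; I would reuse them unchanged. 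The authors' own remark after the $k=1$ case (``the boundedness of $S$ has not been used so far'') confirms that this part of the argument transfers. So does the estimate \eqref{eq:application_of_ring_lemma} based on Lemma~\ref{lem:ring_identity}, which only rearranges $\rba(\bl^{-j})s-\rba(b_k^{-j})s$ and uses the uniform bound $\norm{\bl^{-1}}\leq\Delta^{k+1}$. Thus the entire induction step reduces, as before, to showing that
\[
\lim_{\lambda\geq\lambda^\prime}\norm{\rba\bigl([\bl^{-1}-b_k^{-1}]b_k^{-i}\bigr)s}=0 \quad \text{uniformly on } S,
\]
for each $i=0,\ldots,j_{k+1}-1$.

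The key point, and the one place where I would depart from the bounded-$S$ proof, is to replace the two-term decomposition in \eqref{eq:key_to_uniform_convergence}\textemdash which forced the quantity $K=\sup_{i,s\in S}\norm{\rba(b_k^{-i})s}$ to appear and hence required $S$ to be bounded\textemdash by a single algebraic identity that is available because $\ai$ is now commutative. A direct calculation from the explicit formulas for $b_k$ and $\bl$ gives
\[
b_k-\bl = r(1-r)^k(\unit - \e).
\]
Combining this with $\bl^{-1}-b_k^{-1}=\bl^{-1}(b_k-\bl)b_k^{-1}$, and exploiting that, by part (5) of the induction hypothesis, $b_k^{-1},\,\bl^{-1},$ and $\e$ all lie in the commutative unital Banach subalgebra of $\uniba$ generated by $\{\,u_1,\ldots,u_k,\e\,\}$, I would rewrite
\[
[\bl^{-1}-b_k^{-1}]b_k^{-i} = r(1-r)^k\,\bl^{-1}b_k^{-(i+1)}(\unit-\e),
\]
so that $\rba([\bl^{-1}-b_k^{-1}]b_k^{-i})s = r(1-r)^k\,\rba(\bl^{-1}b_k^{-(i+1)})\bigl(s-\rba(\e)s\bigr)$.

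From here the estimate is immediate and, crucially, purely pointwise in $s$: using $\norm{\bl^{-1}}\leq\Delta^{k+1}$ and $\norm{b_k^{-(i+1)}}\leq\Delta^{k(i+1)}\leq\Delta^{kj_{k+1}}$ from part (2) of the induction hypothesis,
\[
\norm{\rba\bigl([\bl^{-1}-b_k^{-1}]b_k^{-i}\bigr)s}\leq r(1-r)^k\norm{\rba}\Delta^{k+1}\Delta^{kj_{k+1}}\norm{s-\rba(\e)s},
\]
and the right-hand side tends to $0$ uniformly on $S$ by hypothesis, uniformly in $i\in\{0,\ldots,j_{k+1}-1\}$ since the prefactor is independent of $i$ after taking the maximum. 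Feeding this back into \eqref{eq:application_of_ring_lemma} yields uniform convergence $\norm{\rba(\bl^{-j})s-\rba(b_k^{-j})s}\to 0$ on $S$ for each $j=1,\ldots,j_{k+1}$, and we choose $\lambda^{\prime\prime}\geq\lambda^\prime$ (strictly greater than $\lambda_k$ when $\Lambda$ has no maximum) so that this quantity is below $\epsilon/2^{k+1}$ and set $b_{k+1}=b(\lambda^{\prime\prime})$, $u_{k+1}=e_{\lambda^{\prime\prime}}$, exactly as before. Verification of parts (1), (2), (4), (5), (6) for $k+1$ is then identical to Proposition~\ref{prop:simultaneous_power_factorization_for_bounded sets}.

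The only genuine obstacle is spotting the identity $b_k-\bl=r(1-r)^k(\unit-\e)$ and realizing that in the commutative setting it permits one to peel off a factor $s-\rba(\e)s$ to which the hypothesis of uniform convergence applies directly. Once that is in place, the rest of the argument requires no new ideas and may be transcribed from the previous proposition.
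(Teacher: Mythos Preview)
Your proof is correct, and at the decisive step it is actually more direct than the paper's.

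Both arguments share the inductive scaffolding, the base cases $k=0,1$, the definitions of $g(\lambda)$ and $\bl$, the invertibility and norm bound $\norm{\bl^{-1}}\leq\Delta^{k+1}$, and the fact that $\bl^{-1}$ lies in the commutative unital subalgebra generated by $\{u_1,\ldots,u_k,\e\}$. The difference lies in how $\bl^{-1}-b_k^{-1}$ is controlled. The paper first uses commutativity in the Lemma~\ref{lem:ring_identity} expansion to move $(\bl^{-1}-b_k^{-1})$ to the right, reducing matters to $\norm{\rba(\bl^{-1}-b_k^{-1})s}$; it then writes $\bl^{-1}=g(\lambda)^{-1}f(\e)$ and splits into two terms, one governed by $\norm{\rba(f(\e))s-s}$ via Proposition~\ref{prop:specific_choice}, and the other obtained by expanding $g(\lambda)^{-1}-b_k^{-1}=g(\lambda)^{-1}b_k^{-1}(b_k-g(\lambda))$, noting that $b_k-g(\lambda)=\sum_i r(1-r)^{i-1}u_i(\unit-f(\e))$, and pulling $(\unit-f(\e))$ to act on $s$. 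You instead observe the cleaner identity $b_k-\bl=r(1-r)^k(\unit-\e)$ directly from the defining formulas, which together with commutativity gives $[\bl^{-1}-b_k^{-1}]b_k^{-i}=r(1-r)^k\,\bl^{-1}b_k^{-(i+1)}(\unit-\e)$ in one stroke; a single application of the original hypothesis $\rba(\e)s\to s$ uniformly on $S$ then suffices, with no detour through $f(\e)$ or a two-term split. The paper's route stays closer in form to the bounded-$S$ proof, where the factorization $\bl=f(\e)^{-1}g(\lambda)$ is genuinely needed; yours exploits the simpler difference $b_k-\bl$ and thereby lets the commutative case stand on its own more transparently.
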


\begin{proof} The proof is an adaptation of the inductive construction in the proof of Proposition~\ref{prop:simultaneous_power_factorization_for_bounded sets}.

As in that proof, we start by applying Proposition~\ref{prop:specific_choice} with our chosen $r$, and we shall work with the net $\fai$ in $\uniba$ that is then available. Note that, as a consequence of Proposition~\ref{prop:specific_choice}, $f(\e)$ and $e_{\tilde\lambda}$ commute for all $\lambda,\tilde\lambda\in\Lambda$.

Exactly as in the proof of Proposition~\ref{prop:simultaneous_power_factorization_for_bounded sets}, we can find $b_0$, $b_1$, and $u_1$ such that parts (1) through (6) are  satisfied for $k=0$ and $k=1$. Indeed, as was already remarked in that proof, for this to be possible the condition that $\lim_\lambda \rba(\e)s=s$ uniformly on $S$ is already sufficient; boundedness of $S$ is not needed.

For the induction step, we assume that $k\geq 1$, and that $b_0,\ldots, b_k$ and $u_1,...,u_k$ have been defined such that (1) through (5) hold, and such that (6) holds for all $k_1, k_2$ such that $1\leq k_1\leq k_2\leq k$. This is true for $k=0$ and $k=1$. We shall proceed to find $b_{k+1}$ and $u_{k+1}$.

As in the earlier proof, we define, for $\lambda\in\Lambda$,
\begin{equation}\label{eq:g_definition_again}
g(\lambda)=(1-r)^k\unit+f(\e)\sum_{i=1}^k r(1-r)^{i-1}u_i.
\end{equation}
As previously, we have
\begin{equation*}
\norm{g(\lambda)-b_k}\leq\sum_{i=1}^k r(1-r)^{i-1}\norm{f(\e)u_i-u_i}
\end{equation*}
for all $\lambda\in\Lambda$, and from this we conclude again that there exists  $\lambda^\prime\in\Lambda$ such that, for all $\lambda\geq\lambda^\prime$, $g(\lambda)\in \inv{\uniba}$,
\begin{equation}\label{eq:g_lambda_inverse_upper_bound_again}
\norm{g(\lambda)^{-1}}\leq\norm{b_k^{-1}}+1,
\end{equation}
and  $g(\lambda)^{-1}$ is an element of the unital Banach subalgebra of $\uniba$ that is generated by $\{\,u_1,\ldots, u_k,\e\,\}$. Note that the latter property, when combined with the commutativity of $\ai$ and part (5) of the induction hypothesis, implies that $g(\lambda)^{-1}$ and $b_k^{-1}$ commute for all $\lambda\geq\lambda^\prime$. Alternatively, and more directly, this commuting property also follows from \eqref{eq:g_definition_again}, Proposition~\ref{prop:specific_choice}, part (4) of the induction hypothesis, and the commutativity of $\ai$.

As earlier, we define, for $\lambda\in\Lambda$,
\begin{equation}\label{eq:b_definition_again}
\bl=(1-r)^{k+1}\unit+\sum_{i=1}^k r(1-r)^{i-1} u_i+r(1-r)^{k} \e.
\end{equation}
As earlier, if $\lambda\geq\lambda^\prime$, then $\bl\in\inv{\uniba}$,  $\bl^{-1}=g(\lambda)^{-1}f(\e)$,
\begin{equation}\label{eq:b_lambda_inverse_upper_bound_again}
\norm{\bl^{-1}}\leq\Delta^{k+1},
\end{equation}
and $\bl^{-1}$ is an element of the unital Banach subalgebra of $\uniba$ that is generated by $\{\,u_1,\ldots, u_k,\e\,\}$. Note that the latter property, when combined with the commutativity of $\ai$ and part (5) of the induction hypothesis, implies that $\bl^{-1}$ and $b_k^{-1}$ commute for all $\lambda\geq\lambda^\prime$. Alternatively, and more directly, this commuting property also follows from \eqref{eq:b_definition_again}, part (4) of the induction hypothesis, and the commutativity of $\ai$.

We shall now exploit the various commuting properties in the estimates that are to follow below. It is at this point that the structure of the present proof begins to differ from that of the proof of  Proposition~\ref{prop:simultaneous_power_factorization_for_commutative_approximate identity}.

For all $\lambda\geq\lambda^\prime$, $j=1,...,j_{k+1}$, and $s\in S$, we have, using
Lemma~\ref{lem:ring_identity} in the first step, the fact that $\bl^{-1}$ and $b_k^{-1}$ commute in the second step, and \eqref{eq:b_lambda_inverse_upper_bound_again} and part (2) of the induction hypothesis in the fifth step,
\begin{align}\label{eq:application_of_ring_lemma_again}
\begin{split}
\norm{\rba(\bl^{-j})s-\rba(b_k^{-j})s}
&=\norm{\rba\left(\sum_{i=0}^{j-1}\bl^{-(j-1-i)}(\bl^{-1}-b_k^{-1})b_k^{-i}\right)s}\\
&=\norm{\rba\left(\sum_{i=0}^{j-1}\bl^{-(j-1-i)}b_k^{-i}(\bl^{-1}-b_k^{-1})\right)s}\\
&\leq\sum_{i=0}^{j-1}\norm{\rba(\bl^{-(j-1-i)}b_k^{-i})}\norm{\rba(\bl^{-1}-b_k^{-1})s}\\
&\leq\sum_{i=0}^{j-1}\norm{\rba}\norm{\bl^{-1}}^{j-1-i}\norm{b_k^{-1}}^{i}\norm{\rba(\bl^{-1}-b_k^{-1})s}\\
&\leq\sum_{i=0}^{j-1}\norm{\rba}\Delta^{(k+1)j-k-1-i}\norm{\rba(\bl^{-1}-b_k^{-1})s}\\
&\leq j_{k+1}\norm{\rba}\Delta^{(k+1)j_{k+1}-k-1}\norm{\rba(\bl^{-1}-b_k^{-1})s}.
\end{split}
\end{align}
Note that, for all $\lambda\geq\lambda^\prime$ and $s\in S$,
\begin{align}\label{eq:split_into_two}
\begin{split}
\norm{\rba\left(\bl^{-1}-b_k^{-1}\right)s}&=\norm{\rba\left(g(\lambda)^{-1}f(\e)-b_k^{-1}\right)s}\\
&\leq\norm{\rba\left(g(\lambda)^{-1}(f(\e)-\unit)\right)s}+\norm{\rba(g(\lambda)^{-1}-b_k^{-1})s}.
\end{split}
\end{align}
We estimate both terms in \eqref{eq:split_into_two} separately. For $\lambda\geq\lambda^\prime$ and $s\in S$, we have, using \eqref{eq:g_lambda_inverse_upper_bound_again},
\begin{equation*}
\norm{\rba\left(g(\lambda)^{-1}(f(\e)-\unit)\right)s}\leq\norm{\pi}(\norm{b_k^{-1}}+1)\norm{\rba(f(\e))s-s}.
\end{equation*}
It then follows from Proposition~\ref{prop:specific_choice} that
\begin{equation}\label{eq:first_term_commutative_approximate_identity}
\lim_{\lambda\geq\lambda^\prime}\norm{\rba\left(g(\lambda)^{-1}(f(\e)-\unit)\right)s}=0
\end{equation}
uniformly on $S$.

The estimate for the second term in \eqref{eq:split_into_two} is slightly more involved. For $\lambda\geq\lambda^\prime$ and $s\in S$ we have, using that $g(\lambda)^{-1}$ commutes with $b_k^{-1}$ in the first step, that $f(\e)$ commutes with $u_i$ for $i=1,\ldots,k$ in the third step, and \eqref{eq:g_lambda_inverse_upper_bound_again} in the fourth step,
\begin{align}\label{eq:second_term_commutative_approximate_identity_preparation}
\begin{split}
\norm{\rba(g(\lambda)^{-1}-b_k^{-1})s}&=\norm{\rba\left(g(\lambda)^{-1}b_k^{-1}(b_k-g(\lambda))\right)s}\\
&=\norm{\rba\left(g(\lambda)^{-1} b_k^{-1}\sum_{i=1}^kr(1-r)^{i-1}(u_i-f(\e)u_i)\right)s}\\\
&=\norm{\rba\left(g(\lambda)^{-1} b_k^{-1}\sum_{i=1}^kr(1-r)^{i-1}u_i(\unit-f(\e))\right)s}\\
&\leq\norm{\rba}(\norm{b_k^{-1}}+1)\norm{b_k^{-1}\sum_{i=1}^kr(1-r)^{i-1}u_i}\norm{s-\rba(f(\e))s}.
\end{split}
\end{align}
Using Proposition~\ref{prop:specific_choice} for the last time, we can now conclude from \eqref{eq:second_term_commutative_approximate_identity_preparation} that
\begin{equation}\label{eq:second_term_commutative_approximate_identity}
\lim_{\lambda\geq\lambda^{\prime}}\norm{\rba(g(\lambda)^{-1}-b_k^{-1})s}=0
\end{equation}
uniformly on $S$. Combining \eqref{eq:split_into_two}, \eqref{eq:first_term_commutative_approximate_identity}, and \eqref{eq:second_term_commutative_approximate_identity}, we see that
\begin{equation}\label{eq:joined_into_one}
\lim_{\lambda\geq\lambda^{\prime}}\norm{\rba\left(\bl^{-1}-b_k^{-1}\right)s}=0
\end{equation}
uniformly on $S$. Finally, combining \eqref{eq:application_of_ring_lemma_again} and \eqref{eq:joined_into_one}, we conclude that, for all $j=1,\ldots,j_{k+1}$,
\[
\lim_{\lambda\geq\lambda^\prime}\norm{\rba(\bl^{-j})s-\rba(b_k^{-j})s}=0
\]
uniformly on $S$. As in the conclusion of the proof of Proposition~\ref{prop:simultaneous_power_factorization_for_bounded sets}, this allows us to find $b_{k+1}$ and $u_{k+1}$ with the required properties.
\end{proof}

We can now establish the main result of this paper. As mentioned earlier, its proof is based on the identical parts (1) through (6) of Propositions~\ref{prop:simultaneous_power_factorization_for_bounded sets} and~\ref{prop:simultaneous_power_factorization_for_commutative_approximate identity}.

\begin{theorem}\label{thm:simultaneous_power_factorization}
Let $\ba$ be a Banach algebra that has a left approximate identity $\ai$ of bound $M\geq 1$, let $\bs$ be a Banach space, and let $\rba$ be a continuous representation of $\ba$ on $\bs$.  Let $S$ be a non-empty subset of $\bs$ such that $\lim_\lambda \rba(\e)s=s$ uniformly on $S$, and suppose that $S$ is bounded or that $\ai$ is commutative.

Choose a unital  Banach superalgebra $\uniba$ of $\ba$ such that $\pi$ extends to a continuous unital representation, again denoted by $\rba$, of $\uniba$ on $\bs$.

Then, for every $\epsilon>0$, every $\delta>0$, every $r$ such that $0<r<(M+1)^{-1}$, every integer $n_0\geq 1$, and  every sequence $\{\alpha_n\}_{n=1}^\infty$ in $(1,\infty)$ such that $\lim_{n\to\infty} \alpha_n=\infty$, there exist $a\in A$ and maps $x_n:S\to \bs$ for $n\geq 1$ such that:

\begin{enumerate}
 \item $s=\rba(a^n)x_n(s)$ for all $n\geq 1$ and $s\in S$;
  \item $\norm{a}\leq M$;

  \item for all $n\geq 1$, $x_n$ is a uniformly continuous homeomorphism of $S$ onto $x_n(S)$, with the restricted map $\rba(a^n):x_n(S)\to S$ as its inverse;
  \item
 \begin{enumerate}
  \item $\norm{s-x_n(s)}\leq \epsilon$ for all $n$ such that $1\leq n\leq n_0$ and for all $s\in S$;
  \item $\norm{x_n(s)}\leq\alpha^n_n \max(\norm{s},\delta)$ for all $n\geq 1$ and $s\in S$;
  \end{enumerate}
  \item
  \begin{enumerate}
  \item if $s_1, s_2\in S$ are such that $s_1+s_2\in S$, then $x_n(s_1+s_2)=x_n(s_1)+x_n(s_2)$ for all $n\geq 1$;
  \item if $\lambda\in\mathbb F$ and $s\in S$ are such that $\lambda s\in S$, then $x_n(\lambda s)=\lambda x_n(s)$ for all $n\geq 1$;
  \end{enumerate}
  \item there exists a sequence $\{u_i\}_{i=1}^\infty$ in $\bigcup_{\lambda\in\Lambda}\{\,\e\,\}$ such that:
  \begin{enumerate}
   \item $a=\sum_{i=1}^\infty r(1-r)^{i-1}u_i$ is an element of the closed convex hull of $\{\,u_i : i\geq 1 \,\}$ in $\ba$;
   \item for every $k\geq 0$, the element $b_k=(1-r)^{k}\unit + \sum_{i=1}^{k}r(1-r)^{i-1}u_i$ of $\uniba$ is an element of the convex hull of  $\{\,\unit, u_1,\ldots,u_k\,\}$ in $\uniba$ that is invertible in $\uniba$, $b_k^{-1}$ is an element of the unital Banach subalgebra of $\uniba$ that is generated by $\{\,u_1,\ldots,u_k\,\}$, and $x_n(s)=\lim_{k\to\infty}\rba(b_k^{-n})s$ for all $n\geq 1$ and $s\in S$;
   \item there exists a chain $\lambda_1\leq\lambda_2\leq\lambda_3\leq\ldots$ in $\Lambda$ such that $u_k=e_{\lambda_k}$ for all $k\geq 1$. If $\Lambda$ does not have a largest element, one can require that $\lambda_1<\lambda_2<\lambda_3<\ldots$;
\end{enumerate}
 \item
 \begin{enumerate}
 \item if $S$ is bounded, then $x_n(S)$ is bounded for all $n\geq 1$;
 \item if $S$ is totally bounded, then $x_n(S)$ is totally bounded for all $n\geq 1$;
 \item $\lim_{\lambda}\rba(\e)x_n(s)=x_n(s)$ uniformly on $S$ for all $n\geq 1$.
  \end{enumerate}
\end{enumerate}
\end{theorem}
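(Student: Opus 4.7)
My plan is to derive Theorem~\ref{thm:simultaneous_power_factorization} uniformly from the two preparatory propositions, whose conclusions~(1)--(6) are identical. So I invoke Proposition~\ref{prop:simultaneous_power_factorization_for_bounded sets} or Proposition~\ref{prop:simultaneous_power_factorization_for_commutative_approximate identity} (as appropriate to the hypothesis) with parameters $\epsilon_1 > 0$ and an integer sequence $\seqk{j}$ to be chosen, and then use the resulting sequences $\seqko{b}$ and $\{u_k\}_{k=1}^\infty$ to define
\[
a := \sum_{i=1}^\infty r(1-r)^{i-1} u_i, \qquad x_n(s) := \lim_{k\to\infty}\rba(b_k^{-n})s.
\]
The series for $a$ converges in $\ba$ with $\norm{a} \leq M$ because $\sum_{i=1}^\infty r(1-r)^{i-1} = 1$, and the same computation shows $b_k \to a$ in $\uniba$. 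Existence of the limit defining $x_n(s)$, and in fact uniform Cauchy-ness on $S$, comes from property~(3) of the proposition as soon as $j_k \geq n$.

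Most of the verification is then essentially bookkeeping. For~(1), pass to the limit in $\rba(b_k^n)\rba(b_k^{-n})s = s$ using $\rba(b_k^n) \to \rba(a^n)$ in operator norm. Statement~(2) is the convex-combination bound for $a$; (5) follows because $x_n$ is a pointwise limit of linear operators $\rba(b_k^{-n})$; and (6) is mostly a restatement of parts~(4)--(6) of the preparatory proposition combined with the very definition of $x_n$. Uniform continuity of $x_n$ in~(3) comes from its being a uniform limit of Lipschitz maps on $S$, while the homeomorphism/inverse assertion is immediate from~(1). Parts~(7a) and~(7b) follow respectively from~(4b) and from uniform continuity applied to a totally bounded $S$.

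The subtle points are~(4a), (4b), and~(7c). For~(4a) I require $j_k \geq n_0$ for every $k \geq 1$ and $\epsilon_1 \leq \epsilon$, so telescoping yields $\norm{x_n(s) - s} \leq \epsilon_1\sum_{k=1}^\infty 2^{-k} \leq \epsilon$ whenever $n \leq n_0$. Property~(4b) is the genuine obstacle: setting $k_0(n) := \min\{k : j_k \geq n\}$, telescoping the Cauchy tail gives
\[
\norm{x_n(s)} \leq \norm{\rba(b_{k_0(n)-1}^{-n})s} + \epsilon_1\, 2^{-(k_0(n)-1)} \leq \norm{\rba}\Delta^{(k_0(n)-1)n}\norm{s} + \epsilon_1,
\]
so the first summand must be dominated by $\tfrac12\alpha_n^n\norm{s}$. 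Since $\alpha_n \to \infty$, the function $\phi(n) := \log(\alpha_n^n/(2\norm{\rba}))/(n\log\Delta)$ tends to $+\infty$; passing to a non-decreasing minorant $\tilde\phi(n)$ and defining $j_k := \max\bigl(n_0,\max\{n : \tilde\phi(n) < k\}\bigr)$ forces $k_0(n) - 1 \leq \tilde\phi(n) \leq \phi(n)$ while preserving $j_k \geq n_0$. The companion choice $\epsilon_1 := \min\bigl(\epsilon,\tfrac{\delta}{2}(\inf_n\alpha_n^n - 1)\bigr)$ is positive (since $\alpha_n^n > 1$ with $\alpha_n^n \to \infty$ forces the infimum to be attained and strictly exceed~$1$) and controls the residual term uniformly in $n$. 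Finally~(7c) is proved by a three-term split of $\rba(\e)x_n(s) - x_n(s)$ in which the middle term $\rba(\e)\rba(b_k^{-n})s - \rba(b_k^{-n})s$ vanishes uniformly in $s \in S$ as $\lambda \to \infty$ for each fixed $k$: in the commutative case it equals $\rba(b_k^{-n})[\rba(\e)s - s]$; in the bounded case one exploits the inductive identity $b_k^{-n} = (1-r)^{-kn}\unit + a'_{k,n}$ with $a'_{k,n} \in \ba$ (which descends from $b_k \in (1-r)^k\unit + \ba$ together with part~(5) of the proposition), rewriting the middle term as $(1-r)^{-kn}[\rba(\e)s - s] + \rba(\e a'_{k,n} - a'_{k,n})s$, whose two summands tend to $0$ uniformly on the bounded set $S$ by the hypothesis and by $\norm{\e a'_{k,n} - a'_{k,n}}_\ba \to 0$, respectively.
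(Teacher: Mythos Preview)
Your overall strategy matches the paper's: apply the preparatory proposition with a carefully chosen integer sequence $\{j_k\}$ and a reduced $\epsilon_1$, define $a$ and $x_n$ as you do, and verify (1)--(7). The treatment of (1)--(3), (5), (6), (7a), and (7b) is fine, and your direct decomposition $b_k^{-n}=(1-r)^{-kn}\unit+a'_{k,n}$ with $a'_{k,n}\in\ba$ for (7c) in the bounded case is a legitimate alternative to the paper's appeal to Lemma~\ref{lem:uniform_convergence_is_preserved_under_action}; it follows from $b_k^{-1}-(1-r)^{-k}\unit=-(1-r)^{-k}b_k^{-1}c_k$ with $c_k=\sum_i r(1-r)^{i-1}u_i\in\ba$ together with part~(5) of the proposition.

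There is, however, a genuine gap in your handling of (4b). Your claimed inequality $k_0(n)-1\leq\tilde\phi(n)\leq\phi(n)$ does hold when $k_0(n)\geq 2$ (if $j_{k_0(n)-1}<n$ then every $m\geq n$ satisfies $\tilde\phi(m)\geq k_0(n)-1$), but it can fail when $k_0(n)=1$: for any $n$ with $\alpha_n^n<2\norm{\rba}$ you have $\phi(n)<0$, whereas $k_0(n)-1\geq 0$ always. Since $\alpha_n$ may be arbitrarily close to $1$ for small $n$ while $\norm{\rba}\geq 1$, this case genuinely arises, and then your bound $\norm{\rba}\Delta^{(k_0(n)-1)n}\norm{s}=\norm{\rba}\,\norm{s}$ need not be dominated by $\tfrac12\alpha_n^n\norm{s}$.

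The fix (which the paper carries out explicitly) is to treat the case $k_0(n)=1$, equivalently $n\leq j_1$, separately: here $b_{k_0(n)-1}=b_0=\unit$, so the first summand is exactly $\norm{s}$ rather than $\norm{\rba}\,\norm{s}$, giving $\norm{x_n(s)}\leq\norm{s}+\epsilon_1$. Your own choice $\epsilon_1\leq\tfrac{\delta}{2}(\inf_m\alpha_m^m-1)\leq(\alpha_n^n-1)\delta\leq(\alpha_n^n-1)\max(\norm{s},\delta)$ then yields $\norm{x_n(s)}\leq\alpha_n^n\max(\norm{s},\delta)$ directly. So the ingredients are all present in your proposal; you just need to split off $k_0(n)=1$ and use $b_0=\unit$ there instead of the crude operator-norm estimate.
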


\begin{remark}\label{rem:Ansatz}Before setting out on the proof, let us make a few comments.
\begin{enumerate}
\item  Certainly a superalgebra $\uniba$ as in the theorem exists: the unitization of $\ba$, which is invariably used in the existing proofs of factorization theorems in the literature, is a possible choice. Hence a simultaneous power factorization also exists under the remaining hypotheses in the theorem, none of which involves $\uniba$.

There may, however, be other superalgebras satisfying the mild extension condition in the theorem. As we shall see in Section~\ref{sec:positive_simultaneous_power_factorization}, it is important to build this freedom of choice into the result. The reason is that part (6) (the only statement in which $\uniba$ figures again) is rather explicit about an actually possible form of a simultaneous power factorization. If, for a suitable choice of $\uniba$, one has additional information about the elements $b_k^{-n}$, then one has additional information about an actually possible form of a simultaneous power factorization. See Remark~\ref{rem:centralizer_algebras} for such candidate alternate superalgebras, and Theorem~\ref{thm:positive_simultaneous_power_factorization_for_banach_algebras_of_functions} for an application of the current observation.

\item With some computational perseverance, the information as provided by part (6) could lead to an explicit simultaneous power factorization in a given context. After all, one only needs to find a suitable chain $\lambda_1\leq\lambda_2\leq\lambda_3\cdots$ in $\Lambda$. We shall carry out an example of such a construction of a factorization `by hand' in Section~\ref{sec:worked_example}.

The whole proof of Theorem~\ref{thm:simultaneous_power_factorization} is, in fact, constructive, although to actually find a factorization in a concrete case one would perhaps rather start from the Ansatz as provided by part (6) than go through the estimates in the proof again.

\item Part (7) implies that the theorem can be applied repeatedly. As a consequence, if $p_1,\ldots, p_t\geq 1$ is a given set of exponents, then there exist $a_1,\ldots,a_t\in A$ and a uniformly continuous homeomorphism $x_{p_1,\ldots,p_t}:S\to x_{p_1,\ldots,p_t}(S)$ such that $s=\rba(a_1^{p_1}\cdots a_t^{p_t}) x_{p_1,\ldots,p_t}(s)$ for all $s\in S$. This is obtained by an application of the theorem to $S$ for $n=p_1$, then to $x_{p_1}(S)$ for $n=p_2$, etc.

\item
Suppose that $\Lambda$ has a largest element $\largestlambda$. Since the proof of Theorem~\ref{thm:simultaneous_power_factorization} essentially consists of repeatedly extending a chain in $\Lambda$ by a sufficiently large element of $\Lambda$, the choice $\lambda_k=\largestlambda$ for all $k\geq 1$, implying that $u_k=\elargest$ for all $k\geq 1$, must give a factorization satisfying parts (1) through (7). Indeed it does, and we shall now convince ourselves of this fact, which is still not entirely trivial.

Fist of all, it is easy to see that $\elargest$ must be a left identity element for $\ba$, and that $\elargest$ must act as the identity on $S$. Part (6a) stipulates that $a=\elargest$. An easy induction with respect to $k$ shows that, with this choice of the $u_i$, we have the factorization
\begin{equation}\label{eq:maximal_element_factorization}
b_k=(1-r)^k\unit+\sum_{i=1}^k r(1-r)^{i-1}\elargest=((1-r)\unit + r\elargest)^k
\end{equation}
for all $k\geq 1$. Since $0<r<(M+1)^{-1}$, $((1-r)\unit + r\elargest)$ is invertible in $
\uniba$. Indeed,
\begin{equation}\label{eq:neumann_series_for_inverse}
((1-r)\unit + r\elargest)^{-1}=\frac{1}{1-r}\sum_{j=0}^\infty \left(\frac{r}{r-1}\right)^j \elargest^j,
\end{equation}
where the series is absolutely convergent because $|rM/(r-1)|=rM/(1-r)<1$ since $0<r<(M+1)^{-1}$. Hence $b_k$ is also invertible in $\uniba$ for all $k\geq 1$, as it should be according to part (6b). Furthermore, it follows from \eqref{eq:neumann_series_for_inverse} that $((1-r)\unit + r\elargest)^{-1}$ acts as the identity on $S$. The same is then true for $b_k^{-n}$ for all $k\geq 1$ and $n\geq 1$, in which case part (6b) insists that $x_n(s)=\lim_{k\to\infty}\rba(b_k^{-n})s=s$ for all $n\geq 1$ and $s\in S$. Since $a$ acts as the identity on $S$, this is compatible with part (1), as it should be. A quick inspection now shows that, in fact, all properties in the parts (1) through (7) are satisfied.

The only case of true interest is, therefore, when $\Lambda$ does not have a largest element, and part (6c) shows that one may then assume that $\lambda_1<\lambda_2<\lambda_3<\ldots$.

Similar remarks apply to Propositions~\ref{prop:simultaneous_power_factorization_for_bounded sets} and \ref{prop:simultaneous_power_factorization_for_commutative_approximate identity}.

\item A still more precise result is available as Theorem~\ref{thm:positive_simultaneous_power_factorization}; see also Remark~\ref{rem:ordered_theorem_implies_original_theorem}.

\item In the monograph \cite[Theorem~2.9.24]{Dalesbook}, a proof of the pointwise power factorization theorem is given under the assumption that the sequence $\{\alpha_n\}_{n=1}^\infty$ is increasing, but this extra condition is not necessary. Unfortunately, the proof in \cite{Dalesbook} has an error at one point:  on page 313, line~-6, the factorization of $a_{k+1}$ is used to obtain a factorization of $a_{k+1}^{-j}$ as $g(u_{k+1})^{-j} f(u_{k+1})^{j}$, but this is  only sure when the algebra is commutative. The original proof of Allan and Sinclair is a little more complicated, but avoids this problem by using Lemma~\ref{lem:ring_identity}. 
\end{enumerate}
\end{remark}

\begin{proof}[Proof of Theorem~\ref{thm:simultaneous_power_factorization}]
We may assume that  $\delta\leq 1$ and that
\begin{equation}\label{eq:epsilon_assumption}
\epsilon\leq\inf\{\,\delta,(\alpha_n^n-1)\delta: n\geq 1\,\},
\end{equation}
since the right hand side of this inequality is strictly positive due to the properties of $\{\alpha_n\}_{n=1}^\infty$.

We start by choosing a strictly increasing sequence of integers $\seqk{j}$ such that $j_1\geq n_0$ and
\begin{equation}\label{eq:sequence_construction}
\alpha_n\geq 1+\norm{\rba}\Delta^k
\end{equation}
for all $k\geq 1$ and $n\geq j_k$; here $\norm{\rba}$ is the norm of the representation of $\uniba$ on $\bs$. Since $\lim_{n\to\infty}\alpha_n= \infty$, this is possible. We can now apply Propositions~\ref{prop:simultaneous_power_factorization_for_bounded sets} and~\ref{prop:simultaneous_power_factorization_for_commutative_approximate identity} for our given $\epsilon$ and the sequence $\seqk{j}$ as just constructed. This yields sequences $\{b_k\}_{k=0}^\infty$ in $\uniba$ and $\{u_k\}_{k=1}^\infty$ in $\bigcup_{\lambda\in\Lambda}\{\,\lambda\,\}$ with the properties in the parts (1) through (6) of these two propositions. In particular, part (6c) of the present theorem is satisfied.

Using the notation of these two propositions, we let $a=\sum_{i=1}^\infty r(1-r)^{i-1} u_i$, which is as required in part (6a) of the present theorem. This is indeed a well-defined element of $\ba$, since $\sum_{i=1}^\infty\norm{r(1-r)^{i-1} {u_i}}\leq\sum_{i=1}^\infty r(1-r)^{i-1} M=M$.  We also see that $\norm{a}\leq M$, which is part (2) of the present theorem. Note that the parts (4) of the two propositions imply that
\begin{equation}\label{eq:a_equals_b_k_limit}
\lim_{k\to\infty}b_k=a.
\end{equation}

Fix $n\geq 1$. Since $\lim_{k\to\infty}j_k=\infty$, we can choose $k_0\geq 1$ such that $1\leq n\leq j_{k}$ for all $k\geq k_0$. The parts (3) of the two propositions then yield that $\norm{\rba(b_k^{-n})s-\rba(b_{k-1}^{-n})s}<\frac{\epsilon}{2^k}$ for all $k\geq k_0$ and $s\in S$. A telescoping argument subsequently shows that
\begin{equation}\label{eq:Cauchy_sequence}
\norm{\rba(b_m^{-n})s-\rba(b_{l}^{-n})s}<\frac{\epsilon}{2^{l}}
\end{equation}
for all $m\geq l\geq  k_0-1$ and $s\in S$.
This implies that, for all $s\in S$, $\{\rba(b_k^{-n})s\}_{k=1}^\infty$ is a Cauchy sequence in $\bs$. As $\bs$ is now assumed to be a Banach space, we can define
\begin{equation}\label{eq:pointwise_limit}
x_n(s)=\lim_{k\to\infty}\rba(b_k^{-n})s
\end{equation}
for $s\in S$. The `linearity' of $x_n$ in part (5) of the present theorem then follows from~\eqref{eq:pointwise_limit}, and part (6b) of the present theorem now follows from \eqref{eq:pointwise_limit} and the parts (5) of the two propositions.
Since $s=\rba(b_k^n)[\rba(b_k^{-n})s]$ for all $k\geq 1$ and $s\in S$, we see from \eqref{eq:a_equals_b_k_limit} and \eqref{eq:pointwise_limit} that $s=\rba(a^n)x_n(s)$ for all $s\in S$, which is part (1) of the present theorem.

Continuing, we note that it follows from
\eqref{eq:Cauchy_sequence} that
\begin{equation}\label{eq:uniform_limit}
\norm{x_n(s)-\rba(b_{l}^{-n})s}\leq \frac{\epsilon}{2^{l}}
\end{equation}
all $l\geq k_0-1$ and $s\in S$. Since, for all $l\geq k_0-1$, the map $s\mapsto \rba(b_l^{-n})s$ is uniformly continuous on $S$, we conclude from \eqref{eq:uniform_limit} that $x_n$, being a uniform limit of uniformly continuous maps on $S$, is uniformly continuous on $S$. Combining this with the already established relation in part (1), part (3) of the present theorem is now clear.

The parts (1), (2), (3), (5), and (6) of the present theorem have now been established, and we turn to part (4a).

If $1\leq n\leq j_1$, then we can choose $k_0=1$ in the preceding argument, so that \eqref{eq:uniform_limit} holds for $l=k_0-1=0$. Since we also  know from the parts (1) of the two propositions that $b_0=\unit$, we therefore see that 
\begin{equation}\label{eq:x_n_close_to_id_for_first_j_1}
\norm{x_n(s)-s}\leq \epsilon
\end{equation}
for all $n=1,\ldots,j_1$ and $s\in S$. As $n_0\leq j_1$ by the choice of $j_1$, part (4a) of the present theorem has now been established.

We now consider part (4b) of the present theorem, and for this we distinguish two cases.

The first case is where $1\leq n\leq j_1$.  According to \eqref{eq:x_n_close_to_id_for_first_j_1}, we then know that $\norm{x_n(s)}
\leq\epsilon+\norm{s}$ for all $s\in S$. We now distinguish two subcases. If $s\in S$ and $\norm{s}\leq\delta$, then we have, for $n=1,\ldots,j_1$, using \eqref{eq:epsilon_assumption},
\begin{align*}
\norm{x_n(s)}
&\leq\epsilon+\norm{s}\\
&\leq(\alpha_n^n-1)\delta +\delta\\
&=\alpha_n^n\delta\\
&=\alpha_n^n\max(\norm{s},\delta).
\end{align*}
If $s\in S$ and $\norm{s}>\delta$, then we have, for $n=1,\ldots,j_1$, using \eqref{eq:epsilon_assumption} again,
\begin{align*}
\norm{x_n(s)}
&\leq\epsilon+\norm{s}\\
&\leq(\alpha_n^n-1)\delta +\norm{s}\\
&<(\alpha_n^n-1)\norm{s} +\norm{s}\\
&=\alpha_n^n\norm{s}\\
&=\alpha_n^n\max(\norm{s},\delta).
\end{align*}

This establishes part (4b) for the first case, where $n=1,\ldots, j_1$.  We turn to the second case, where  $n>j_1$. Since $\seqk{j}$ was chosen to be strictly increasing, there exists $k^\prime\geq 1$ such that $j_{k^\prime}+1\leq n\leq j_{k^\prime+1}$. Since $n\geq j_{k^\prime}$, we see from \eqref{eq:sequence_construction} that
\begin{equation}\label{eq:lower_bound_for_alpha_n_again}
\alpha_n\geq 1+\norm{\rba}\Delta^{k^\prime}.
\end{equation}
On the other hand, since $n\leq j_{k^\prime+1}$ and $\seqk{j}$ is strictly increasing, we can use our argument above for the choice $k_0=k^\prime+1$. Then \eqref{eq:uniform_limit} is valid for $l=(k^\prime+1)-1=k^\prime$ and all $s\in S$, so that we see that, for all $s\in S$,
\begin{equation}\label{eq:x_n_estimate}
\norm{x_n(s)-\rba(b_{k^\prime}^{-n})s}\leq \frac{\epsilon}{2^{k^\prime}}.
\end{equation}
Using the parts (2) of the two propositions, \eqref{eq:epsilon_assumption},  \eqref{eq:lower_bound_for_alpha_n_again}, and the fact that $\norm{\rba}\geq 1$ because $\rba$ is a unital representation of $\uniba$, we conclude from \eqref{eq:x_n_estimate}  that, for all $s\in S$,
\begin{align*}
\norm{x_n(s)} & \leq \frac{\epsilon}{2^{k^\prime}}+\norm{\rba(b_{k^\prime}^{-n})s}\\
&\leq \frac{\epsilon}{2^{k^\prime}} + \norm{\rba}(\Delta^{k^\prime})^n\norm{s}\\
&\leq \delta + \norm{\rba}\Delta^{k^\prime n}\norm{s}\\
&\leq \max(\norm{s},\delta) + \norm{\rba}\Delta^{k^\prime n}\max(\norm{s},\delta)\\
&\leq \left(1 + \norm{\rba}^n\Delta^{k^\prime n}\right)\max(\norm{s},\delta)\\
&\leq (1+\norm{\rba}\Delta^{k^\prime})^n\max(\norm{s},\delta)\\
&\leq \alpha_n^n\max(\norm{s},\delta).
\end{align*}
The second case, where $n>j_1$, has now been covered, and part (4b) of the present theorem has now been established.

Finally, we consider part (7) of the present theorem. Part (7a) is clear from part (4b). For part (7b), fix $n\geq 1$, and let $\eta>0$ be given. Using \eqref{eq:uniform_limit}, we can choose $l_0\geq 1$ such that $\norm{x_n(s)-\rba(b_{l_0}^{-n})s}\leq\eta/3$ for all $s\in S$. If $\bs=\{\,0\,\}$, then all is clear. If $\bs\neq\{\,0\,\}$, then $\rba(b_{l_0}^{-n})\neq 0$, and there exist $t\geq 1$ and  $s_1,\ldots,s_t\in S$ such that $S\subset\bigcup_{i=1}^t \{\,x\in\bs : \norm{x-s_i}<\eta/(3\norm{\rba(b_{l_0}^{-n})})\,\}$. Let $s\in S$. Then $\norm{s-s_{i_0}}<\eta/(3\norm{\rba(b_{l_0}^{-n})})$ for some $i_0$ such that $1\leq i_0\leq t$, and this implies that 
\begin{align*}
\norm{x_n(s)-x_n(s_{i_0})}&\leq\norm{x_n(s)-\rba(b_{l_0}^{-n})s}\!+\! \norm{\rba(b_{l_0}^{-n})(s-s_{i_0})}\! +\! \norm{\rba(b_{l_0}^{-n})s_{i_0}-x_n(s_{i_0})}\\&<\eta/3 +\eta/3 +\eta/3 \\&= \eta.
\end{align*}  Hence $x_n(S)$ is totally bounded.

For part (7c), fix $n\geq 1$, and let $\eta>0$ be given. We first deal with  the case where $S$ is bounded.
From \eqref{eq:uniform_limit}, we see that there exists $l_0\geq 1$ such that $\norm{\rba}M\norm{x_n(s)-\rba(b_{l_0}^{-n})s}<\eta/3$ for all $s\in S$; this implies that $\norm{x_n(s)-\rba(b_{l_0}^{-n})s}<\eta/3$  for all $s\in S$. Since $S$ is bounded, part (6b) of the present theorem and Lemma~\ref{lem:uniform_convergence_is_preserved_under_action}  imply that there exists $\lambda^\prime\in\Lambda$ such that $\norm{\rba(\e)\rba(b_{l_0}^{-n})s-\rba(b_{l_0}^{-n})s}<\eta/3$ for all $\lambda\geq\lambda^\prime$ and $s\in S$. We then have, for all $\lambda\geq\lambda^\prime$ and $s\in S$,
\begin{align*}
\norm{\rba(\e)x_n(s)-x_n(s)}& \leq \norm{\rba(\e)x_n(s)-\rba(\e)\rba(b_{l_0}^{-n})s} \\
&\quad + \norm{\rba(\e)\rba(b_{l_0}^{-n})s-\rba(b_{l_0}^{-n})s}\\
&\quad + \norm{\rba(b_{l_0}^{-n})s-x_n(s)}\\
& < \norm{\rba} M\norm{x_n(s)-\rba(b_{l_0}^{-n})s}\\
&\quad  + \norm{\rba(\e)\rba(b_{l_0}^{-n})s-\rba(b_{l_0}^{-n})s}\\
&\quad  + \norm{\rba(b_{l_0}^{-n})s-x_n(s)}\\
&<\eta/3 + \eta/3 +\eta/3\\
&=\eta.
\end{align*}
If $\ai$ is commutative, we start again by choosing $l_0\geq 1$ with the property that $\norm{\rba}M\norm{x_n(s)-\rba(b_{l_0}^{-n})s}<\eta/3$ for all $s\in S$, but now we observe that there exists $\lambda^\prime$ such that $\norm{\rba(b_{l_0}^{-n})}\norm{\rba(\e)s-s}<\eta/3$ for all $\lambda\geq\lambda^\prime$. We then have, for all $\lambda\geq\lambda^\prime$ and $s\in S$,
\begin{align*}
\norm{\rba(\e)x_n(s)-x_n(s)}&\leq \norm{\rba(\e)x_n(s)-\rba(\e)\rba(b_{l_0}^{-n})s} \\
&\quad + \norm{\rba(\e)\rba(b_{l_0}^{-n})s-\rba(b_{l_0}^{-n})s}\\
&\quad + \norm{\rba(b_{l_0}^{-n})s-x_n(s)}\\
& \leq \norm{\rba} M\norm{x_n(s)-\rba(b_{l_0}^{-n})s}\\
&\quad + \norm{\rba(b_{l_0}^{-n})\rba(\e)s-\rba(b_{l_0}^{-n})s}\\
&\quad + \norm{\rba(b_{l_0}^{-n})s-x_n(s)}\\
&<\eta/3 +  \norm{\rba(b_{l_0}^{-n})}\norm{\rba(\e)s-s} +\eta/3\\
&<2\eta/3 +\eta/3\\
&=\eta.
\end{align*}

This completes the proof.
\end{proof}

\begin{remark}\label{rem:trick}
If $S$ is bounded and $S\neq\{\,0\,\}$, then one can apply Theorem~\ref{thm:simultaneous_power_factorization} with $\delta=\sup_{s\in S}\norm{s}$ to see that a simultaneous power factorization is possible where
\begin{equation}\label{eq:uniform_bound}
\norm{x_n(s)}\leq \alpha_n^n \sup_{s\in S}\norm{s}
\end{equation}
for all $n\geq 1$ and $s\in S$; this is also possible if $S=\{\,0\,\}$. If $S$ consists of one point, then one obtains the upper bounds that are already in \cite[Theorem~1]{Allan1976}. It can be argued, see \cite[Remark~(i) on p.~37]{Allan1976}, that these upper bounds are then essentially the best possible.

It should be mentioned here that, for bounded $S$, the variation of Theorem~\ref{thm:simultaneous_power_factorization} where (4b) is replaced with \eqref{eq:uniform_bound} can also be obtained rather directly from the power factorization for one point (see \cite[Theorem~1]{Allan1976}) and inspection of the proof thereof.  We shall now describe this. The argument, the idea of which will be generalized in Theorem~\ref{thm:simultaneous_power_factorization_for_maps}, is a slightly improved version of the one given in \cite[p.~115-116]{doran} resp.\ \cite[proof of Corollary~5.2.3.(b)]{Palmer_I}, where a simultaneous non-power factorization for bounded resp.\ compact $S$ is established.

With the assumptions as in Theorem~\ref{thm:simultaneous_power_factorization}, we let $\tilde \bs$ be the space of all bounded uniformly continuous maps $f: S\to \bs$, supplied with the supremum norm. A moment's thought shows that $\tilde\bs$ is a Banach space under pointwise operations, and that there is a natural continuous representation of $\ba$ on $\tilde\bs$ by pointwise action.

Define $\mathrm{id}_S:S\to \bs$ by $\mathrm{id}_S(s)=s$ for all $s\in S$. Then $\mathrm{id}_S\in \tilde\bs$, since $S$ is bounded. Furthermore, for $\lambda\in\Lambda$, we have
\begin{equation}\label{eq:space_of_maps}
\norm{\tilde\rba(\e)\mathrm{id}_S-\mathrm{id}_S}=\sup_{s\in S}\norm{\rba(\e)s-s}.
\end{equation}
Since we have assumed that $\lim_{\lambda\in\Lambda}\rba(\e)s=s$ uniformly on $S$, we see from \eqref{eq:space_of_maps} that $\mathrm{id}_S$ is in the essential subspace for the action of $\ba$ on $\tilde\bs$ via $\tilde\rba$. Therefore, we can apply \cite[Theorem~1]{Allan1976} (and its proof), and this yields the variation of Theorem~\ref{thm:simultaneous_power_factorization} where part (4b) has been replaced with \eqref{eq:uniform_bound}.

The stronger pointwise estimates in part (4b), however, do not seem to be attainable in this fashion, and clearly this whole set-up breaks down if $S$ is not bounded.
\end{remark}

\begin{remark}\label{rem:no_bounded_x_n} If $0\notin\overline{S}$, then one can apply  Theorem~\ref{thm:simultaneous_power_factorization} with $\delta=\inf_{s\in S}\norm{s}>0$. In that case, part (4b) yields that
\[
\norm{x_n(s)}\leq\alpha_n^n\norm{s}
\]
for all $n\geq 1$ and $s\in S$. Hence the maps $x_n:S\to\bs$ are bounded (in the usual operator sense of the word) on $S$ for all $n\geq 1$. It is natural to ask whether this could actually be valid for general $S$. The following example shows that this is not the case, not even for general commutative $\ba$ and bounded $S$.

As in Examples~\ref{ex:first_appearance} and~\ref{ex:second_appearance}, we  consider $\ba=\conto{\mathbb R}$ and the left regular representation of $\ba$, but now with a different $S$. Choose $f_0\in\conto{\mathbb R}$ such that $f_0(t)>0$ for all $t\in\mathbb R$, and let $S^\prime=\{\,f\in\conto{\mathbb R} : 0\leq f(t)\leq f_0(t)\textup{ for all }t\in\mathbb R\,\}$. We know from Example~\ref{ex:first_appearance} that there exists a bounded left approximate identity $\{e_n\}_{n=1}^\infty$ for $\conto{\RR}$ such that $\lim_{n\to\infty}\norm{e_nf-f}=0$ uniformly for $f$ in a subset $S$ of $\conto{\RR}$ containing $S^\prime$, so certainly this is true for $S^\prime$. Hence Theorem~\ref{thm:simultaneous_power_factorization} is applicable.

However, for every $n\geq 1$, there cannot exist $a\in \conto{\mathbb R}$, $C>0$, and a map $x_n:S^\prime\to\conto{\mathbb R}$ such that, for all $f\in S^\prime$, $f=a^n x_n(f)$ and $\norm{x_n(f)}\leq C\norm{f}$. To see this, we argue by contradiction.  Let us assume that these objects exist. First of all, since, in particular, $f_0=a^n x_n(f_0)$, and since $f_0$ has no zeros, we see that $a$ has no zeros. Thus $x_n(f)=a^{-n}f$ for all $f\in S^\prime$. Fix $t_0\in\mathbb R$, and choose a non-zero $f_{t_0}\in S^\prime$ such that $\norm{f_{t_0}}=f_{t_0}(t_0)$; this is possible since $f_0$ is strictly positive in every point. Then
\begin{equation*}
|a^{-n}(t_0)f_{t_0}(t_0)|=|[x_n(f_{t_0})](t_0)|\leq\norm{x_n(f_{t_0})}\leq C\norm{f_{t_0}}=C f_{t_0}(t_0).
\end{equation*}
Since $f_{t_0}(t_0)=\norm{f_{t_0}}\neq 0$, we conclude that $|a^{-n}(t_0)|\leq C$ for all $t_0\in\mathbb R$. This leads to $|a(t_0)|\geq C^{-1/n}$ for all $t_0\in\mathbb R$, contradicting that $a\in\conto{\mathbb R}$.
\end{remark}

\begin{remark}
Part (7) of Theorem~\ref{thm:simultaneous_power_factorization} shows that all sets $x_n(S)$ for $n\geq 1$ inherit crucial properties from $S$. The converse is also true. In fact, one single $n$ (which we can take to be equal to 1) already suffices. More specifically, suppose that $S\subset \bs$ is such that there exist $a\in A$ as in part (6a) of Theorem~\ref{thm:simultaneous_power_factorization}, and a map $x_1:S \to\bs$ such that $s=\rba(a)x_1(s)$ for all $s\in S$ and $\lim_{\lambda}\rba(\e)x_1(s)=x_1(s)$ uniformly on $S$. If $x_1(S)$ is bounded as in part (7a), or totally bounded as in part (7b), then the same holds for $S=\rba(a)(x_1(S))$. In order to show that the uniform convergence on $S$ follows from part (7c), we distinguish two cases. If $x_1(S)$ is bounded, then the fact that  $\norm{\rba(\e)s-s}=\norm{\rba(\e)\rba(a)x_1(s)-\rba(a)x_1(s)}\leq\norm{\rba}\norm{\e a-a}(\sup_{s\in S}\norm{x_1(s)})$ implies evidently that $\lim_{\lambda}\rba(\e)s=s$ uniformly on $S$.
If $\ai$ is commutative, then we note that
\begin{align*}
\norm{\rba(\e)s-s}&=\norm{\rba(\e)\rba(a)x_1(s)-\rba(a) x_1(s)}\\
&=\norm{\rba(a)\rba(\e)x_1(s)-\rba(a) x_1(s)}\\
&\leq \norm{\rba(a)}\norm{\rba(\e)x_1(s)-x_1(s)}
\end{align*}
in order to conclude that $\lim_{\lambda}\rba(\e)s=s$ uniformly on $S$; here part (6a) is used in the second step.

A similar observation, relating the possibility of simultaneous non-power factorization of a set, uniform convergence on the set, and uniform convergence on the factor set, can already be found for uniformly bounded subsets of $\ess{\bs}$ in \cite[Theorem~2.1]{SentillesTaylor}.
\end{remark}

For the sake of completeness, we also include the following result, showing that a simultaneous power factorization can also be valid on subsets of $\bs$ on which there need not be any uniform convergence at all.

\begin{corollary}\label{cor:simultaneous_power_factorization_for_countable_union_of_compact_subsets}
Let $\ba$ be a Banach algebra with a bounded left approximate identity of bound $M\geq 1$, and let $\rba$ be a continuous representation of $\ba$ on the Banach space $\bs$. Suppose that $S=\bigcup_{l=1}^\infty K_l$ is the countable union of non-empty compact subsets $K_l$ of $\ess{\bs}$. Then there exist $a\in \ba$ such that $\norm{a}\leq M$, and, for all $n\geq 1$, a subset $\bs_n$ of $\ess{\bs}$ such that $S=\rba(a^n)\bs_n$.
\end{corollary}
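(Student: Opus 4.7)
The plan is to reduce to Theorem~\ref{thm:simultaneous_power_factorization} by rescaling the compact pieces so that their union becomes totally bounded, thereby securing the hypothesis of uniform convergence via Lemma~\ref{lem:totally_bounded_subsets}. The essential observation is that Theorem~\ref{thm:simultaneous_power_factorization} delivers a single factorizing element $a$ together with factor maps $x_n$ that respect scalar multiplication (part~(5)) and take values in the essential subspace (part~(7c)); consequently, a factorization of a compressed copy of $S$ transfers back to a factorization of $S$ itself.

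Concretely, for each $l\geq 1$ set $r_l = \sup_{x\in K_l}\norm{x}<\infty$ (by compactness) and pick scalars $c_l\geq\max(1,\, l r_l)$. Define
\[
T = \{0\}\cup\bigcup_{l=1}^\infty \frac{1}{c_l} K_l\subset\ess{\bs}.
\]
For every $\epsilon>0$, choose $l_0$ with $1/l_0<\epsilon$; then $(1/c_l)K_l\subset B(0,\epsilon)$ for all $l>l_0$, whereas $\bigcup_{l=1}^{l_0}(1/c_l)K_l$ is compact as a finite union of continuous images of compact sets. Hence $T$ is totally bounded, and by Lemma~\ref{lem:totally_bounded_subsets} we have $\lim_\lambda \rba(\e)t=t$ uniformly on $T$.

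Now apply Theorem~\ref{thm:simultaneous_power_factorization} (in the bounded branch) to the set $T$. This yields $a\in\ba$ with $\norm{a}\leq M$ and, for each $n\geq 1$, a map $x_n:T\to\bs$ with $t=\rba(a^n)x_n(t)$ for all $t\in T$; moreover, part~(7c) gives $\lim_\lambda\rba(\e)x_n(t)=x_n(t)$ uniformly on $T$, so in particular $x_n(t)\in\ess{\bs}$ for every $t\in T$. For each $s\in S$, select $l(s)\geq 1$ with $s\in K_{l(s)}$; since $s/c_{l(s)}\in T$, we obtain
\[
s = c_{l(s)}\,\rba(a^n)\,x_n\!\bigl(s/c_{l(s)}\bigr) = \rba(a^n)\bigl(c_{l(s)}\,x_n(s/c_{l(s)})\bigr).
\]
Setting $\bs_n=\{\,c_{l(s)}\,x_n(s/c_{l(s)}) : s\in S\,\}\subset\ess{\bs}$ (using that $\ess{\bs}$ is a linear subspace) gives $S=\rba(a^n)\bs_n$, as required.

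There is no serious obstacle here; the only point that needs a moment's attention is picking the scaling constants $c_l$ aggressively enough to force $T$ to be totally bounded even though the original sets $K_l$ can have arbitrarily large diameters. Everything else is bookkeeping: uniform convergence on $T$ comes for free from Lemma~\ref{lem:totally_bounded_subsets}, the single element $a$ is what Theorem~\ref{thm:simultaneous_power_factorization} produces regardless of the exponent $n$, and membership of $\bs_n$ in $\ess{\bs}$ follows from part~(7c) together with the closure of $\ess{\bs}$ under scalar multiplication.
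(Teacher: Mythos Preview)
Your proof is correct and follows essentially the same route as the paper, which only sketches the argument by pointing to ``a concrete and simple transformation of the picture'' from \cite[proof of Corollary~5.2.3.(a)]{Palmer_I} combined with Theorem~\ref{thm:simultaneous_power_factorization}; your rescaling of each $K_l$ by $1/c_l$ to force total boundedness is precisely that transformation, with the details spelled out. The reference to part~(5) in your opening paragraph is harmless but unnecessary: the unscaling step uses only the linearity of $\rba(a^n)$, not the compatibility of $x_n$ with scalar multiplication.
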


For $n=1$, this result (of which \cite[Corollary~17.6]{doran} for countable subsets of $\ess{\bs}$ is then a special case) follows from the simultaneous non-power factorization for compact subsets of $\ess{\bs}$ via a concrete and simple transformation of the picture. We refer to \cite[proof of Corollary~5.2.3.(a)]{Palmer_I} for details. The same concrete transformation gives the simultaneous power version in Corollary~\ref{cor:simultaneous_power_factorization_for_countable_union_of_compact_subsets} as a result of Theorem~\ref{thm:simultaneous_power_factorization}. It is, therefore, possible to obtain estimates that are valid for the factorization in Corollary~\ref{cor:simultaneous_power_factorization_for_countable_union_of_compact_subsets} from those in Theorem~\ref{thm:simultaneous_power_factorization}. For reasons of space, we refrain from going into this.

\section{Positive simultaneous power factorization}\label{sec:positive_simultaneous_power_factorization}

Let $\oba$ be an ordered Banach algebra that has a positive left approximate identity $\ai$, let $\obs$ be an ordered Banach space, and let $\rba$ be a positive continuous representation of $\oba$ on $\obs$. In this context, it is natural to investigate the existence of a positive factorization. Restricting ourselves to the pointwise non-power case, we have the following question: if $s\in\pos{\ess{\obs}}$, do there always exist $a\in\pos{\oba}$ and $x\in\pos{\obs}$ such that $s=\rba(a)x$?

The answer to the question in this generality is negative. In fact, it can already fail for the left regular representation of $\oba$. It was remarked by Rudin (see \cite{rudin1957}) that there exist positive elements of $\pos{\Ell^1(\RR)}$ that are not the convolution of two elements of $\pos{\Ell^1(\RR)}$: the convolution of two
non-negative integrable functions is always lower semi-continuous, but there exist non-negative integrable functions that are not almost everywhere equal to a lower semi-continuous function. We refer to \cite{Ross} for more (also historical) information concerning this matter and various factorization theorems, with special attention for factorization in abstract harmonic analysis.

In this section, we shall investigate the existence of positive factorizations as they can sometimes be derived from Theorem~\ref{thm:simultaneous_power_factorization}. Looking at Theorem~\ref{thm:simultaneous_power_factorization}, the positivity of $a$ is hardly an issue. If $\ai$ is positive, and if $\pos{\oba}$ is closed, then part (6a) of Theorem~\ref{thm:simultaneous_power_factorization} shows that factorizations produced by Theorem~\ref{thm:simultaneous_power_factorization} will always have positive $a$. The problem lies with the maps $x_n$ for $n\geq 1$. Can we sometimes guarantee that $x_n(S)\subset\pos{\ess{\obs}}$ for $S\subset\pos{\obs}$?

If one is prepared to be content with this property for the first finitely many values of $n$, then part (4a) of Theorem~\ref{thm:simultaneous_power_factorization} gives an obvious sufficient condition. The following result is clear.

\begin{theorem}\label{thm:positive_simultaneous_power_factorization_for_subset_of_interior_positive_cone}

Let $\oba$ be an ordered Banach algebra with a closed positive cone and a positive left approximate identity $\ai$ of bound $M\geq 1$, let $\obs$ be an ordered Banach space, and let $\rba$ be a positive continuous representation of $\ba$ on $\bs$.  Let $S$ be a non-empty subset of $\pos{\obs}$ such that $\lim_\lambda \rba(\e)s=s$ uniformly on $S$, and suppose that $S$ is bounded or that $\ai$ is commutative.

Assume, in addition, that there exists $\eta>0$ such that, for all $s\in S$, $\{\,x\in\ess{\obs} : \norm{x-s}<\eta\,\}\subset\pos{\ess{\obs}}$.

Then, for every superalgebra $B$, $\epsilon$ such that $\epsilon<\eta$, $\delta$, $r$, $n_0$, and sequence $\{\alpha_n\}_{n=1}^\infty$ as in Theorem~\ref{thm:simultaneous_power_factorization}, there exists a simultaneous power factorization as in that theorem for the restricted continuous representation $\ess{\rba}$ of $\oba$ on the Banach space $\ess{\obs}$ with non-empty subset $S$ of $\ess{\obs}$, with the following additional statements:
\begin{enumerate}
\item[(8)] $a\in\pos{\oba}$;
\item[(9)] $x_n(S)\subset\pos{\ess{\obs}}$ for all $n=1,\ldots,n_0$. More precisely:  $\{\,x\in\ess{\obs} : \norm{x-x_n(s)}<\eta-\epsilon\,\}\subset\pos{\ess{\obs}}$ for all $n=1,\ldots,n_0$ and $s\in S$.
\end{enumerate}
\end{theorem}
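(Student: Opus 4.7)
The plan is to deduce this result directly from Theorem~\ref{thm:simultaneous_power_factorization} applied to the restricted representation $\ess{\rba}$ of $\oba$ on the Banach space $\ess{\obs}$, making use of the positivity hypotheses and the closedness of the cone to upgrade the conclusions (6a) and (4a) to the statements in (8) and (9).

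First I would observe that the hypotheses of Theorem~\ref{thm:simultaneous_power_factorization} apply to $\ess{\rba}$ on $\ess{\obs}$ with subset $S$: the assumption $\lim_\lambda \rba(\e)s = s$ uniformly on $S$ in particular gives pointwise convergence, hence $S \subset \ess{\obs}$. Choose any of the admissible data $\uniba$, $\epsilon$, $\delta$, $r$, $n_0$, $\{\alpha_n\}$ (with $\epsilon < \eta$) and apply Theorem~\ref{thm:simultaneous_power_factorization} to obtain $a \in \oba$ and maps $x_n: S \to \ess{\obs}$ satisfying properties (1) through (7), together with the accompanying sequence $\{u_i\}_{i=1}^\infty$ from (6).

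For (8), part (6a) gives
\[
a = \sum_{i=1}^\infty r(1-r)^{i-1} u_i,
\]
with $a$ in the closed convex hull of $\{u_i : i \geq 1\}$ in $\oba$. Since the $u_i$ lie in the range of $\ai$, they are positive, and since $\pos{\oba}$ is a convex cone that is closed in $\oba$, their closed convex hull is contained in $\pos{\oba}$. Hence $a \in \pos{\oba}$.

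For (9), part (4a) asserts that $\norm{s - x_n(s)} \leq \epsilon$ for all $n = 1, \ldots, n_0$ and all $s \in S$. Given any $x \in \ess{\obs}$ with $\norm{x - x_n(s)} < \eta - \epsilon$, the triangle inequality yields
\[
\norm{x - s} \leq \norm{x - x_n(s)} + \norm{x_n(s) - s} < (\eta - \epsilon) + \epsilon = \eta,
\]
so by hypothesis $x \in \pos{\ess{\obs}}$. Taking $x = x_n(s)$ itself (where $\norm{x - x_n(s)} = 0 < \eta - \epsilon$) shows in particular that $x_n(s) \in \pos{\ess{\obs}}$, establishing the more precise neighborhood statement in (9). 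There is no real obstacle here; the only thing to verify carefully is that $\epsilon < \eta$ is indeed imposed in the hypothesis so that $\eta - \epsilon > 0$, which is exactly what makes the triangle inequality argument non-vacuous.
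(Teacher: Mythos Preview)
Your proposal is correct and matches the paper's approach exactly: the paper simply declares the result ``clear'' from parts~(6a) and~(4a) of Theorem~\ref{thm:simultaneous_power_factorization}, and your argument spells out precisely those two deductions (closed convex hull of positive $u_i$ for~(8), triangle inequality from the $\epsilon$-estimate for~(9)).
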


\begin{remark}\quad
\begin{enumerate}
\item Note that $\pos{\obs}$ need to be closed in Theorem~\ref{thm:positive_simultaneous_power_factorization_for_subset_of_interior_positive_cone}.
\item The result is only non-void if the interior of $\pos{\ess{\obs}}$ is non-empty. It is interesting to note that, in Rudin's (counter)example, the positive cone of the essential subspace of the ordered Banach space in question, i.e.\  $\pos{\Ell^1(\mathbb R)}$, has empty interior. It is unclear to the authors whether this is actually somehow related to the failure of positive factorization for the left regular representation of $\Ell^1(\mathbb R)$.
\item The idea to use estimates as in part (4a) of Theorem~\ref{thm:positive_simultaneous_power_factorization_for_subset_of_interior_positive_cone} to obtain a positive factorization is by no means new. It was already observed by Cohen (see \cite[p.~204]{cohen1959}), using precisely this argument, that a strictly positive continuous function $f$ on a compact group $G$ is a convolution of a strictly positive element $a$ of $\Ell^1(G)$ and a strictly positive continuous function $f_1$. This corresponds to an application of Theorem~\ref{thm:positive_simultaneous_power_factorization_for_subset_of_interior_positive_cone} to  the action of $\Ell^1(G)$ on $\cont{G}$ by convolution; the strict positivity of $a$ follows from part (6a) of Theorem~\ref{thm:simultaneous_power_factorization} once one notes that $\Ell^1(G)$ has a strictly positive bounded left approximate identity (see \cite[p.~203]{cohen1959} for the easy argument). It is now also clear that, for $n_0
\geq 1$, one can, in fact, obtain finitely many factorizations $f=a^{\ast n}\ast f_n$ for $n=1,\ldots,n_0$, where $a\in
\Ell^1(G)$ and the $f_n\in\cont{G}$ are all strictly positive. This can even be achieved simultaneously for all $f$ in a suitable subset (for example, a totally bounded subset) of $\pos{\cont{G}}$ that is bounded below by a strictly positive constant function.
\end{enumerate}
\end{remark}

Part (6b) of Theorem~\ref{thm:simultaneous_power_factorization} gives another sufficient condition for a positive simultaneous power factorization to exists, and this time such that $x_n$ maps $S$ into $\pos{\ess{\obs}}$ for all $n\geq 1$. Once one observes that the $b_k$ in part (6b) are all positive if $\ai$ is positive, the following result is clear.

\begin{theorem}\label{thm:positive_simultaneous_power_factorization}

Let $\oba$ be an ordered Banach algebra with a closed positive cone and a positive left approximate identity $\ai$ of bound $M\geq 1$, let $\obs$ be an ordered Banach space with a closed positive cone, and let $\rba$ be a positive continuous representation of $\ba$ on $\bs$.  Let $S$ be a non-empty subset of $\pos{\bs}$ such that $\lim_\lambda \rba(\e)s=s$ uniformly on $S$, and suppose that $S$ is bounded or that $\ai$ is commutative.

Suppose that there exists a unital ordered Banach superalgebra $\unioba\supset\oba$ such that:
\begin{enumerate}
\item the restricted positive continuous representation $\ess{\rba}$ of $\ba$ on $\ess{\obs}$ extends to a positive continuous unital representation of $\unioba$ on $\ess{\obs}$;
\item $\pos{\unioba}$ is inverse closed in $\unioba$.
\end{enumerate}

With this choice of $\uniba$ in Theorem~\ref{thm:simultaneous_power_factorization}, for every $\epsilon$, $\delta$, $r$, $n_0$, and sequence $\{\alpha_n\}_{n=1}^\infty$ as in that theorem, there exists a simultaneous power factorization as in that theorem for the restricted continuous representation $\ess{\rba}$ of $\oba$ on the Banach space $\ess{\obs}$ with non-empty subset $S$ of $\ess{\obs}$, with the following additional statements:
\begin{enumerate}
\item[(8)] $a\in\pos{\oba}$;
\item[(9)] $x_n(S)\subset\pos{\ess{\obs}}$ for all $n\geq 1$.
\end{enumerate}

\end{theorem}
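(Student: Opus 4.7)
The plan is to invoke Theorem~\ref{thm:simultaneous_power_factorization} directly with the choice $\uniba:=\unioba$, and then to read off the two additional positivity statements from the explicit Ansatz that part~(6) of that theorem provides. All of the analytic work has already been carried out in Theorem~\ref{thm:simultaneous_power_factorization}; what remains is simply to track positivity through the conclusions. First I would verify applicability: take the Banach algebra to be $\oba$ with its positive bounded left approximate identity $\ai$, take the Banach space to be $\ess{\obs}$, and take the representation to be the restriction $\ess{\rba}$. Since $S\subset\pos{\obs}$ with $\lim_\lambda\rba(\e)s=s$ uniformly on $S$, we have $S\subset\ess{\obs}$. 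Hypothesis~(1) of the present theorem is exactly what is needed for $\ess{\rba}$ to extend to a continuous unital representation of the superalgebra $\unioba$, so Theorem~\ref{thm:simultaneous_power_factorization} applies and produces $a\in\oba$, maps $x_n:S\to\ess{\obs}$ for $n\geq 1$, and sequences $\seqko{b}$ in $\unioba$ and $\{u_i\}_{i=1}^\infty$ in $\bigcup_{\lambda\in\Lambda}\{\,\e\,\}$ with properties (1) through (7).

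To establish (8), I would invoke part~(6a): the element $a$ lies in the closed convex hull in $\oba$ of $\{u_i:i\geq 1\}$. Each $u_i$ equals some $e_{\lambda_i}$, and hence belongs to $\pos{\oba}$ because $\ai$ is a positive approximate identity. Since $\pos{\oba}$ is convex and closed by assumption, $a\in\pos{\oba}$.

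For (9), I would exploit the inverse closedness hypothesis. Part~(6b) gives $b_k=(1-r)^k\unit+\sum_{i=1}^k r(1-r)^{i-1}u_i$, and one checks that the coefficients sum to~$1$, so $b_k$ is an honest convex combination of $\unit,u_1,\ldots,u_k$. Since $\unit\in\pos{\unioba}$ by our standing convention and $u_i\in\pos{\oba}\subset\pos{\unioba}$, we get $b_k\in\pos{\unioba}$. The inverse closedness of $\pos{\unioba}$ in $\unioba$ then yields $b_k^{-1}\in\pos{\unioba}$, and hence $b_k^{-n}\in\pos{\unioba}$ for every $n\geq 1$. Positivity of the extended representation, together with $s\in\pos{\obs}\cap\ess{\obs}$, gives $\rba(b_k^{-n})s\in\pos{\ess{\obs}}$ for all $k\geq 0$. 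Since $\pos{\obs}$ is closed and $\ess{\obs}$ is closed in $\obs$, the cone $\pos{\ess{\obs}}=\pos{\obs}\cap\ess{\obs}$ is closed in $\ess{\obs}$. Passing to the limit in the identity $x_n(s)=\lim_{k\to\infty}\rba(b_k^{-n})s$ from part~(6b) then delivers $x_n(s)\in\pos{\ess{\obs}}$ for all $n\geq 1$ and all $s\in S$.

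I do not foresee a genuine obstacle: once the form of the factorization in part~(6) of Theorem~\ref{thm:simultaneous_power_factorization} is available, the argument collapses to the elementary facts that a convex combination of positive elements is positive, that inverse closedness transports this to $b_k^{-n}$, and that positivity of $\rba$ then passes to the limiting $x_n(s)$. The conceptual work of the present theorem sits in the freedom of choice of $\unioba$ built into Theorem~\ref{thm:simultaneous_power_factorization} and in the recognition that inverse closedness of $\pos{\unioba}$ is the natural hypothesis that makes the whole mechanism run.
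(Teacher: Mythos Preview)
Your proposal is correct and follows exactly the approach the paper takes: the paper remarks just before the theorem that ``once one observes that the $b_k$ in part (6b) are all positive if $\ai$ is positive, the following result is clear,'' and your write-up simply spells out this observation together with the closedness and inverse-closedness hypotheses needed to pass positivity to $a$, to $b_k^{-n}$, and to the limit $x_n(s)$. If anything, you have supplied more detail than the paper does.
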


\begin{remark}\label{rem:ordered_theorem_implies_original_theorem}\quad
\begin{enumerate}
\item 
One might be tempted to think of Theorem~\ref{thm:positive_simultaneous_power_factorization} as a special case of Theorem~\ref{thm:simultaneous_power_factorization} in an ordered context, but actually it is not. It is more precise than the latter result, which it contains as a special case. Indeed, in the context of Theorem~\ref{thm:simultaneous_power_factorization} one can introduce an ordering on $\ba$, $\uniba$, and $\bs$ by taking the spaces themselves as the positive cones. Then positivity of maps, closedness of positive cones and inverse closedness of algebra cones all become a triviality, and Theorem~\ref{thm:positive_simultaneous_power_factorization} is applicable. It then yields all conclusions in Theorem~\ref{thm:simultaneous_power_factorization}, and adds the then trivially true statements in the parts (8) and (9).

\item  In this context, let us include the following small result, with as particular case that $\pos{\uniba}$ is inverse closed if squares in $\uniba$ are positive.
\smallskip

\noindent \textit{Let $\unioba$ be a unital ordered algebra with positive cone $\pos{\unioba}$. If $b^{-2}\in\pos{\unioba}$ for all $b\in\pos{\uniba}\cap\inv{\unioba}$ \ulb in particular: if $b^2\in\pos{\unioba}$ for all $b\in\unioba$\urb, then $\pos{\unioba}$ is inverse closed in $\uniba$.}
\begin{proof}
Suppose that $b\in\pos{\uniba}\cap\inv{\unioba}$. Then $b^{-1}=b^{-1}\unit=(b^{-1})^2b\geq 0$.
\end{proof}
\end{enumerate}

\end{remark}

The next desirable step would be to exhibit a class of ordered Banach algebras $\ba$ such that an ordered superalgebra $\unioba$ as in Theorem~\ref{thm:positive_simultaneous_power_factorization} exists for all (or at least for a reasonably large class of) positive representations of $\ba$ on ordered Banach spaces $\obs$. Rudin's example shows, however, that positive factorization already fails for the left regular representation of $\Ell^1(\mathbb R)$. Since  $\Ell^1(\mathbb R)$ is a commutative Banach lattice algebra, and since the left regular representation is even an isometric lattice homomorphism of $\Ell^1(\RR)$ into the regular operators on $\Ell^1(\RR)$ (this is true for the left regular representation of $\Ell^1(G)$ for an arbitrary locally compact group; see \cite[Proposition~3.3]{Arendt}), the situation here is about as nice as one can get, apart, perhaps, from the positive cone of the representation space having empty interior. Possibly one should have modest expectations about such general theorems. Theorem~\ref{thm:positive_simultaneous_power_factorization_for_banach_algebras_of_functions} is a result in this direction, but the matter as a whole is still unclear and more research seems desirable.

A natural candidate for an ordered Banach superalgebra of $\oba$ is its unitization in its natural ordering. Certainly, a positive representation of $\oba$ extends to a positive representation of its unitization, but, as the next result shows, the positive cone of the unitization will hardly ever be inverse closed for the ordered Banach algebras that one is most likely to encounter in practice.  The unitization of every non-zero Banach lattice algebra, for example, does not have this property. We recall that the positive cone of an ordered normed space $\obs$ is said to be normal if there exists $\alpha\geq 0$ such that $\norm{x}\leq\alpha\norm{y}$ whenever $x,y\in\obs$ are such that $0\leq x\leq y$.

\begin{proposition}\label{prop:positive_cone_of_oba_1_is_not_inverse_closed_if_oba_is_normal}Let $\oba$ be a ordered Banach algebra with positive cone $\pos{\oba}$, and let $\unioba$ be its unitization with positive cone $\pos{\unioba}={ \mathbb R}_{\geq 0}\oplus\pos{\oba}$. Assume that $\pos{\oba}$ is proper and closed, or that $\pos{\oba}$ is normal. Then $\pos{\uniba}$ is inverse closed in $\unioba$ if and only if $\pos{\oba}=\{\,0\,\}$.
\end{proposition}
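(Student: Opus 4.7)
The plan is to handle the easy direction first, then attack the converse with a standard Neumann-series test element. If $\pos{\oba} = \{0\}$, then $\pos{\unioba} = \mathbb{R}_{\geq 0}\unit$, so every invertible element of $\pos{\unioba}$ is a strictly positive scalar multiple of $\unit$ and has its inverse back in $\pos{\unioba}$; thus $\pos{\unioba}$ is inverse closed.

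For the non-trivial direction I would argue by contrapositive: assuming $\pos{\oba} \neq \{0\}$, I produce a positive invertible element of $\unioba$ whose inverse escapes $\pos{\unioba}$. Fix a nonzero $a \in \pos{\oba}$ and, for each $0 < t < 1/\norm{a}$, consider $\unit + ta \in \pos{\unioba}$. The Neumann series gives $(\unit + ta)^{-1} = \unit + c_t$ with $c_t := \sum_{n=1}^\infty (-t)^n a^n \in \oba$, and a direct geometric estimate yields $\norm{c_t/t + a} \leq t\norm{a}^2/(1 - t\norm{a}) \to 0$ as $t \to 0^+$. If $\pos{\unioba}$ were inverse closed, then uniqueness of the decomposition $\unioba = \mathbb{R}\unit \oplus \oba$, combined with $\pos{\unioba} = \mathbb{R}_{\geq 0}\unit \oplus \pos{\oba}$, would force $c_t \in \pos{\oba}$, and hence $u_t := c_t/t \in \pos{\oba}$ for all such $t$, with $u_t \to -a$ in norm.

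It remains to derive a contradiction from the two alternative hypotheses on $\pos{\oba}$. In the proper-and-closed case, closedness gives $-a = \lim_{t\to 0^+} u_t \in \pos{\oba}$, and properness then forces $a \in \pos{\oba}\cap(-\pos{\oba}) = \{0\}$, contradicting $a \neq 0$. In the normal case, writing $v_t := u_t + a$ so that $\norm{v_t} \to 0$, the relation $u_t \in \pos{\oba}$ reads $a \leq v_t$; combined with $0 \leq a$, normality gives $\norm{a} \leq \alpha\norm{v_t} \to 0$, again contradicting $a \neq 0$. (The normal case implicitly relies on normality forcing properness, which follows from $0 \leq x \leq 0$ and normality giving $\norm{x} \leq \alpha\norm{0} = 0$.)

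I do not anticipate any serious obstacle here: the one conceptual point is recognising that $\unit + ta$ is the right test element, since the first-order term of its Neumann inverse unavoidably carries a $-ta$ contribution in the $\oba$-component, and this is exactly the positivity obstruction that either closedness plus properness or normality is strong enough to detect.
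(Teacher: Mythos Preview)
Your proposal is correct and follows essentially the same approach as the paper: both use the test element $\unit + ta$ for $a \in \pos{\oba}$ and small $t > 0$, read off that the $\oba$-component $c_t$ of its Neumann inverse must lie in $\pos{\oba}$, and then let $t \to 0^+$ to force $a = 0$ in each of the two cases. The paper's bookkeeping in the normal case is cosmetically different (it applies normality to $0 \leq ta \leq \sum_{n\geq 2}(-ta)^n$ rather than to your $0 \leq a \leq v_t$, but these are the same inequality after dividing by $t$); your parenthetical about normality forcing properness is harmless but unnecessary, since your normal-case argument already yields $\norm{a}=0$ directly.
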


\begin{proof}
If $\pos{\oba}=\{\,0\,\}$, then clearly $\pos{\unioba}=\mathbb R_{\geq 0}$ is inverse closed in $\uniba$. 

We first establish the converse for the case where $\pos{\oba}$ is proper and closed.
 Let $a\in\pos{\oba}$. Then $ta\in\pos{\oba}$ for all $t\geq 0$, and $\norm{ta}<1$ for all sufficiently small $t\geq 0$. Hence, for all sufficiently small $t\geq 0$, $1+ta\in\pos{\unioba}$ is invertible in $\unioba$ with inverse $(1+ta)^{-1}=1+\sum_{n=1}^\infty(-ta)^n$. Since $(1+ta)^{-1}\in\pos{\unioba}$ by assumption, we see that
$\sum_{n=1}^\infty(-ta)^n\geq 0$ for all sufficiently small $t\geq 0$. This implies that $\sum_{n=1}^\infty  (-1)^n t^{n-1} a^n\geq 0$ for all sufficiently small $t>0$. Letting $t\downarrow 0$, and using that $\pos{\oba}$ is closed, we conclude that $-a\geq 0$. Since $\pos{\oba}$ is assumed to be proper, we see that $a=0$.

We now establish the converse for the case where $\pos{\oba}$ is normal. Let $a\in\pos{\oba}$. As in the previous case, this implies that $\sum_{n=1}^\infty(-ta)^n\geq 0$ for all sufficiently small $t\geq 0$. Hence $0\leq ta\leq \sum_{n=2}(-ta)^n$ for all sufficiently small $t\geq 0$. Since $\pos{\oba}$ is normal, we know that there exists $\alpha\geq 0$ such that
\[
\norm{ta}\leq \alpha\norm{\sum_{n=2}^\infty(-ta)^n}\leq\alpha\sum_{n=2}^\infty (t\norm{a})^n=\alpha \frac{t^2\norm{a}^2}{1-t\norm{a}}
\]
for all sufficiently small $t\geq 0$. Hence
\[
\norm{a}\leq \alpha t\frac{\norm{a}^2}{1-t\norm{a}}
\]
for all sufficiently small $t>0$, which implies that $a=0$.

\end{proof}

\begin{remark}\label{rem:centralizer_algebras} It is worthwhile to mention that there can be other natural candidates for unital Banach superalgebras in Theorem~\ref{thm:positive_simultaneous_power_factorization} to work with: the left centralizer algebra and the double centralizer algebra of $\oba$. According to \cite[Theorem~4.1]{DJW}, a continuous non-degenerate representation $\rba$ of a normed algebra $\ba$ with a bounded approximate left identity $\ai$ on a Banach space $\bs$ gives rise to a continuous unital representation $\overline{\pi}$ of the left centralizer algebra $\leftcent{\ba}$ of $\ba$ on $\bs$ that is compatible with the canonical homomorphism $\leftreg: \ba\to\leftcent{\ba}$ as provided by the left regular representation of $\ba$, i.e.\ is such that $\rba=\overline{\rba}\circ\leftreg$.  It is given by $\overline{\pi}(L)=\textup{SOT}-\lim_{\lambda}\rba(L(\e))$ for $L\in\leftcent{\ba}$. If $\oba$ is an ordered Banach algebra, if $\ai$ is positive, if $\obs$ is a Banach space with a closed positive cone, and if $\rba$ is positive, then $\leftcent{\oba}$ is an ordered Banach algebra and $\overline{\pi}$ is positive. One can now apply Theorem~\ref{thm:simultaneous_power_factorization} to this context, where $\ba$ is to be replaced with the closure $\overline{\leftreg(\oba)}$ of $\leftreg(\oba)$ in $\leftcent{\oba}$, $\ai$ is to be replaced with $\{\leftreg(\e)\}_{\lambda\in\Lambda}$, $\unioba$ is chosen to be $\leftcent{\oba}$, and $\rba$ is to be replaced with $\overline{\rba}$. Theorem~\ref{thm:simultaneous_power_factorization} will then produce a simultaneous power factorization of the form
\begin{equation}\label{eq:centralizer_in_factorization}
s=\overline{\rba}(L^n)x_n(s)
\end{equation}
for some $L\in\overline{\leftreg(\oba)}$. If $\lambda(A)$ is closed in  $\leftcent{\oba}$ (e.g.\ if $\oba$ also has a bounded right approximate identity), then $L=\leftreg(a)$ for some $a\in\ba$, and the factorization takes it usual form.

The point is that  $\leftcent{\oba}$ can have better properties than the unitization of $\ba$. More specifically: it can be the case that $\pos {\leftcent{\oba}}$ is inverse closed, whereas\textemdash see Proposition~\ref{prop:positive_cone_of_oba_1_is_not_inverse_closed_if_oba_is_normal}\textemdash the positive cone of the unitization of $\oba$ quite often is not. In that case, Theorem~\ref{thm:positive_simultaneous_power_factorization} will produce a factorization as in \eqref{eq:centralizer_in_factorization} with $L\in\pos{\leftcent{\oba}}$. If $\leftreg$ is a bipositive topological embedding of $\oba$ into  $\leftcent{\oba}$ (e.g.\ if $\oba$ has a closed positive cone and if $\oba$ also has a positive bounded right approximate identity), then $L=\leftreg(a)$ for some $a\in\pos{\oba}$, and a positive simultaneous power factorization has been obtained.

The situation in the previous paragraph can actually occur. If $\oba=\conto{\ts}$ for a locally compact Hausdorff space $\ts$, then we know from Proposition~\ref{prop:positive_cone_of_oba_1_is_not_inverse_closed_if_oba_is_normal} that the positive cone of its unitization is not inverse closed. However, this \emph{is} quite obviously the case for its centralizer algebra $\contb{\ts}$. Hence we still have a positive simultaneous power factorization theorem for $\conto{\ts}$.

Similar remarks apply to the double centralizer algebra of an ordered Banach algebra that has a closed positive cone, a positive bounded left approximate identity, and a positive bounded right approximate identity. In that case, \cite[Theorem~4.5]{DJW} can be used.
\end{remark}

The preceding remark motivates the choice of the superalgebra $\uniba$ in the final result of this section. Strictly speaking, it has already been established in that remark, since, by the Gelfand-Naimark theorem, the algebra $\oba$ in Theorem~\ref{thm:positive_simultaneous_power_factorization_for_banach_algebras_of_functions} below is isometrically and bipositively isomorphic to an algebra $\conto{\ts}$ for some locally compact Hausdorff space $\ts$. As the proof below shows, one can also avoid invoking this representation theorem, and simply apply the observation that positive cones of unital algebras  of functions are obviously inverse closed.

\begin{theorem}\label{thm:positive_simultaneous_power_factorization_for_banach_algebras_of_functions}
Let $\Omega$ be a non-empty set, and let $\oba$ be an ordered Banach algebra of bounded functions on $\Omega$, supplied with the supremum norm. Then $\oba$ has a positive 1-bounded approximate identity.

Let $\obs$ be an ordered Banach space with a closed positive cone, and let $\rba$ be a positive continuous representation of $\ba$ on $\bs$.  Let $S$ be a non-empty subset of $\pos{\obs}$ such that $\lim_\lambda \rba(\e)s=s$ uniformly on $S$.

Let
\[
\unioba=\{\,g:\Omega\to\FF : g\textup{ is bounded and }g f\in\oba\textup{ for all } f\in\oba\,\}.
\]
be the normalizer of $\oba$ in the bounded functions on $\Omega$, supplied with the supremum norm. Then the unital ordered superalgebra $\uniba$ of $\oba$ satisfies the hypotheses under \ulb 1\urb\ and \ulb 2\urb\ in Theorem~\ref{thm:positive_simultaneous_power_factorization}.

Therefore, with this choice of $\uniba$ in Theorem~\ref{thm:simultaneous_power_factorization}, for every $\epsilon$, $\delta$, $r$, $n_0$, and sequence $\{\alpha_n\}_{n=1}^\infty$  as in that theorem, there exists a simultaneous power factorization as in that theorem for the restricted continuous representation $\ess{\rba}$ of $\oba$ on the Banach space $\ess{\obs}$ with non-empty subset $S$ of $\ess{\obs}$, with the following additional statements:
\begin{enumerate}
\item[(8)] $a\in\pos{\oba}$;
\item[(9)] $x_n(S)\subset\pos{\ess{\obs}}$ for all $n\geq 1$;
\item[(10)] one can take $M=1$ in part \ulb 2\urb\ of Theorem~\ref{thm:simultaneous_power_factorization}.
\end{enumerate}
\end{theorem}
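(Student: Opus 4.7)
My plan is to verify the three assertions in turn, then invoke Theorem~\ref{thm:positive_simultaneous_power_factorization}. For the first assertion, to construct a positive $1$-bounded approximate identity for $\oba$ I would use a polynomial approximation argument. Given $f_1,\dots,f_n\in\oba$ and $\epsilon>0$, set $h=\sum_{i=1}^n f_i^2$ (in the real case; in the complex case one uses $h=\sum_i f_i\overline{f_i}$ when $\oba$ is self-adjoint, or invokes the Gelfand--Naimark identification of $\oba$ with some $\conto{\ts}$ noted just before the theorem statement) and normalize so that $\|h\|\leq 1$. The polynomial $p_N(x)=1-(1-x)^N$ has no constant term, so $p_N(h)\in\oba$; it maps $[0,1]$ into $[0,1]$, so $p_N(h)\in\pos{\oba}$ with $\|p_N(h)\|\leq 1$; and the pointwise estimate $(1-p_N(h(\omega)))|f_i(\omega)|=(1-h(\omega))^N|f_i(\omega)|\leq \sqrt{\|h\|}(1-h(\omega))^N\sqrt{h(\omega)}$ has maximum of order $N^{-1/2}$ on $[0,1]$, which is smaller than $\epsilon$ once $N$ is large. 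The net indexed by pairs $(\{f_1,\dots,f_n\},\epsilon)$ is then the desired approximate identity.

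Next I would verify that the normalizer $\uniba$ is a unital ordered Banach superalgebra of $\oba$. It contains the constant function $\mathbf 1$ (since $\mathbf 1\cdot f=f\in\oba$) and contains $\oba$ (since $\oba$ is itself an algebra); it is closed under pointwise operations, and it is norm-closed because if $g_n\to g$ uniformly with each $g_n f\in\oba$, then $g_n f\to gf$ in the complete space $\oba$. With the pointwise order, $\pos{\uniba}$ is a closed cone. Condition (2) of Theorem~\ref{thm:positive_simultaneous_power_factorization} is then immediate: if $g\in\pos{\uniba}$ is invertible in $\uniba$, then $g$ must be bounded below by a positive constant (else its pointwise inverse would be unbounded), so $g^{-1}=1/g$ is non-negative pointwise and hence in $\pos{\uniba}$.

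For condition (1) I would route the extension through the left centralizer algebra as in Remark~\ref{rem:centralizer_algebras}. Each $g\in\uniba$ defines a bounded left centralizer $L_g\colon f\mapsto gf$ of $\oba$ of norm at most $\|g\|$, yielding a contractive unital homomorphism $\uniba\to\leftcent{\oba}$. Composing with the continuous unital representation $\overline{\rba}$ of $\leftcent{\oba}$ on $\ess{\obs}$ produced by \cite[Theorem~4.1]{DJW} and recalled in Remark~\ref{rem:centralizer_algebras}, which satisfies $\overline{\rba}(L_g)x=\textup{SOT-}\lim_\lambda \rba(ge_\lambda)x$, gives a continuous unital representation of $\uniba$ on $\ess{\obs}$ that agrees with $\ess{\rba}$ on $\oba$. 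Positivity is automatic: for $g\in\pos{\uniba}$ and $x\in\pos{\ess{\obs}}$, every approximant $\rba(ge_\lambda)x$ lies in the closed cone $\pos{\obs}$ because $ge_\lambda\in\pos{\oba}$ and $\rba$ is positive, and $\ess{\obs}$ is invariant, so the limit stays in $\pos{\obs}\cap\ess{\obs}=\pos{\ess{\obs}}$.

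With both hypotheses of Theorem~\ref{thm:positive_simultaneous_power_factorization} verified, that theorem delivers the simultaneous power factorization together with (8) and (9); assertion (10) is automatic because the approximate identity constructed in the first step has $M=1$, precisely the constant entering part (2) of Theorem~\ref{thm:simultaneous_power_factorization}. The main obstacle I anticipate is condition (1): verifying that the formula $\overline{\rba}(L_g)x=\lim_\lambda\rba(ge_\lambda)x$ exists, is independent of the approximate identity, and depends continuously on $g$ is non-trivial to do from scratch, so the proof essentially rests on having the centralizer machinery of \cite[Theorem~4.1]{DJW} at one's disposal rather than on elementary manipulations with $\fai$.
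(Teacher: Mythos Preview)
Your proposal is correct and follows the same overall architecture as the paper: verify the approximate identity exists, verify that the normalizer $B$ satisfies the two hypotheses of Theorem~\ref{thm:positive_simultaneous_power_factorization}, then invoke that theorem. The differences are in the details of two steps. For the approximate identity, the paper simply observes that $A$ is a (real or complex) $\mathrm{C}^*$-algebra and cites \cite[Theorem~3.1.1]{Murphy}; your polynomial construction with $p_N(x)=1-(1-x)^N$ is precisely the standard proof of that cited result, so you are unpacking the citation rather than taking a different route. For hypothesis~(1), the paper notes that $A$ is a left ideal in $B$ and invokes \cite[Theorem~3.1]{DJW} directly to extend $\ess{\rba}$; you instead factor through the left centralizer algebra via \cite[Theorem~4.1]{DJW} as in Remark~\ref{rem:centralizer_algebras}. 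Both arguments rest on the same external machinery from \cite{DJW}, and the left-ideal version is the more direct of the two, but yours is equally valid. Your treatment of inverse closedness is the same as the paper's (``trivially inverse closed'' because these are functions).
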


\begin{proof}
If $\FF=\CC$, then $\oba$ is a complex $\textup{C}^*$-algebra, so that it has a 1-bounded positive approximate identity; see e.g.\ \cite[Theorem~3.1.1]{Murphy}.
If $\FF=\RR$, then we consider the algebra of complex functions $\oba_\CC=\oba\oplus\mathrm{i}\oba$, supplied with the supremum norm.  This is a complex $\textup{C}^*$-algebra, and a 1-bounded positive approximate identity for $\oba_\CC$ is contained in $\oba$. We conclude that, in both cases, $\oba$ has a 1-bounded positive approximate identity.

It is clear that  $\unioba$ is a unital ordered Banach superalgebra of $\oba$.  Since it contains $\oba$ as a left ideal, and since $\oba$ contains a positive left approximate identity for itself, we conclude from  \cite[Theorem~3.1]{DJW} that the non-degenerate positive continuous representation of $\oba$ on $\ess{\obs}$ extends (uniquely) to a positive continuous unital representation of $\uniba$ on $\ess{\obs}$. Since we are working with functions, $\pos{\unioba}$ is trivially inverse closed in $\uniba$. Hence the hypotheses in Theorem~\ref{thm:positive_simultaneous_power_factorization} are satisfied, and an application of this result completes the present proof.
\end{proof}

\section{Simultaneous power factorization for sets of maps}\label{sec:simultanous_power_factorization_for_sets_of_maps}

According to \cite[p.~251]{doran},  Collins and Summer (see \cite{CollinsSummers}) and Rieffel (see \cite[proof of Lemma~1]{Rieffel1969}) were the first to realize that it can sometimes be fruitful to introduce an auxiliary Banach module to solve a problem at hand. For example, if one wants to prove that a convergent sequence in a Banach module can be factored termwise, then this can be done by considering the Banach space of all convergent sequences in the pertinent Banach space. This is a Banach module over the same algebra in a natural fashion, and an application of a factorization theorem in that context to the point corresponding to the original sequence will give what one wants.  The argument in Remark~\ref{rem:trick} (and in the references given therein) is another application of this idea of introducing an auxiliary module.

We shall now apply this idea to Theorem~\ref{thm:simultaneous_power_factorization}, which allows us to obtain simultaneous pointwise power factorization theorems for sets of maps. The most general set-up, formulated with an otherwise unspecified Banach space $\bs^\prime$, seems to be the following.

\begin{theorem}\label{thm:simultaneous_power_factorization_for_maps}
Let $\ba$ be a Banach algebra that has a bounded left approximate identity $\ai$ of bound $M\geq 1$, let $\bs$ be a Banach space, and let $\rba$ be a continuous representation of $\ba$ on $\bs$.  Let $S$ be a non-empty subset of $\bs$ such that $\lim_\lambda \rba(\e)s=s$ uniformly on $S$, and suppose that $S$ is bounded or that $\ai$ is commutative.

Let $\Omega$ be a non-empty set, and let $\bs^\prime$ be a Banach space of bounded maps from $\Omega$ into $\bs$, supplied with the supremum norm. Suppose that $X^\prime$ is invariant under the natural pointwise action of $\ba$ on $\bs$-valued maps on $\Omega$, so that there is a natural continuous representation $\rba^\prime$ of $\ba$ on $\bs^\prime$.

Choose a unital superalgebra $\uniba$ of $\ba$ such that $\rba^{\prime}$ extends to a continuous unital representation, again denoted by $\rba^\prime$, of $\uniba$ on $\bs^\prime$.

Let $S^\prime$ be the set of all $f\in\bs^\prime$ such that $f(\Omega)\subset S$.

With this choice of $\uniba$ in Theorem~\ref{thm:simultaneous_power_factorization}, for every $\epsilon$, $\delta$, $r$, $n_0$, and sequence $\{\alpha_n\}_{n=1}^\infty$ as in that theorem, there exists a simultaneous power factorization as in that theorem for the continuous representation  $\rba^\prime$ of $\ba$ on the Banach space $\bs^\prime$ with non-empty subset $S^\prime$ of $\bs^\prime$.
\end{theorem}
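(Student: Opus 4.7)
The plan is to invoke Theorem~\ref{thm:simultaneous_power_factorization} directly for the continuous representation $\rba^\prime$ of $\ba$ on $\bs^\prime$ with subset $S^\prime$ and unital superalgebra $\uniba$. Once the two genuinely substantive hypotheses of that theorem are verified for $(\rba^\prime, S^\prime)$, namely the uniform convergence of $\rba^\prime(\e)f$ to $f$ on $S^\prime$ and the bounded-or-commutative dichotomy, all conclusions (1)--(7) transfer automatically to the present setting without further work.

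First I would verify that $\lim_\lambda \rba^\prime(\e)f = f$ uniformly on $S^\prime$. Given $\epsilon>0$, by hypothesis there exists $\lambda_0 \in \Lambda$ such that $\norm{\rba(\e)s - s} < \epsilon$ for every $\lambda \geq \lambda_0$ and every $s \in S$. Since the action of $\ba$ on $\bs^\prime$ is pointwise and since every $f \in S^\prime$ has $f(\omega) \in S$ for every $\omega \in \Omega$, for any $\lambda \geq \lambda_0$ and any $f \in S^\prime$ we have
\[
\norm{\rba^\prime(\e)f - f}_{\bs^\prime} = \sup_{\omega \in \Omega}\norm{\rba(\e)f(\omega) - f(\omega)} \leq \epsilon,
\]
uniformly in $f \in S^\prime$. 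This yields the required uniform convergence.

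Next I would check the bounded-or-commutative dichotomy for $S^\prime$. If $\ai$ is commutative in $\ba$, this property is inherited verbatim and nothing is to be done. If instead $S$ is bounded, say $\sup_{s \in S}\norm{s} \leq M^\prime$, then every $f \in S^\prime$ satisfies $\norm{f}_{\bs^\prime} = \sup_{\omega \in \Omega}\norm{f(\omega)} \leq M^\prime$, so $S^\prime$ is bounded in $\bs^\prime$. The remaining ingredient, that $\rba^\prime$ extends to a continuous unital representation of $\uniba$ on $\bs^\prime$, is an explicit hypothesis of the theorem.

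With every hypothesis verified, an application of Theorem~\ref{thm:simultaneous_power_factorization} to $(\rba^\prime, S^\prime, \uniba)$ with the given parameters $\epsilon$, $\delta$, $r$, $n_0$, and $\{\alpha_n\}_{n=1}^\infty$ produces the required simultaneous power factorization. I do not anticipate any real obstacle: the substantive technical content is entirely contained in Theorem~\ref{thm:simultaneous_power_factorization}, and the auxiliary module $\bs^\prime$ of bounded maps into $\bs$ has been engineered precisely so that bounded pointwise data with values in $S$ become single elements of a subset of $\bs^\prime$. The only trivial caveat is that if $S^\prime$ happens to be empty, the statement is vacuous; otherwise Theorem~\ref{thm:simultaneous_power_factorization} applies as stated.
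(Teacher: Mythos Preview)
Your proposal is correct and follows essentially the same approach as the paper: verify that the uniform convergence on $S$ transfers to uniform convergence on $S^\prime$ via the supremum norm, observe that boundedness of $S$ implies boundedness of $S^\prime$ (and that commutativity of $\ai$ carries over trivially), then invoke Theorem~\ref{thm:simultaneous_power_factorization}. The paper's proof is the same argument in slightly more compressed form.
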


\begin{proof}
If $S$ is bounded, then so is $S^\prime$. Furthermore, if $f\in S^\prime$ and $\lambda\in\Lambda$, then $\norm{\rba^\prime(\e)f-f}=\sup_{\omega\in\Omega}\norm{\rba(\e)[f(\omega)]-f(\omega)}\leq \sup_{s\in S}\norm{\rba(\e)s-s}$. We conclude that $\lim_{\lambda}\norm{\rba^\prime(\e)f-f}=0$ uniformly on $S^\prime$. Therefore, Theorem~\ref{thm:simultaneous_power_factorization} can be applied.
\end{proof}

Naturally, one can always choose $\uniba$ to be the unitization of $\ba$. 

For reasons of space, we refrain from translating all statements in Theorem~\ref{thm:simultaneous_power_factorization} into the context of Theorem~\ref{thm:simultaneous_power_factorization_for_maps}. Let us note, however, that one of the consequences  is that there exist $a\in\ba$, and, for all $n\geq 1$, a map $x_n:S^\prime\to\bs^\prime$ such that $f(\omega)=\rba(a^n)\left([x_n(f)](\omega)\right)$ for all $f\in S^\prime$ and $\omega\in\Omega$.

Theorem~\ref{thm:simultaneous_power_factorization_for_maps} has several special cases. For general $\Omega$, one can let $X^\prime$ be the space of all bounded maps $f:\Omega\to\bs$. If $\Omega$ is a topological space, one can consider all $f:\Omega\to\bs$ that are bounded and continuous.  If $\Omega$ is a metric space, one can consider all $f:\Omega\to\bs$ that are bounded and uniformly continuous; this was done in Remark~\ref{rem:trick} for $\Omega=S$. Variations involving the vanishing of $f$ at a subset of $\Omega$ and / or at infinity can also be incorporated. If $\Omega$ has a differentiable structure, versions for sets of bounded $\bs$-valued maps possessing a certain degree of smoothness can conceivably be established.

Another class of special cases of Theorem~\ref{thm:simultaneous_power_factorization_for_maps} occurs when we lay emphasis on an ordering that $\Omega$ can have, rather than a possible topology.  In the spirit of results that are concerned with non-power factorization of one convergent sequence (see e.g.\ \cite[Corollary~7.11]{BonsallDuncanbook}, \cite[Theorems~17.4 and~17.5]{doran}, \cite[Corollary~5.2.3.c and Corollary~5.2.4]{Palmer_I}, and \cite[proof of Lemma~1]{Rieffel1969}), we have the following.

\begin{theorem}\label{thm:simultaneous_power_factorization_for_convergent_nets}
Let $\ba$ be a Banach algebra that has a bounded left approximate identity $\ai$ of bound $M\geq 1$, let $\bs$ be a Banach space, and let $\rba$ be a continuous representation of $\ba$ on $\bs$.  Let $S$ be a non-empty subset of $\bs$ such that $\lim_\lambda \rba(\e)s=s$ uniformly on $S$, and suppose that $S$ is bounded or that $\ai$ is commutative.

Let $\Omega$ be a directed set, and let $\bs^\prime$ be the  Banach space of all bounded convergent nets $f:\Omega\to \bs$, supplied with the supremum norm, or, alternatively, let $\bs^\prime$ be the Banach space of all  bounded nets $f:\Omega\to \bs$ that converge to zero, supplied with the supremum norm; in the latter case, we assume that $0\in\overline S$.  Let $\rba^\prime$ be the natural continuous representation of $\ba$ on $\bs^\prime$ by pointwise operations, and let $S^\prime$ be the set of all elements $\{f_\omega\}_{\omega\in\Omega}$ of $X^\prime$ such that $f_\omega\in S$ for all $\omega\in\Omega$.

Choose a unital superalgebra $\uniba$ of $\ba$ such that $\rba^{\prime}$ extends to a continuous unital representation, again denoted by $\rba^\prime$, of $\uniba$ on $\bs^\prime$.

With this choice of $\uniba$ in Theorem~\ref{thm:simultaneous_power_factorization}, for every $\epsilon$, $\delta$, $r$, $n_0$, and sequence $\{\alpha_n\}_{n=1}^\infty$ as in that theorem, there exists a simultaneous power factorization as in that theorem for the continuous representation  $\rba^\prime$ of $\ba$ on the Banach space $\bs^\prime$ with non-empty subset $S^\prime$ of $\bs^\prime$.

\end{theorem}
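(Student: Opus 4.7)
The approach is a direct application of Theorem~\ref{thm:simultaneous_power_factorization_for_maps}, taking the auxiliary Banach space there to be $\bs^\prime$ and the subset there to be $S^\prime$. Almost everything needed is already built into the set-up: the hypotheses on $\ba$, $\ai$, $\bs$, $\rba$ and $S$ are exactly those required, $S^\prime$ is defined in the same way in both theorems, and the existence of the unital superalgebra $\uniba$ to which $\rba^\prime$ extends is a hypothesis of the present theorem. All that remains is to verify that the specific $\bs^\prime$ at hand fits the framework of Theorem~\ref{thm:simultaneous_power_factorization_for_maps}, namely that it is a Banach space of bounded $\bs$-valued maps on $\Omega$ under the supremum norm, that it is invariant under the natural pointwise action of $\ba$, and that the induced representation $\rba^\prime$ is continuous.

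For completeness of $\bs^\prime$, I would take a Cauchy sequence $\{f^{(k)}\}_{k=1}^\infty$ in $\bs^\prime$; each coordinate sequence $\{f^{(k)}_\omega\}_{k=1}^\infty$ is Cauchy in $\bs$, so the pointwise limit $f_\omega$ exists, and uniform convergence over $\Omega$ makes $f$ bounded. In the bounded convergent case, the limits $x^{(k)}:=\lim_\omega f^{(k)}_\omega$ form a Cauchy sequence in $\bs$ because $\norm{x^{(k)}-x^{(l)}}\leq\norm{f^{(k)}-f^{(l)}}_\infty$, so they converge to some $x\in\bs$, and a standard three-epsilon argument shows $f_\omega\to x$. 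The null-convergent case is analogous with $x^{(k)}=0$ throughout. Invariance of $\bs^\prime$ under the pointwise $\ba$-action follows immediately from the continuity of each $\rba(a)$: if $f_\omega\to x$ then $\rba(a)f_\omega\to\rba(a)x$, and if $f_\omega\to 0$ then $\rba(a)f_\omega\to 0$. Boundedness is preserved via $\norm{\rba(a)f_\omega}\leq\norm{\rba}\norm{a}\norm{f}_\infty$, and this same estimate yields $\norm{\rba^\prime}\leq\norm{\rba}$, so $\rba^\prime$ is a continuous representation.

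With these preliminaries in place, an invocation of Theorem~\ref{thm:simultaneous_power_factorization_for_maps} with the chosen $\uniba$ produces the desired simultaneous power factorization of $S^\prime$ inside $\bs^\prime$. There is no serious obstacle: the argument is essentially bookkeeping resting on the standard fact that the spaces of bounded convergent, respectively null-convergent, nets are closed subspaces of the space of all bounded $\bs$-valued maps on $\Omega$. The only conceptual point worth flagging is that the freedom in the choice of $\uniba$ in Theorem~\ref{thm:simultaneous_power_factorization_for_maps} need only be tailored to the extended representation $\rba^\prime$ rather than to $\rba$ itself, which is precisely what our hypothesis supplies; the unitization of $\ba$ is always a safe default.
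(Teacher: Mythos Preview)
Your proposal is correct and follows exactly the approach the paper takes: Theorem~\ref{thm:simultaneous_power_factorization_for_convergent_nets} is stated in the paper as an immediate special case of Theorem~\ref{thm:simultaneous_power_factorization_for_maps}, with no separate proof given. You actually supply more detail than the paper does, spelling out the completeness of $\bs^\prime$ and the invariance under the pointwise action, which the paper leaves entirely to the reader; the one small point neither you nor the paper makes explicit is the non-emptiness of $S^\prime$ (trivial in the convergent case via constant nets, and in the null case this is where the hypothesis $0\in\overline{S}$ enters).
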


As above, one can always choose $\uniba$ to be the unitization of $\ba$.

Again, we refrain from translating all statements in Theorem~\ref{thm:simultaneous_power_factorization} to the context of Theorem~\ref{thm:simultaneous_power_factorization_for_convergent_nets}. One of the consequences is that there exist $a\in\ba$, and, for all $n\geq 1$, a map $x_n:S^\prime\to\bs^\prime$ such that $f_\omega=\rba(a^n)x_n(f)_\omega$ for all $\{f_\omega\}_{\omega\in\Omega}\in S^\prime$ and $\omega\in\Omega$. The point is, of course, that all nets $\{x_n(f)_\omega\}_{\omega\in\Omega}$ for $f\in S^\prime$ are automatically bounded and convergent (or bounded and convergent to zero) again, since they are elements of $\bs^\prime$, 

As a particular case, using Lemma~\ref{lem:totally_bounded_subsets}, we see that there exists a simultaneous pointwise power factorization for all convergent nets $\{f_\omega\}_{\omega\in\Omega}$ in a totally bounded subset $S$ of $\ess{\bs}$. A special case of this, in turn, occurs when $\bs=\ess{\bs}$ and a convergent sequence $\{s_l\}_{l=1}^\infty$ in $\bs$ is given.  One can then take $\Omega=\{\,1,2,\ldots\,\}$ and $S=\{\,s_l : l\geq 1\,\}$. Since $S$ is a totally bounded subset of $\bs$, we see, specializing still further to $n=1$, that there exists $a\in\ba$ such that $s_l=\rba(a)s^\prime_l$ for all $l\geq 1$, and where the convergent sequence $\{s^\prime_l\}_{l=1}^\infty$ in $\bs$ converges to zero if  $\{s_l\}_{l=1}^\infty$ does. Thus the termwise non-power factorization results for sequences in the references prior to Theorem~\ref{thm:simultaneous_power_factorization_for_convergent_nets} are specializations of the theorem.

\medskip

It is obvious how a similar device can be applied to Theorems~\ref{thm:positive_simultaneous_power_factorization_for_subset_of_interior_positive_cone},~ \ref{thm:positive_simultaneous_power_factorization}, and~\ref{thm:positive_simultaneous_power_factorization_for_banach_algebras_of_functions}.  Under the appropriate hypotheses, to be found in these theorems, ordered versions of Theorems~\ref{thm:simultaneous_power_factorization_for_maps} and~\ref{thm:simultaneous_power_factorization_for_convergent_nets} can be established without any further actual proof being necessary. The results thus obtained assert the existence of a positive simultaneous pointwise power factorization (with various extra properties originating from Theorem~\ref{thm:simultaneous_power_factorization}) for sets of bounded maps (including sets of bounded convergent nets, and sets of bounded nets converging to zero) with values in a subset $S$ of the positive cone of an ordered Banach space, where $S$ is such that $\lim_{\lambda}\norm{\e s-s}=0$ uniformly on $S$ for some positive bounded left approximate identity $\ai$ of $\ba$, and where $S$ is bounded or $\ai$ is commutative.  For reasons of space, we refrain from formulating the six ensuing results.

\section{Worked example}\label{sec:worked_example}

In this section, we shall show how the Ansatz in part (6) of Theorem~\ref{thm:simultaneous_power_factorization} can be used in a concrete case to find an explicit positive simultaneous power factorization with all properties as in Theorem~\ref{thm:simultaneous_power_factorization}. One could say that, in this case, Theorem~\ref{thm:simultaneous_power_factorization} is strictly speaking not needed, since the existence of the factorization follows `by inspection'. In practice, however, one might not start any investigations into this direction at all, without the theoretical reassurance that the factorization is actually possible. At first sight, there seem to be (and there are) various technical difficulties to overcome when one wants to find an explicit factorization, and it is not immediately obvious how to do this. If one did not know beforehand that success of the search is guaranteed, one might even consider such success unlikely.

Our example concerns the context of Examples~\ref{ex:first_appearance} and~\ref{ex:second_appearance}, and Remark~\ref{rem:no_bounded_x_n}, where the (real or complex) ordered Banach algebra $\oba$ is $\conto{\mathbb R}$, and where $\rba$ is the positive continuous left regular representation of $\ba$. As unital ordered Banach superalgebra of $\ba$ we choose $B=\contb{\RR}$; its  identity element is the constant function $\onefunction$. Clearly, $\rba$ extends to a positive continuous unital representations of $\contb{\RR}$ on $\conto{\RR}$, defined by pointwise multiplication again. We shall omit the symbol $\rba$ from now on.

As in Example~\ref{ex:first_appearance}, we choose, for every integer $\nu\geq 1$, a function $e_\nu\in\conto{\RR}$ that takes values in $[0,1]$, equals 1 on $[-\nu,\nu]$, and equals 0 on $(-\infty,-\nu-1]\cup[\nu+1,\infty)$. Then $\{e_\nu\}_{\nu=1}^\infty$ is  a positive 1-bounded approximate identity for $\ba$ that is clearly commutative.

As in Example~\ref{ex:first_appearance}, we choose $f_0\in\conto{\mathbb R}$ such that $f_0(t)\geq 0$ for all $t\in\RR$ and such that $\norm{f_0}>1$, and we let
\[
S=\{\,f\in\pos{\conto{\mathbb R}} : f(t)\leq f_0(t)\textup{ for all }t\in\mathbb R\textup{ such that }f_0(t)\leq 1\,\}.
\]
As already noted in Example~\ref{ex:first_appearance},  $S$ is non-empty and unbounded, because it contains functions of arbitrarily large norm that have compact support in the non-empty open set $\{t\in\RR : f_0(t)>1\,\}$.  It was shown in Example~\ref{ex:first_appearance} that $\lim_{\nu\to\infty}\norm{e_\nu f-f}=0$ uniformly for $f\in S$. Clearly, $S\subset\pos{\conto{\RR}}$.

After these preliminary remarks and recollections, we see that Theorem~\ref{thm:positive_simultaneous_power_factorization_for_banach_algebras_of_functions} applies in this context. Hence a positive simultaneous power factorization for $S$ exists, with all additional properties as in Theorem~\ref{thm:simultaneous_power_factorization}. Even for this simple example, this is a non-trivial statement. Disregarding everything else in Theorem~\ref{thm:simultaneous_power_factorization}, it is, in fact, not even immediately clear that a power factorization for $f_0$ alone exists, even though we know, of course, already much longer from \cite[Theorem~1]{Allan1976} that this \emph{is} possible. Indeed, if $f_0$ has no zeros, and if $a\in\conto{\RR}$ and $x_n(f_0)\in\conto{\RR}$ for $n\geq 1$ are such that $f_0=a^n x_n(f_0)$, then $a$ cannot have any zeros either, and we must have that $x_n(f_0)=a^{-n}f_0$ for all $n\geq 1$. However, since $a$ vanishes at infinity, $a^{-1}(t)$ diverges to infinity as $|t|\to\infty$. Since the rate of this divergence increases with $n$, it is not entirely obvious how one can guarantee that $a^{-n}f_0$ still vanishes at infinity for arbitrarily large $n$.

Fortunately, as noted in Remark~\ref{rem:Ansatz}, we know that $a$, and, in fact, the whole positive simultaneous power factorization, can be constructed using only the sequence $\{e_\nu\}_{\nu=1}^\infty$.
More precisely, if we fix $r$ such that $0<r<1/(1+1)=1/2$, then, according to the parts (6a) and (6c) of Theorem~\ref{thm:simultaneous_power_factorization}, there exists a simultaneous power factorization as in Theorem~\ref{thm:simultaneous_power_factorization}, where $a$ is of the form
\begin{equation}\label{eq:Ansatz_for_a}
a=\sum_{i=1}^\infty r(1-r)^{i-1}e_{\nu_i}
\end{equation}
for a strictly increasing sequence $\{\nu_i\}_{i=1}^\infty$. Furthermore, the maps $x_n$ for $n\geq 1$ can be chosen to be as in part (6b) of Theorem~\ref{thm:simultaneous_power_factorization}.

All in all, we merely need to find a suitable strictly increasing sequence $\{\nu_i\}_{i=1}^\infty$, and we shall now embark on doing so. Our approach is to work with the Ansatz for $a$ as in \eqref{eq:Ansatz_for_a}, and then go through the assertions in Theorem~\ref{thm:simultaneous_power_factorization} one by one, each time requiring that it be satisfied with our choice of $\{\nu_i\}_{i=1}^\infty$. As we shall see, this will result in three conditions, all three of the form that each $\nu_i$ be larger than $N(i)$, where $\{N(i)\}_{i=1}^\infty$ is a strictly increasing sequence of strictly positive integers. If these three conditions are all met, then the simultaneous power factorization exists with the corresponding $a$ as in \eqref{eq:Ansatz_for_a}, and with all properties as in Theorem~\ref{thm:simultaneous_power_factorization}. Since it is clearly possible to meet these three sufficient conditions simultaneously, this will give a demonstration `by hand' of the existence of a positive simultaneous power factorization as in Theorem~\ref{thm:positive_simultaneous_power_factorization_for_banach_algebras_of_functions}. Moreover, the lower bounds $N(i)$ will be defined explicitly in terms of the given function $f_0$. In principle, this enables one to determine a completely explicit positive simultaneous power factorization for any concretely given $f_0$.

We start by fixing $r$. Theorem~\ref{thm:simultaneous_power_factorization} guarantees success if $0<r<1/2$, but as long as the statement in part (6b) of Theorem~\ref{thm:simultaneous_power_factorization} on the $b_k^{-1}$ being in a certain Banach subalgebra of $\contb{\RR}$ is not required to hold, the whole construction will actually work if $0<r<1$. We shall therefore fix $0<r<1$ for the time being, and assume that $0<r<1/2$ only when this particular statement in part (6b) is considered at the end of this example.

We need some preparations.

The graph of the strictly positive element $a$ of $\conto{\RR}$ as in \eqref{eq:Ansatz_for_a} resembles a two-dimensional step pyramid that is infinitely wide and that has countably many eroded steps at height $(1-r)^i$ for $i\geq 0$. More precisely, it follows from an easy pointwise calculation that
\begin{align}
a(t)&=1&&\textup{ if }|t|\leq \nu_1,\label{eq:a_value_1}\\
a(t)&=(1-r)^{i-1}(1-r+re_{\nu_i}(t))&&\textup{ if }i\geq 1 \textup{ and }|t|\in[\nu_{i},\nu_{i}+1],\label{eq:a_value_transition}\\
\intertext{and}
a(t)&=(1-r)^i&&\textup{ if }i\geq 1 \textup{ and }|t|\in[\nu_i+1,\nu_{i+1}]\label{eq:a_value_interval}.
\intertext{Since $(1-r)^{i-1}(1-r+re_{\nu_i}(t))\geq (1-r)^{i-1}(1-r)=(1-r)^i$ for all $t\in\RR$, we have our basic equality}
a^{-1}(t)&=1&&\textup{ if }|t|\leq \nu_1,\label{eq:a_inverse_equals_one}
\intertext{and basic inequalities}
a^{-1}(t)&\leq (1-r)^{-i}&&\textup{ if }i\geq 1 \textup{ and }|t|\in[\nu_{i},\nu_{i+1}].\label{eq:a_inverse_upper_bound}
\end{align}
When multiplying with $a^{-n}(t)$ for a fixed $n\geq 1$, the troublesome blow-up factor $(1-r)^{-in}$ for $|t|\in[\nu_i+1,\nu_{i+1}]$ becomes progressively worse as $i\to\infty$. Fortunately, it does so at a controlled rate, namely, exponentially in $i$. As we shall see, this allows us to remedy these blow-ups by letting the sequence $\nu_i$ tend to infinity quickly enough (relative to the decay of $f_0$), forcing that these exponential growths for $n=1,2,\ldots$ are all countered by one super-exponential decay in the relevant estimates.

Let us define
\[
x_n(f)=a^{-n}f
\]
for $n\geq 1$ and $f\in S$.

The first thing to be taken care of is to ensure that $x_n(f)\in\conto{\RR}$ for all $n\geq 1$ and $f\in S$. For this, we select a strictly increasing sequence $\{N_1(i)\}_{i=1}^\infty$ of integers $N_1(i)\geq 1$ such that $0\leq f_0(t)\leq e^{-i^2}$ for all $t$ such that $|t|\geq N_1(i)$. We shall assume in the remainder of this example that $\{\nu_i\}_{k=1}^\infty$ is such that $\nu_i\geq N_1(i)$ for all $i\geq 1$.  We claim that then $a^{-n}f\in\conto{\RR}$ for all $n
\geq 1$ and $f\in S$.  To see this, we fix $n\geq 1$ and $f\in S$. If  $|t|\geq \nu_1$, there exists $i\geq 1$ such that $|t|\in [\nu_i,\nu_{i+1}]$. Then $|t|\geq\nu_i\geq N_1(i)$, so that $0\leq f_0(t)\leq e^{-i^2}<1$. In that case, we also know that $0\leq f(t)\leq f_0(t)$. Hence, for such $t$, we see from \eqref{eq:a_inverse_upper_bound} that
\begin{align*}
|[x_n(f)](t)|&=|a^{-n}(t)f(t)|\\
&\leq (1-r)^{-in}f_0(t)\\
&\leq (1-r)^{-in} e^{-i^2}.
\end{align*}
Since
\begin{equation}\label{eq:gaussian_limit_is_zero}
\lim_{i\to\infty}(1-r)^{-in} e^{-i^2}=0,
\end{equation}
this implies that $x_n(f)\in\conto{\RR}$, as desired.

It is clear that $a\in\pos{\conto{\RR}}$ and that $x_n(f)\in\pos{\conto{\RR}}$ for all $n\geq 1$ and $f\in S$.

We shall now start investigating which further conditions on the sequence $\{\nu_i\}_{i=1}^\infty$ are sufficient for our construction to satisfy the parts (1) through (6) of Theorem~\ref{thm:simultaneous_power_factorization}.

Part (1) of Theorem~\ref{thm:simultaneous_power_factorization} is obviously satisfied, so that we do have a simultaneous power factorization. As already observed, this factorization is clearly positive.

Part (2) of Theorem~\ref{thm:simultaneous_power_factorization} is true for all choices of $\{\nu_i\}_{i=1}^\infty$.

Turning to part (3) of Theorem~\ref{thm:simultaneous_power_factorization}, we claim that the maps $x_n: S\to\conto{\RR}$ are uniformly continuous on $S$ for all $n\geq 1$. To see this, fix $n\geq 1$, and let $\eta>0$ be given. Fix $i_0\geq 1$ such that $2(1-r)^{-in}e^{-i^2}<\eta$ for all $i\geq i_0$.  Now consider $f_1,f_2\in S$. If $|t|\leq\nu_1$, then
\begin{align}\label{eq:uniform_continuity_one}
\begin{split}
|[x_n(f_1)-x_n(f_2)](t)|&=|a^{-n}(t)[f_1(t)-f_2(t)]|\\
&=|f_1(t)-f_2(t)|\\
&\leq\norm{f_1-f_2}.
\end{split}
\end{align}
If $|t|\geq \nu_{i_0}$, then there exists $i\geq i_0$ such that $|t|\in[\nu_i,\nu_{i+1}]$. Then $|t|\geq\nu_i\geq N_1(i)$, so that $0\leq f_0(t)\leq e^{-i^2}<1$. This implies that $0\leq f_1(t), f_2(t)\leq f_0(t)$.  We then have
\begin{align}\label{eq:uniform_continuity_two}
\begin{split}
|[x_n(f_1)-x_n(f_2)](t)|&=|a^{-n}(t)[f_1(t)-f_2(t)]|\\
&\leq(1-r)^{-in}|f_1(t)-f_2(t)|\\
&\leq 2(1-r)^{-in} f_0(t)\\
&\leq 2(1-r)^{-in}e^{-i^2}\\
&<\eta.
\end{split}
\end{align}
For the remaining values of $t$, i.e.\ for $t$ such that $|t|\in(\nu_1,\nu_{i_0})$, we have
\begin{align}\label{eq:uniform_continuity_three}
\begin{split}
|[x_n(f_1)-x_n(f_2)](t)|&=|a^{-n}(t)[f_1(t)-f_2(t)]|\\
& \leq\left(\max_{|t|\in[\nu_1,\nu_{i_0}]}a^{-n}(t)\right)\norm{f_1-f_2}.
\end{split}
\end{align}
It follows from \eqref{eq:uniform_continuity_one}, \eqref{eq:uniform_continuity_two}, and \eqref{eq:uniform_continuity_three} that $\norm{x_n(f_1)-x_n(f_2)}<\eta$ for all $f_1,f_2\in S$ such that $\norm{f_1-f_2}<\eta/\max_{|t|\in[\nu_1,\nu_{i_0}]}a^{-n}(t)$.
This establishes our claim concerning the uniform continuity in part (3) of Theorem~\ref{thm:simultaneous_power_factorization}. Since the inverse of $x_n$, i.e.\ the restriction of multiplication with $a^n$, is the restriction of a continuous map,  $x_n:S\to x_n(S)$ is a homeomorphism for $n\geq 1$.

Turning to part (4a) of Theorem~\ref{thm:simultaneous_power_factorization}, let $\epsilon>0$ and $n_0\geq 1$ be given. We note that \eqref{eq:gaussian_limit_is_zero} implies that there exists $K> 0$ such that
\[
0\leq[(1-r)^{-in}-1]e^{-i^2}\leq K
\]
for all $n=1,\ldots,n_0$ and $i\geq 1$. We now select a strictly increasing sequence $\{N_2(i)\}_{i=1}^\infty$ of integers $N_2(i)\geq 1$ such that $0\leq f_0(t)\leq (\epsilon/K)e^{-i^2}$ for all $t$ such that $|t|\geq N_2(i)$. In addition to our first assumption, we shall assume in the remainder of this example that $\{\nu_i\}_{i=1}^\infty$ is such that $0\leq f_0(t)\leq 1$ whenever $|t|\geq \nu_1$, and also such that $\nu_i\geq N_2(i)$ for all $i\geq 1$. We claim that then $\norm{f-x_n(f)}\leq\epsilon$ for all $n=1,\ldots,n_0$ and $f\in S$.  To see this, we fix $f\in S$ and $n$ such that $1\leq n\leq n_0$.  If $t$ is such that $|t|\leq \nu_1$, then $|a^{-n}(t)f(t)-f(t)|=|f(t)-f(t)|=0$. For other values of $t$, there exists $i\geq 1$ such that $|t|\in[\nu_i,\nu_{i+1}]$. Since then $|t|\geq\nu_i\geq \nu_1$, we have $0\leq f_0(t)\leq 1$, so that $0\leq f(t)\leq f_0(t)$. Using this, and also that $0\leq f_0(t)\leq (\epsilon/K)e^{-i^2}$ since $|t|\geq\nu_i\geq N_2(i)$, we see that
\begin{align*}
||f-x_n(f)](t)|&=|a^{-n}(t)f(t)-f(t)|\\
&=a^{-n}(t)f(t)-f(t)\\
& \leq [(1-r)^{-in}-1]f(t)\\
& \leq [(1-r)^{-in}-1]f_0(t)\\
& \leq [(1-r)^{-in}-1]\frac{\epsilon}{K} e^{-i^2}\\
&\leq\epsilon.
\end{align*}
This establishes our claim concerning part (4a) of Theorem~\ref{thm:simultaneous_power_factorization}.

We now turn to part (4b) of Theorem~\ref{thm:simultaneous_power_factorization}. Let $\{\alpha_n\}_{n=1}^\infty\subset(1,\infty)$ such that $\lim_{n\to\infty} \alpha_n=
\infty$ and $\delta>0$ be given; we may assume that $\delta\leq 1$. If $i\geq 1$ is fixed, then, since $\lim_{n\to\infty} \alpha_n=\infty$, we can choose  an integer $N^\prime(i)\geq 1$ such that $0\leq(1-r)^{-i}\leq\alpha_n$ for all $n\geq N^\prime(i)$. We now select a strictly increasing sequence $\{N_3(i)\}_{i=1}^\infty$ of integers $N_3(i)\geq 1$ such $(1-r)^{-in}f_0(t)\leq\alpha_n^n\delta$ for all $n=1,\ldots,N^\prime(i)-1$ and all $t$ such that $|t|\geq N_3(i)$. In addition to our first and second assumption, we shall assume in the remainder of this example that $\{\nu_i\}_{i=1}^\infty$ is such that $0\leq f_0(t)\leq \delta$ for all $t$ such that $|t|\geq \nu_1$, and also such that $\nu_i\geq N_3(i)$ for all $i\geq 1$.
We claim that then $\norm{x_n(f)}\leq\alpha_n^n\max(\norm{f},\delta)$ for all $n\geq 1$ and $f\in S$. To see this, we fix $n\geq 1$ and $f\in S$. If $|t|\leq\nu_1$, then $a(t)=1$, which implies that
\begin{align*}
|[x_n(f)](t)|&=|a^{-n}(t)f(t)|\\
&=f(t)\\
&\leq\norm{f}\\
&\leq\max(\norm{f},\delta)\\
&<\alpha_n^n\max(\norm{f},\delta).
\end{align*}
If $|t|>\nu_1$, then there exists $i\geq 1$ such that $|t|\in [\nu_i,\nu_{i+1}]$. Then $|t|\geq\nu_i\geq\nu_1$, so that $0\leq f_0(t)|\leq\delta\leq 1$, implying that $0\leq f(t)\leq f_0(t)\leq\delta$. We now distinguish between two cases how our fixed $n$ can be related to $N^\prime(i)$ for this particular $i$. If $n=1,\ldots,N^\prime(i)-1$, then
\begin{align*}
|[x_n(f)](t)|&=|a^{-n}(t)f(t)|\\
&\leq (1-r)^{-in} f(t)\\
&\leq (1-r)^{-in}f_0(t)\\
&\leq \alpha_n^n\delta\\
&\leq\alpha_n^n\max(\norm{f},\delta),
\end{align*}
where we have used that $|t|\geq\nu_i\geq N_3(i)$ in the fourth step. If $n\geq N^\prime(i)$, then
\begin{align*}
|[x_n(f)](t)|&=|a^{-n}(t)f(t)|\\
&\leq (1-r)^{-in}f(t)\\
&\leq \alpha_n^n f(t)\\
&\leq\alpha_n^{n}\delta\\
&\leq\alpha_n^n \max(\norm{f},\delta).
\end{align*}
Our claim concerning part (4b) of Theorem~\ref{thm:simultaneous_power_factorization} has now been established.

It is obvious that the `linearity' of the maps $x_n$ for $n\geq 1$ in part (5) of Theorem~\ref{thm:simultaneous_power_factorization} holds.

We shall consider part (6) in a moment, but we treat part (7) of Theorem~\ref{thm:simultaneous_power_factorization} first. The parts (7a) and (7b) are not applicable. Part (7c) is easily verified, once one notes that $S$ is invariant under multiplication by $e_\nu$ for all $\nu\geq 1$. Fix $n\geq 1$. Using our observation in the final step, one can then write, for $f\in S$ and $\nu\geq 1$,
\[
\norm{e_\nu x_n(f)- x_n(f)}=\norm{e_\nu a^{-n}f\!-\!a^{-n}f}=\norm{a^{-n}e_\nu f - a^{-n}f}=\norm{x_n(e_\nu f) -x_n(f)}.
\]
Since  $\lim_{\nu\to\infty}e_\nu f = f$ uniformly on $S$, and since we have also already established that $x_n$ is uniformly continuous on $S$, we now see that $\lim_{\nu\to\infty} e_\nu x_n(f)=x_n(f)$ uniformly on $S$.

Finally, we turn to part (6) of Theorem~\ref{thm:simultaneous_power_factorization}. The parts (6a) and (6c) were built into our construction from the very start, and we are left with part (6b). For $k\geq 1$, let the element $b_k\in\contb{\RR}$ be defined by
\begin{equation}\label{eq:b_k_definition}
b_k=(1-r)^{k}\onefunction + \sum_{i=1}^{k}r(1-r)^{i-1}e_{\nu_i}.
\end{equation}
The $b_k$ agree with $a$ on ever larger intervals. More precisely, an easy pointwise calculation and comparison of the results with \eqref{eq:a_value_1}, \eqref{eq:a_value_transition}, and \eqref{eq:a_value_interval}  yield that, for $k\geq 1$,

\begin{align}
b_k(t)&=a(t)&&\textup{ if }|t|\leq \nu_{k+1},\label{eq:b_k_equals_a}
\intertext{and}
b_k(t)&=(1-r)^k&&\textup{ if }|t|\geq\nu_{k+1}.\label{eq:b_k_remaining_part}
\end{align}
The $b_k$ are clearly invertible in $\contb{\RR}$. We claim that $x_n(f)=\lim_{k\to\infty}b_k^{-n}f$ for all $n\geq 1$ and $f\in S$. To see this, fix $n\geq 1$ and $f\in S$. We see from \eqref{eq:a_value_transition}, \eqref{eq:a_value_interval}, \eqref{eq:b_k_equals_a}, and \eqref{eq:b_k_remaining_part} that $a(t)=b_k(t)$ if $|t|\leq\nu_{k+1}$, and that $0\leq a(t)\leq b_k(t)$ if $|t|\geq\nu_{k+1}$. Therefore,
\begin{align}\label{eq:powers_of_a_and_b_k_pointwise_comparison}
\begin{split}
\norm{x_n(f)-b_k^{-n}f}&=\sup_{t\in\RR}|a^{-n}(t)f(t)-b_k^{-n}(t)f(t)|\\
&=\sup_{|t|\geq\nu_{k+1}}|a^{-n}(t)f(t)-b_k^{-n}(t)f(t)|\\
&\leq 2 \sup_{|t|\geq\nu_{k+1}}a^{-n}(t)f(t).
\end{split}
\end{align}
Since we have already established that $a^{-n}f\in\conto{\RR}$ under our first assumed condition on the sequence $\{\nu_i\}_{i=1}^\infty$, it follows from \eqref{eq:powers_of_a_and_b_k_pointwise_comparison} that $\lim_{k\to\infty}\norm{x_n(f)-b_k^{-n}f}=0$. This establishes our claim on pointwise convergence.

Our final task is now to  establish that every $b_k^{-1}$ is an element of the unital Banach subalgebra of $\contb{\RR}$ that is generated by $\onefunction$ and $e_{\nu_1},\ldots,e_{\nu_k}$. A natural first attempt is to use \eqref{eq:b_k_definition}, and investigate whether
\[
\left\Vert(1-r)^k\onefunction - b_k\right\Vert<\norm{[(1-r)^k\onefunction]^{-1}}^{-1}.
\]
This would imply what we want, but unfortunately this is equivalent to showing that
\[
1-(1-r)^k<(1-r)^k,
\]
which, for every $r$ such that $0<r<1$, is false if $k$ is large enough. So we have to proceed differently.

To this end, we note that, not dissimilar from \eqref{eq:maximal_element_factorization},
\begin{equation}\label{eq:b_k_product}
b_k=\prod_{i=1}^k{(1-r+r e_{\nu_i})}.
\end{equation}
Indeed, once one notices that $e_{\nu_i}e_{\nu_{i^\prime}}=e_{\nu_i}$ if $i^\prime\geq i$, it easily established by induction (also similar to the induction that one uses to establish \eqref{eq:maximal_element_factorization}) that the right hand side of \eqref{eq:b_k_product} equals the right hand side of \eqref{eq:b_k_definition}.

It is the factorization of $b_k$ in \eqref{eq:b_k_product} that enables us to show that every $b_k^{-1}$ is in the Banach subalgebra as described, provided that we assume that $0<r<1/2$. Whereas the assumption that $0<r<1$ was sufficient so far, it is here, at the very end of this example, that we finally want to restrict $r$ to the interval that is specified in Theorem~\ref{thm:simultaneous_power_factorization}. The reason is simply that, as in the proofs leading to Theorem~\ref{thm:simultaneous_power_factorization}, we want to be able to write down a Neumann series for the inverse of each factor $(1-r+r e_{\nu_i})$ in \eqref{eq:b_k_product}. This will clearly imply that $b_k^{-1}$ is in the Banach subalgebra as described. We now merely need to note that this can be done as
\[
(1-r+r e_{\nu_i})^{-1}=\frac{1}{1-r}\sum_{j=0}^\infty \left(\frac{r}{r-1}e_{\nu_i}\right)^j,
\]
provided that $\norm{\frac{r}{r-1}e_{\nu_i}}=\frac{r}{1-r}<1$, i.e. provided that $r<1/2$.

This concludes our example of the explicit construction of a positive simultaneous power factorization that has all properties as in Theorem~\ref{thm:simultaneous_power_factorization}.

\begin{remark}\quad
\begin{enumerate}
\item The restriction of the above factorization to the subset $\{\,f\in\conto{\RR} : 0\leq f(t)\leq f_0(t)\,\}$ of $S$ is an explicit positive simultaneous power factorization for the order interval $[0,f_0]$ in $\conto{\RR}$. A similar construction will give an explicit simultaneous power factorization for all order intervals $[f_1,f_2]$ in $\conto{\RR}$.
\item It seems likely that it is possible to generalize the above example to $\conto{\ts}$ for an arbitrary locally compact Hausdorff space $\ts$, possibly requiring a little extra technique. We leave it to the diligent reader to undertake such an endeavour.
\end{enumerate}
\end{remark}

\subsection*{Acknowledgements} The authors thank Garth Dales and Miek Messerschmidt for helpful discussions. During this research, the second author was supported by a China Scholarship Council grant.

\bibliography{pmybibfile}

\end{document}